\setlist[itemize,1]{label={--\,}}
\setlist[enumerate,1]{label=(\roman*)}
\DeclareRobustCommand{\gobblefour}[4]{}
\renewcommand{\tocsection}[3]{%
  \indentlabel{\@ifnotempty{#2}{\ignorespaces#1 \makebox[\widthof{00.}][l]{#2.}\quad}}#3}
\renewcommand{\tocsubsection}[3]{%
  \indentlabel{\@ifnotempty{#2}{\ignorespaces#1 \makebox[\widthof{00.0.}][l]{#2.}\quad}}#3}
\setlist[itemize]{leftmargin=*}
\setlist[enumerate]{leftmargin=*}
\newtheorem{thm}{Theorem}[section]
\newtheorem{defin}[thm]{Definition}
\newtheorem{rem}[thm]{Remark}
\newtheorem{prop}[thm]{Proposition}
\newtheorem{cor}[thm]{Corollary}
\newtheorem{lemma}[thm]{Lemma}
\newenvironment{manthm}[1]{%
	\manualtheoreminner
}{\endmanualtheoreminner}
\newcommand{\1}{\mathbbm{1}}
\newcommand{\A}{\mathbb A}
\newcommand{\C}{\mathbb C}
\newcommand{\G}{\mathbb G}
\newcommand{\N}{\mathbb N}
\newcommand{\Q}{\mathbb Q}
\newcommand{\Z}{\mathbb Z}
\newcommand{\ab}{\mathrm{ab}}
\newcommand{\alg}{\mathrm{alg}}
\newcommand{\BPKE}{{B_{\vert K}^{\otimes E}}}
\newcommand{\GBPKE}{{G\text{-}B_{\vert K}^{\otimes E}}}
\newcommand{\BPKEPairs}{{\mathcal{B}_{\vert K}^{\otimes E}}}
\newcommand{\BPKF}{{B_{\vert K}^{\otimes F}}}
\newcommand{\BPQE}{{B_{\vert\Q_p}^{\otimes E}}}
\newcommand{\BPKKEE}{{B_{\vert K_1}^{\otimes E_1}}}
\newcommand{\cat}{\mathrm{cat}}
\newcommand{\cHom}{\mathcal{H}om}
\newcommand{\ci}{\mathrm{ci}}
\newcommand{\cont}{\mathrm{cont}}
\newcommand{\cris}{\mathrm{cris}}
\newcommand{\crys}{\mathrm{cris}}
\newcommand{\cyc}{\mathrm{cyc}}
\newcommand{\dR}{{\mathrm{dR}}}
\newcommand{\End}{\mathrm{End}}
\newcommand{\Fil}{\mathrm{Fil}}
\DeclareMathOperator{\Frac}{Frac}
\newcommand{\Gal}{{\mathrm{Gal}}}
\newcommand{\GL}{\mathrm{GL}}
\newcommand{\GSp}{\mathrm{GSp}}
\newcommand{\Hom}{\mathrm{Hom}}
\newcommand{\HT}{\mathrm{HT}}
\newcommand{\id}{\mathrm{id}}
\newcommand{\Id}{\mathrm{Id}}
\newcommand{\Ind}{\mathrm{Ind}}
\newcommand{\length}{\mathrm{length}}
\newcommand{\Nm}{\mathrm{Nm}}
\newcommand{\Ob}{\mathrm{Ob}\,}
\newcommand{\PGL}{\mathrm{PGL}}
\newcommand{\ptri}{{\mathrm{ptri}}}
\newcommand{\red}{\mathrm{red}}
\newcommand{\reg}{\mathrm{reg}\,}
\newcommand{\qreg}{\mathrm{qreg}}
\newcommand{\Rep}{\mathrm{Rep}}
\DeclareMathOperator{\res}{res}
\newcommand{\rig}{\mathrm{rig}}
\DeclareMathOperator{\rk}{rk}
\newcommand{\scn}{{\mathrm{scn}}}
\newcommand{\SL}{\mathrm{SL}}
\newcommand{\sms}{{\mathrm{ss}}}
\newcommand{\st}{{\mathrm{st}}}
\newcommand{\Sym}{{\mathrm{Sym}}}
\newcommand{\stri}{{\mathrm{stri}}}
\newcommand{\pstri}{{\mathrm{pstri}}}
\newcommand{\tri}{{\mathrm{tri}}}
\newcommand{\Vect}{\mathrm{Vect}}
\newcommand{\xto}{\xrightarrow}
\newcommand{\into}{\hookrightarrow}
\newcommand{\onto}{\twoheadrightarrow}
\newcommand{\xonto}{\xtwoheadrightarrow}
\newcommand{\ovl}{\overline}
\newcommand{\wtl}{\widetilde}
\newcommand{\ccirc}{\kern0.5ex\vcenter{\hbox{$\scriptstyle\circ$}}\kern0.5ex}
\newcommand{\second}{{\prime\prime}}
\newcommand{\calA}{\mathcal{A}}
\newcommand{\calB}{\mathcal{B}}
\newcommand{\cC}{\mathcal{C}}
\newcommand{\calC}{\mathcal{C}}
\newcommand{\cD}{\mathcal{D}}
\newcommand{\calD}{\mathcal{D}}
\newcommand{\calE}{\mathcal{E}}
\newcommand{\cG}{\mathcal{G}}
\newcommand{\calP}{\mathcal{P}}
\newcommand{\calS}{\mathcal{S}}
\newcommand{\cS}{\mathscr{S}}
\newcommand{\cW}{\mathcal{W}}
\newcommand{\calH}{\mathcal{H}}
\newcommand{\bB}{{\mathbf{B}}}
\newcommand{\bD}{{\mathbf{D}}}
\newcommand{\bG}{{\mathbf{G}}}
\newcommand{\bP}{{\mathbf{P}}}
\newcommand{\bS}{{\mathbf{S}}}
\newcommand{\udelta}{{\underline{\delta}}}
\newcommand{\ueta}{{\underline{\eta}}}
\newcommand{\unu}{{\underline{\nu}}}
\newcommand{\ui}{{\underline{i}}}
\newcommand{\uu}{{\underline{u}}}
\newcommand{\uv}{{\underline{v}}}
\newcommand{\Qp}{\overline{\Q}_p}
\newcommand{\fm}{{\mathfrak{m}}}
\title{Lifting trianguline Galois representations \\ along isogenies}
\author{Andrea Conti}
\begin{document}

\begin{abstract}
	Given a central isogeny $\pi\colon G\to H$ of connected reductive $\Qp$-groups, and a local Galois representation $\rho$ valued in $H(\Qp)$ that is trianguline in the sense of Daruvar, we study whether a lift of $\rho$ along $\pi$ is still trianguline. We give a positive answer under weak conditions on the Hodge--Tate--Sen weights of $\rho$, and the assumption that the trianguline parameter of $\rho$ can be lifted along $\pi$. This is an analogue of the results proved by Wintenberger, Conrad, Patrikis, and Hoang Duc for $p$-adic Hodge-theoretic properties of $\rho$. We describe a Tannakian framework for all such lifting problems, and we reinterpret the existence of a lift with prescribed local properties in terms of the simple connectedness of a certain pro-semisimple group. While applying this formalism to the case of trianguline representations, we extend a result of Berger and Di Matteo on triangulable tensor products of $B$-pairs.
\end{abstract}

\maketitle

\setcounter{tocdepth}{1}

\section*{Introduction}

Fix a prime $p$ and a number field $F$. According to the Langlands conjectures, algebraic automorphic representations of the adelic points of a connected reductive $F$-group $G$ should provide us with a large class of representations of the absolute Galois group $\Gal(\ovl F/F)$, valued in the $p$-adic points of the Langlands dual of $G$. 
The Fontaine--Mazur conjecture and its generalizations predict, roughly, that such representations are those that are almost everywhere unramified and potentially semistable at the $p$-adic places. In the case of the group $\GL_{2/\Q}$ this is a theorem of Kisin and Emerton, building on the work of many people.


The following notation is in place throughout the introduction. Let $G$ and $G^\prime$ be two connected reductive groups over $F$ and let $H^\prime=({}^LG^\prime)^\circ$ and $H=({}^LG)^\circ$ be the neutral connected components of their Langlands duals, that we see as groups over $\Qp$. 
Given a morphism $S\colon H^\prime\to H$, one can compose a representation $\wtl\rho\colon\Gal(\ovl F/F)\to H^{\prime}(\Qp)$ with $S$ to obtain a representation $\rho\colon\Gal(\ovl F/F)\to H(\Qp)$. 
When $\wtl\rho$ is of automorphic origin, the Langlands functoriality conjectures predict the existence of a transfer of automorphic representations of $G^\prime(\A_F)$ to automorphic representations of $G(\A_F)$. The characterization of Galois representations via $p$-adic Hodge theory is compatible with such a transfer: if $\wtl\rho$ is potentially semistable at the $p$-adic places, then the same is true for $\rho=S\ccirc\wtl\rho$. One can ask whether the converse is true; admitting that the characterization suggested by the Fontaine--Mazur conjecture holds, this would amount to asking whether $\wtl\rho$ is of automorphic origin whenever $\rho$ is. 
In this direction one has the following result of Wintenberger and Conrad. Let $K$ and $E$ be two finite extensions of $\Q_p$ and let $S\colon H^\prime\to H$ be an isogeny of connected reductive $\Qp$-groups. Let $I_K$ be the inertia subgroup of $\Gal(\ovl K/K)$ and $\rho\colon I_K\to H(\Qp)$ a semistable representation, meaning that it is semistable, in the usual sense, after composition with a faithful (hence with any) representation of $H$. 
By a \emph{lift} of $\rho$ to $H^\prime$ we mean a representation $\wtl\rho\colon I_K\to H^\prime(E)$ that satisfies $S\ccirc\wtl\rho\cong\rho$.

\begin{manthm}{A}[{\cite[Théorème 1.1.3]{winteniso},\cite[Theorem 6.2]{conradlift}}]\label{wintint}\mbox{ }\\
Assume that the Hodge--Tate cocharacter $\G_{m,\C_p}\to H_{\C_p}$ attached to $\rho$ can be lifted along $S$ to a cocharacter $\G_{m,\C_p}\to H^\prime_{\C_p}$. 
Then $\rho$ admits a crystalline lift $I_K\to H^\prime(\Qp)$. 
	\end{manthm}

Given the Tannakian nature of the definition of crystalline representation, it is not surprising that the proof of Theorem \ref{wintint} involves Tannakian arguments. However, one cannot deduce the statement in a purely abstract way, and concrete manipulation of filtered $\varphi$-modules is essential to the proof. 

In the same spirit of Theorem \ref{wintint}, we have the more recent results of Hoang Duc \cite{hoangth} and Conrad \cite{conradlift}, where $I"K$ is replaced by either a local or a global Galois group and the possible ramification of a lift is studied in more detail. In \cite{nootlift}, Noot studies the analogue lifting problem for a compatible system of Galois representations attached to an abelian variety. The work of Di Matteo \cite{dimatadm} can also be interpreted in the above setting: he shows that if instead of an isogeny $H^\prime\to H$ one considers a (non one-dimensional) representation $S\colon\GL_m\to\GL_n$, described by a Schur functor, then any lift along $S$ of a Hodge--Tate (de Rham, semi-stable, crystalline) representation into $\GL_n(\Qp)$ is Hodge--Tate (de Rham, semi-stable, crystalline) up to a twist. 

It is known that, for many choices of a connected reductive groups $G$ over a number field $F$, algebraic automorphic representations of finite slope of $G(\A_F)$ live in $p$-adic families: these are rigid analytic (or adic) spaces equipped with global functions that specialize on a Zariski-dense set to the Hecke eigensystems of automorphic representations of $G(\A_F)$. 
By specializing such functions at an arbitrary point of a $p$-adic family one almost never obtains the Hecke eigensystem of an automorphic representation of $G(\A_F)$. However, one can often interpret such a specialization as the Hecke eigensystem of a $p$-adic automorphic form for $G$, and attach to it an $H(\Qp)$-valued Galois representation that will not be potentially semi-stable at the $p$-adic places. The correct notion describing the local behavior at $p$ of representations arising this way seems to be that of triangulinity, introduced by Colmez and inspired by earlier work of Kisin: if $K$ is a $p$-adic field, a continuous representation 
\[ \Gal(\ovl K/K)\to\GL_n(\Qp) \] 
is \emph{trianguline} if the corresponding $(\varphi,\Gamma)$-module, or equivalently $B$-pair, can be obtained by successive extensions of $(\varphi,\Gamma)$-modules, or $B$-pairs, of rank 1. The ordered list of 1-dimensional subquotients appearing in a triangulation is called its \emph{parameter}, and we say that the triangulation of a $B$-pair is \emph{strict} if it is the only one with a given parameter. The definition of triangulinity for a representation $\Gal(\ovl K/K)\to H(\Qp)$, with $H$ not equal to $\GL_n$, is more subtle and has been the object of V. Daruvar's recent Ph.D. thesis \cite{daruvarth}.

Roughly speaking, one conjectures that representations $\Gal(\ovl F/F)\to H(\Qp)$ that are almost everywhere unramified and trianguline at the $p$-adic places are attached to a $p$-adic automorphic form for $G$. Such a conjecture has been made precise only for those $G$ for which all of the ingredients are in place, that include $\GL_{2/\Q}$ and the definite unitary groups studied in \cite{brehelsch}, and proved only for $\GL_{2/\Q}$ (Emerton's ``overconvergent Fontaine--Mazur conjecture'' \cite{emerton}).

Our goal for this paper is to show that the trianguline condition is compatible with the $p$-adic Langlands transfer, in other words, to give an analogue of Theorem \ref{wintint} in the context of $p$-adic variation. Our main result has the following form; we point the reader to the main text for the precise statement. Let $E$ be a $p$-adic field, and let $S\colon H^\prime\to H$ be a morphism of connected reductive $E$-groups with finite central kernel. Let 
\[ \rho\colon\Gal(\ovl F/F)\to H(E) \]
be a continuous Galois representation. The quasi-regularity condition appearing in the statement below is a condition on the Hodge--Tate--Sen weights, that is for instance implied by the weights being all distinct for every embedding of the coefficient field into $\Qp$.

\begin{manthm}{B}[Corollary \ref{liftisogcor}]\label{mainint}
Let $E$ be a $p$-adic field and $\rho\colon\Gal(\ovl F/F)\to H(E)$ a continuous representation that is unramified outside of a finite set of places $\Sigma$, and quasi-regular and strictly trianguline at the $p$-adic places of $F$. Assume that:
\begin{enumerate}[label=(\roman*)]
\item for every $v\in\Sigma$, the restriction of $\rho$ at a decomposition group at $v$ admits a lift to $H^\prime(E)$;
\item the ``$H$-parameters'' of the triangulations of $\rho$ at the $p$-adic places can be lifted to ``$H^\prime$-parameters''.
\end{enumerate}
Then $\rho$ admits a lift $\wtl\rho\colon\Gal(\ovl F/F)\to H^\prime(E)$ that is unramified almost everywhere and trianguline at the $p$-adic places of $F$.
\end{manthm}

The intended application of Theorem \ref{mainint} is to the study of congruences between $p$-adic families of different kinds in purely Galoisian terms: if $\calE_{G^\prime}$ and $\calE_G$ are eigenvarieties associated with $G^\prime$ and $G$, respectively, and $S_\calE\colon\calE_{G^\prime}\to\calE_G$ is the rigid analytic map associated with the Langlands transfer along $S$, then one can hope to prove, by identifying $\calE_{G^\prime}$ and $\calE_G$ with spaces of deformations of trianguline Galois representations and applying Theorem \ref{mainint}, that a point of $\calE_G$ belongs to the image of $S_\calE$ if and only if its associated representation $\Gal(\ovl F/F)\to H(E)$ comes from a representation $\Gal(\ovl F/F)\to H^\prime(E)$ via $S$. This plan has been carried out in \cite{contigsp4} in the special case of the symmetric cube transfer from $\GL_2$ to $\GSp_4$.

Under the assumptions of Theorem \ref{mainint}, the existence of an arbitrary lift follows from a result of Conrad \cite{conradlift}, so all of our work is aimed at checking that such a lift is trianguline at the $p$-adic places. Condition (ii) of Theorem \ref{mainint} can be seen as an analogue of the assumption in Theorem \ref{wintint} that the Hodge--Tate cocharacter can be lifted.

We explain in more detail the structure of the paper and the results that lead to the proof of Theorem \ref{mainint}.

In Sections \ref{tpsub} and \ref{pbschur}, we give an abstract Tannakian description of the problem of lifting Galois representations with prescribed local properties along an isogeny. 
Consider a field $E$ of characteristic 0, an $E$-linear, neutral Tannakian category $\calC$ and a full tensor subcategory $\calD$ of $\calC$. The category $\calC$ should be thought of as the ambient category, for instance that of $\Q_p$-linear representations of $\Gal(\ovl F/F)$, while the objects of its subcategory $\calD$ are those that satisfy a condition we are interested in, for instance the representations that possess some desirable local properties. We then build a new category $\ovl\calD$, sitting between $\calC$ and $\calD$, generated under direct sum by all of the objects $V\in\calC$ such that
\[ V\otimes W \in\calD \]
for some $W\in\calC$. 
In the abstract setting, we can study the discrepancy between $\calD$ and $\ovl\calD$ by means of Tannakian duality. If
\begin{equation}\label{gpint} G_\calC\xonto{\phantom{g}} G_{\ovl\calD}\xonto{\phantom{g}} G_\calD 
	\end{equation} 
is the sequence of Tannakian fundamental groups dual to $\calD\subset\ovl\calD\subset\calC$, then we show that $G_{\ovl\calD}$ is a kind of universal covering of $G_\calD$ ``inside of $G_\calC$''. Under reasonable assumptions on $\calD$ (see condition \eqref{potcond} and the discussion following it) we can assume that the groups of connected components are constant along \eqref{gpint}, so we focus on the neutral components.

\begin{manthm}{C}\label{tannint}\mbox{ }
\begin{enumerate}	
	\item (Proposition \ref{schur}) The objects of $\ovl\calD$ are precisely the $V\in\calC$ for which there exists a Schur functor $S\colon\calC\to\calC$ such that $S(V)$ is an object of $\calD$ of dimension strictly larger than 1.
	\item (Proposition \ref{ovlDuni}) The group $G_{\ovl\calD}^\circ$ is the inverse limit of all pro-algebraic groups $H$ fitting into a diagram
\[ G_\calC^\circ\xonto{\phantom{g}} H\xonto{g} G_\calD^\circ \]
where $g$ is a central isogeny.
\end{enumerate}
	\end{manthm}

The fact that representations $\GL_m\to\GL_n$ are described by Schur functors allows us to reinterpret results of the type of Theorems \ref{wintint} and \ref{mainint} as stating that, for certain choices of $\calC$ and $\calD$, the inclusion $\calD\subset\ovl\calD$ is an equality. 

Unfortunately, Theorem \ref{tannint} by itself is not sufficient to deduce that $\ovl\calD=\calD$ in some concrete interesting example. However, it plays an important role in the proof of the following local result. Here $K$ and $E$ are again two $p$-adic fields, and we write \emph{$\BPKE$-pair} to emphasize what base and coefficient field we are working with; $\BPKE$-pairs of slope 0 correspond to $E$-linear representations of $\Gal(\ovl K/K)$. 

\begin{manthm}{D}[Theorems \ref{ptri} and \ref{thsttri}]\label{bpint} 
	Let $W$ be a $\BPKE$-pair and $S$ a Schur functor. If $S(W)$ is quasi-regular and triangulable, then $W$ is potentially triangulable. If moreover $S(W)$ admits a strict triangulation whose parameter ``lifts to a candidate parameter for $W$'', then $W$ admits a strict triangulation of this candidate parameter.
	\end{manthm}

By replacing $\BPKE$-pairs with modifications of slope 0, we can reduce Theorem \ref{bpint} to the case of $\BPKE$-pairs attached to actual Galois representations; these form a Tannakian category that is neutral, contrary to that of all $\BPKE$-pairs. We are then in a position to apply Theorem \ref{tannint}(i), that allows us to reduce the statement of Theorem \ref{bpint} to a single of Schur functor of length $n$, for each $n$. The choice $S=\Sym^n$ presents some symmetries that we can exploit. Section \ref{galois} is devoted to the actual manipulation of $B$-pairs leading to the proof of Theorem \ref{bpint}. 

In \cite{dimatadm} Di Matteo proved a statement similar to Theorem \ref{bpint} with ``triangulable'' replaced by Hodge--Tate, de Rham, potentially semi-stable, or crystalline. 
Within the Tannakian framework introduced above, we can reformulate his result, in the special case of $\BPKE$-pairs of slope 0, as follows: if $\calC$ is the category of continuous, finite-dimensional $E$-linear representations of $\Gal(\ovl K/K)$ and $\calD$ is the full subcategory tensor generated by those that are potentially semi-stable up to a twist, then $\calD=\ovl\calD$. Our Theorem \ref{bpint} corresponds instead to the choice of $\calD$ as the category tensor generated by the quasi-regular, potentially trianguline representations.

Furthermore, Berger and Di Matteo \cite{berdimtri} proved that, if $V$ and $W$ are two $\BPKE$-pairs such that $V\otimes W$ admits a triangulation whose 1-dimensional subquotients are restrictions to $G_K$ of $\BPQE$-pairs, then both $V$ and $W$ are potentially triangulable. We could combine this result with Theorem \ref{tannint} to show Theorem \ref{bpint} under some additional assumptions on the triangulation. Our technique allows us to work with the weaker condition of quasi-regularity. 

The proof of Theorem \ref{mainint} consists in constructing, for an arbitrary $n$, a crystalline period of $W$ from a crystalline period of $\Sym^nW$: a triangulable $\BPKE$-pair always admits such a period up to a twist, and on the other hand such a period determines a rank 1 sub-$\BPKE$-pair, allowing us to work by induction on the rank of $W$. 

Finally, Sections \ref{secsttri} and \ref{seclift} deal with going from Theorem \ref{bpint} to Theorem \ref{mainint}. The main obstacle here are the subtleties in the definition of the ``trianguline'' condition for a representation 
\[ \rho\colon\Gal(\ovl K/K)\to H(\Qp), \]
$K$ a $p$-adic field, when $H$ is not a general linear group. This problem has been studied in depth in the Ph.D. thesis of V. Daruvar \cite{daruvarth}, who gives a Tannakian definition of triangulable $H$-$(\varphi,\Gamma)$-module that turns out to be practical for studying, for instance, deformation spaces of such objects. We restate his definition in terms of $B$-pairs and specialize it to the case when the coefficients are a field, rather than an affinoid algebra, but we allow for a quasi-split group $H$ rather than just a split group as he does (Definition \ref{deftridar}). 
Alternatively, one could give a ``naive'' definition of triangulinity, in Wintenberger's style, saying that $\rho$ is trianguline if and only if $S\ccirc\rho$ is trianguline for a faithful (hence for any) representation $S$ of $H$. 
Daruvar's definition allows us to speak naturally of parameters, while the naive definition allows us to apply Theorem \ref{bpint}. We bridge the gap by proving that the two definitions are equivalent:

\begin{manthm}{E}[Proposition \ref{Sstrict}]
An $H$-$\BPKE$-pair is triangulable if and only if there exists a faithful representation $S\colon H\to\GL_n$ of $H$ such that the $\BPKE$-pair of  rank $n$ attached to $S(W)$ is triangulable.
\end{manthm}

\smallskip

\subsection*{Notation and terminology.} 
All categories we work with are assumed to be essentially small. 
We denote by $\Ob(\calC)$ the class of objects of a category $\calC$; however, when this does not cause any ambiguity, we may write $X\in\calC$ rather than $X\in\Ob(\calC)$ for an object $X$ of $\calC$. 
For all the tensor categories under consideration the tensor product will be denoted with $\otimes$. We denote by $\Vect_E$ the category of vector spaces over a field $E$. If $V\in\Vect_E$, we write $\GL(V)$ for the group scheme over $E$ of automorphisms of $V$. Given an affine group scheme $G$ and a field $F$, we write $\Rep_F(G)$ for the category of algebraic $F$-representations of $G$, equipped with the usual structure of neutral Tannakian category where the fiber functor is the forgetful one. By a \emph{Tannakian subcategory} $\calD$ of $\calC$ we mean a strictly full (i.e., $\calD$ is full and if $X\cong Y$ in $\calC$ and $Y\in\calD$, then $X\in\calD$) subcategory of $\calC$ closed under the formation of subquotients, direct sums, tensor products, and duals. If $\calS$ is a set of objects of $\calC$, by \emph{tensor category generated by $\calS$} we mean the smallest Tannakian subcategory of $\calC$ containing all the objects in $\calS$ (in particular, it will contain all the duals of the objects in $\calS$).

If $\calC$ is a neutral Tannakian category, we write $G_\calC$ for its Tannakian fundamental group. If $V$ is an object of a $\calC$, we still write $V$ for its image under a specified fiber functor when this does not create confusion.

Throughout the text $p$ will denote a fixed prime number. Given a field $K$, we write $\ovl K$ for an algebraic closure of $K$ (fixed once we use it for the first time) and $G_K$ for the absolute Galois group $\Gal(\ovl K/K)$, equipped with the profinite topology. We fix once for all an extension of the $p$-adic valuation of $\Q_p$ to $\Qp$, and denote by $\C_p$ a completion of $\Qp$ for this valuation. By a ``$p$-adic field'' we will always mean a finite extension of $\Q_p$. 

For every positive integer $m$, we write $\mu_m$ for the group scheme over $\Z$ of $m$-th roots of unit. We do not bother to add specifications for when we are looking at a base change of it to an obvious base (typically a fixed base field). 
When $K$ is a $p$-adic field we write $K^\Gal$ for the Galois closure of $K/\Q_p$ in $\Qp$, $K_0$ for the largest unramified extension of $\Q_p$ contained in $K$, and we set $K_n=K(\mu_{p^n}(\ovl K))$, $K_\infty=\bigcup_{n\ge 1}K_n$, 
$\Gamma_K=\Gal(K_\infty/K)$ and $H_K=\Gal(\ovl K/K_\infty)$. We write $\chi_K^\cyc$ for the cyclotomic character, both $\Gamma_K\to\Z_p^\times$ and $G_K\to\Z_p^\times$, since this will not cause any ambiguity. We pick the Hodge--Tate weight of the cyclotomic character $\chi_{\Q_p}^\cyc$ to be $-1$.

With the hope to make it clearer to the reader when the group representations under consideration are linear or semilinear, we will write the coefficients on the right and as a lower index for linear representations (such as in $\Rep_E(G_K)$) and on the left for semilinear representations (such as in $\bB_\dR\Rep(G_K)$).

For every $n\geq 1$ we write $\1_n$ for the $n\times n$ unit matrix.

By ``image'' of a morphism of (group) schemes over a field of characteristic 0 we always mean the scheme-theoretic image (in the case of group schemes, we equip it with the structure of group scheme induced by that of the target). 

By a \emph{line} in a free module over an arbitrary ring we mean a free submodule of rank $1$. By a \emph{saturated line} we mean a line that is not properly contained in any other line.

\medskip

\textbf{Acknowledgments.} This paper was prepared while I was a postdoctoral fellow at a series of institutions: the Max Planck Institute for Mathematics in Bonn, Heidelberg University, and currently the University of Luxembourg. I am grateful to all of them for their financial support and the excellent working environment. I thank Laurent Berger and Giovanni Di Matteo for interesting discussions about this paper, Giovanni's thesis and their subsequent joint work. I thank Vincent Daruvar for a careful reading of a very early version of this paper. I also thank Gebhard B\"ockle, Eugen Hellmann, Adrian Iovita, Cédric Pépin, Vincent Pilloni, Benjamin Schraen and Jacques Tilouine for their comments.

\tableofcontents

\medskip

\numberwithin{thm}{section}
\numberwithin{equation}{section}

\section{Tensor products landing in a subcategory}\label{tpsub}

Let $E$ be a field of characteristic $0$.
Let $\calC$ be a neutral Tannakian category over $E$. For a (necessarily neutral) Tannakian subcategory $\calD$ of $\calC$, we define another category $\ovl\calD$ as the full subcategory of $\calC$ whose objects are the $V\in\Ob(\calC)$ having the following property: there exists a positive integer $m$ and a collection of objects $V_i$, $i=1,\ldots,m$, such that
\begin{enumerate}
\item $V$ is isomorphic to $\bigoplus_iV_i$ in $\calC$, and
\item for every $i\in\{1,\ldots,m\}$ there exists $W_i\in\Ob(\calC)$ satisfying $V_i\otimes W_i\in\Ob(\calD)$.
\end{enumerate}
We call \emph{basic objects} of $\ovl\calD$ the objects $V$ of $\ovl\calD$ for which there exists $W\in\Ob(\calC)$ such that $V\otimes W\in\Ob(\calD)$. Such a $W$ will automatically be an object of $\ovl\calD$. By definition, an object of $\ovl\calD$ is a direct sum of basic objects, but a non-trivial direct sum of basic objects or an extension of such can still be basic. Clearly all irreducible objects of $\ovl\calD$ are basic.

The category $\ovl\calD$ is a Tannakian subcategory of $\calC$. Indeed:
\begin{itemize}
\item It is clearly stable under direct sums.
\item It is stable under subquotients: Consider an exact sequence $0\to V\to V^\prime\to V^\second$ in $\calC$, such that $V^\prime\in\Ob(\ovl\calD)$. Then there exists $W^\prime\in\Ob(\ovl\calD)$ such that $V^\prime\otimes W^\prime\in\Ob(\calD)$. The sequence $0\to V\otimes W\to V^\prime\otimes W^\prime\to V^\second\otimes W^\second\to 0$ is exact in $\ovl\calD$ (because all objects are $E$-vector spaces), and the central object belongs to $\Ob(\calD)$. Since $\calD$ is Tannakian, it is stable under subquotients, so $V\otimes W$, $V^\second\otimes W^\second$ are objects of $\Ob(\calD)$, and $V,V^\second$ are objects of $\ovl\calD$.
\item It is stable under tensor products: If $V,V^\prime\in\Ob(\ovl\calD)$, then there exist $W,W^\prime\in\Ob(\ovl\calD)$ such that $V\otimes W$, $V^\prime\otimes W^\prime\in\Ob(\calD)$, so $(V\otimes V^\prime)\otimes (W\otimes W^\prime)\in\Ob(\calD)$.
\item It is stable under duals: If $V\in\Ob(\ovl\calD)$, then there exists $W\in\Ob(\ovl\calD)$ such that $V\otimes W\in\Ob(\calD)$, so $V^\vee\otimes W^\vee\cong(W\otimes V)^\vee$ is the dual of an object of $\calD$, hence also an object of $\calD$.
\end{itemize}

\begin{rem}
If $X$ and $Y$ are two objects of $\ovl\calD$ and $Z$ is an extension of $X$ by $Y$ in $\calC$, then $Z$ is not necessarily an object of $\ovl\calD$.
	\end{rem}

We prove that applying the above construction a second time produces no new category. Let $\ovl{\ovl\calD}$ be the category obtained by applying the construction to the inclusion $\ovl\calD\subset\calC$. 

\begin{lemma}
The categories $\ovl{\ovl\calD}$ and $\ovl\calD$ coincide.
	\end{lemma}

\begin{proof}
If $W$ is a basic object of $\ovl{\ovl\calD}$, then there exists $W^\prime\in\Ob(\ovl{\ovl\calD})$ such that $W\otimes W^\prime\in\Ob(\ovl\calD)$. We decompose $W=\bigoplus_{i=1}^mW_i$ and $W^\prime=\bigoplus_{j=1}^nW_j^\prime as sums of basic objects of \ovl\calD$. Let $V_i, 1\le i \le n$, and $V_j^\prime, 1\le j\le m$, be objects of $\ovl\calD$ satisfying $W_i\otimes V_i\in\Ob(\calD)$ and $W_i^\prime\otimes V_i^\prime\in\Ob(\calD)$ for every $i,j$. Then, for each $i$ and $j$, $V_i\otimes (V_j^\prime\otimes W_j\otimes W_j^\prime)$ is an object of $\calD$, hence all of the $V_i$ are objects of $\ovl\calD$, and so is their direct sum $W$.
\end{proof}

Let $G_\calC$, $G_\calD$, $G_{\ovl\calD}$ be the Tannakian fundamental groups of $\calC$, $\calD$, $\ovl\calD$, respectively. They are pro-algebraic groups over $E$. By \cite[Proposition 2.21(a)]{delmiltan}, the inclusions $\calD\into\ovl\calD\into\calC$ give faithfully flat morphisms of affine group schemes over $E$:
\begin{equation}\label{embed} G_\calC\onto G_{\ovl\calD}\to G_\calD. \end{equation}

\begin{rem}\label{1dim}
The category $\ovl\calD$ contains all $1$-dimensional objects of $\calC$, since duals exist in $\ovl\calD$ and the evaluation map $X\otimes X^\vee\to\1_\calD$ is an isomorphism when $X$ is $1$-dimensional. By Tannakian duality, we obtain that the algebraic characters of $G_\calC$ all factor through the morphism $G_\calC\to G_{\ovl\calD}$ of \eqref{embed}. 
\end{rem}

Recall that kernels exist in the category of pro-algebraic groups over $E$. Let $I=\ker{(G_{\ovl\calD}\onto G_\calD)}$.
For an object $V$ of $\ovl\calD$, we denote by $\rho_V\colon G_{\ovl\calD}\to\GL(V)$ the representation associated with $V$ by Tannakian duality, and by $I_V$ and $G_V$ the scheme-theoretic images of $I$ and $G_{\ovl\calD}$, respectively, in $\GL(V)$.

\begin{lemma}\label{Iscalar}
If $V$ is basic in $\ovl\calD$, then $I_V$ is contained in the center of $\GL(V)$. 
\end{lemma}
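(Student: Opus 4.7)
The hypothesis $V\otimes W\in\Ob(\calD)$ amounts, via Tannakian duality, to saying that the $G_\calC$-representation on $V\otimes W$ is trivial upon restriction to $\tilde I:=\ker(G_\calC\to G_\calD)$. Writing $\tilde\rho_V\colon G_\calC\to\GL(V)$ and $\tilde\rho_W\colon G_\calC\to\GL(W)$ for the representations on $V$ and $W$ (the former factoring as $\rho_V\circ q$ through $q\colon G_\calC\onto G_{\ovl\calD}$, the latter not necessarily doing so, which is why we work over $G_\calC$ rather than $G_{\ovl\calD}$), this translates into the identity $\tilde\rho_V(g)\otimes\tilde\rho_W(g)=\id_{V\otimes W}$ in $\GL(V\otimes W)(R)$ for every $E$-algebra $R$ and every $g\in\tilde I(R)=q^{-1}(I)(R)$.

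The heart of the argument is then a linear-algebraic fact: for any $E$-algebra $R$ and any $A\in\GL(V)(R)$, $B\in\GL(W)(R)$ satisfying $A\otimes B=\id$, the element $A$ is necessarily a scalar multiple of the identity (and $B$ its reciprocal). I will verify this by a direct matrix computation. Choosing $E$-bases and writing $A=(A_{ik})$, $B=(B_{jl})$, the identity reads $A_{ik}B_{jl}=\delta_{ik}\delta_{jl}$ for all indices. Taking $i=k$ and $j=l$ yields $A_{ii}B_{jj}=1$, so each $A_{ii}$ is a unit of $R$ equal to $B_{jj}^{-1}$ for every $j$, and in particular independent of $i$; denote this common value by $a\in R^\times$. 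Taking $j=l$ and $i\neq k$ and using that $B_{jj}=a^{-1}$ is a unit forces $A_{ik}=0$. Hence $A=a\cdot\id_V$. Equivalently, the kernel of the homomorphism $\GL(V)\times\GL(W)\to\GL(V\otimes W)$, $(A,B)\mapsto A\otimes B$, is the anti-diagonal copy of $\mathbb{G}_m$.

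Applied to our setting, this shows that $\tilde\rho_V|_{\tilde I}\colon\tilde I\to\GL(V)$ factors through the central subgroup $Z(\GL(V))$. The last step is to descend this conclusion to $I$: because the embedding $\ovl\calD\into\calC$ is fully faithful and stable under subobjects, $q$ is faithfully flat (by the Tannakian formalism of Deligne--Milne), and a standard argument then shows that the induced map $\tilde I\to I$ is also fpqc-surjective. Combining with the factorization $\tilde\rho_V=\rho_V\circ q$, the factorization of $\tilde\rho_V|_{\tilde I}$ through $Z(\GL(V))$ implies that $\rho_V|_I$ also factors through $Z(\GL(V))$, whence $I_V\subset Z(\GL(V))$. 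I do not foresee any serious obstacle: the only mild delicate point is the need to pass through $G_\calC$ because $W$ is only an object of $\calC$ and not of $\ovl\calD$, and the genuine content of the lemma is the elementary matrix computation above.
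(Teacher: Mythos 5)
Your proof is correct and follows essentially the same route as the paper's: the core of both arguments is that the kernel of $\GL(V)\times\GL(W)\to\GL(V\otimes W)$ is the antidiagonal $\G_m$ (your explicit matrix computation just spells out what the paper phrases as ``a tensor product can only be trivial if both factors act through characters''), so triviality of the kernel's action on $V\otimes W$ forces $I_V$ to be scalar. The only divergence is your detour through $G_\calC$ and $\tilde I$, which is harmless but unnecessary: since $W\otimes V\cong V\otimes W\in\Ob(\calD)$, the object $W$ is itself basic in $\ovl\calD$, so $\rho_W$ is already defined on $G_{\ovl\calD}$ and one can argue directly with $I$, as the paper does.
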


\begin{proof}
By definition of $\ovl\calD$, there exists an $E$-vector space $W$ and a representation $\rho_W\colon G_{\ovl\calD}\to\GL(W)$ such that $\rho_{V\otimes W}$ factors through $G_{\ovl\calD}\onto G_\calD$, that is, $\rho_V\otimes\rho_W(I)$ is a direct sum of copies of the trivial representation. 
Now $\rho_{V\otimes W}=\rho_V\otimes\rho_W$, and the only way a tensor product of two $E$-representations of $I$ can be a direct sum of copies of the trivial representation is if the two of them factor through characters of $I$. This means precisely that $I_V$ and $I_W$ are contained in the center of $\GL(V)$.
\end{proof}

\begin{cor}\label{dual}
If $V$ is an object of $\ovl\calD$, then $V\otimes V^\vee$ is an object of $\calD$.
\end{cor}

\begin{proof}
Since $I$ is central in $\GL(V)$ by Lemma \ref{central}, it acts trivially on $V\otimes V^\vee$.
\end{proof}


\begin{lemma}\label{central}
The pro-algebraic subgroup $I$ of $G$ is contained in the center of $G$.
\end{lemma}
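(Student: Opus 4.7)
The plan is to check centrality of $I$ in $G:=G_{\ovl\calD}$ via Tannakian duality: since the algebraic representations of $G_{\ovl\calD}$ separate its points, $I$ lies in the center of $G_{\ovl\calD}$ if and only if, for every object $V\in\Ob(\ovl\calD)$, the images $I_V$ and $G_V$ in $\GL(V)$ commute. So I fix an arbitrary $V\in\Ob(\ovl\calD)$ and aim to show $[I_V,G_V]=1$.

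By the very definition of $\ovl\calD$, one can write $V\cong\bigoplus_{i=1}^m V_i$ in $\calC$, where each $V_i$ is a basic object of $\ovl\calD$. Under this isomorphism the summands $V_i$ become subobjects of $V$ in $\ovl\calD$, hence $G_{\ovl\calD}$-stable subspaces of $V$ under the fiber functor. Consequently $G_V$, and a fortiori its subgroup $I_V$, is contained in the block-diagonal subgroup $\prod_{i=1}^m\GL(V_i)\subseteq\GL(V)$. Moreover, the projection of $I_V$ onto the $i$-th factor is exactly $I_{V_i}$, since both are the scheme-theoretic image of $I$ under the composition $G_{\ovl\calD}\to\GL(V)\to\GL(V_i)$.

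Now Lemma \ref{Iscalar} applies to each basic $V_i$, giving $I_{V_i}\subseteq Z(\GL(V_i))$. Therefore $I_V\subseteq\prod_{i=1}^m Z(\GL(V_i))$, which is central in $\prod_{i=1}^m\GL(V_i)$, and in particular commutes with $G_V$. This gives $[I_V,G_V]=1$ for every $V$, and hence the statement.

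The argument is essentially a bookkeeping step on top of Lemma \ref{Iscalar}, together with the defining property that every object of $\ovl\calD$ decomposes as a direct sum of basic objects. The only point requiring attention is the passage from Lemma \ref{Iscalar}, which controls $I_V$ inside $\GL(V)$ for a single basic $V$, to the claim that $I$ is central in all of $G_{\ovl\calD}$; this is handled by using the full decomposition $V=\bigoplus V_i$ and noticing that $G_V$ preserves it, so that the block-diagonal form of $I_V$ suffices to guarantee that it commutes with $G_V$. I do not anticipate any genuine obstacle beyond this.
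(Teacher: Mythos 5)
Your proof is correct and follows essentially the same route as the paper's: decompose $V$ into basic objects, apply Lemma \ref{Iscalar} to see that $I$ acts by scalars on each summand, and conclude that $I_V$ commutes with the block-diagonal group $G_V$. The only (immaterial) difference is in packaging — you verify commutation on every representation directly, whereas the paper writes $G_{\ovl\calD}=\varprojlim_i G_i$ and runs the same argument on a faithful representation of each algebraic quotient $G_i$ before passing to the limit.
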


\begin{proof}
Write $G_{\ovl\calD}$ as an inverse limit $\varprojlim_{i\in\N}G_i$ of algebraic group schemes, that is, group schemes whose Hopf algebras are finite-dimensional as $E$-vector spaces. Fix $i\in\N$. 
By \cite[Corollary 2.5]{delhodge} the group scheme $G_i$ has a faithful, 
finite-dimensional $E$-representation $\rho_i\colon G_i\to\GL(V_i)$. The the projection $G_{\ovl\calD}\to G_i$ composed with the representation $\rho_i$ gives a representation of $G_{\ovl\calD}$ on $V_i$, that is the Tannakian dual of an object of $\calD$ that we still denote by $V_i$. 
Let $I_i$ be the image of $I$ under $G_{\ovl\calD}\to G_i$. 
For every $i$, write $V_i$ as a direct sum $\bigoplus_jV_{ij}$ of basic objects. 
By Lemma \ref{Iscalar}, $\rho_i(I_i)$ acts via scalar endomorphisms on $V_{ij}$ for every $j$, so it is central in $\GL(V_i)$ and in particular in $\rho_i(G_i)$. Since $\rho_i$ is faithful, this means that $I_i$ is central in $G_i$. By taking a limit over $i\in\N$ and using the fact that $I=\varprojlim_{i\in\N}I_i$ because $I$ is a closed subgroup scheme of $G_{\ovl\calD}$, we conclude that $I$ is central in $G_{\ovl\calD}$.
\end{proof}

For every positive integer $m$ and every object $V$ of $\calC$, we embed $\mu_m$ into $\GL(V)$ in the usual way, by letting it act on $V$ via scalar automorphisms. For the rest of this section and throughout the next one we make the following assumption:
\begin{equation}\label{assum}\tag{$1$-dim} \calD\text{ contains all $1$-dimensional objects of }\calC. 
\end{equation}

\begin{lemma}\label{Ifinite}
Let $V$ be a basic object of $\ovl\calD$ and let $n=\dim_EV$. Then $I_V$ is contained in $\mu_n$.
\end{lemma}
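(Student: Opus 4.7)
The plan is to combine Lemma \ref{Iscalar} with the standing assumption \eqref{assum} via a determinant argument. First, since $V$ is basic, Lemma \ref{Iscalar} tells us that $I_V$ is contained in the center of $\GL(V)$, which is exactly the scalar subgroup $\G_m\hookrightarrow\GL(V)$. Consequently, every element of $I$ acts on $V$ through a scalar $\lambda\in\G_m(E)$.

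Next I would pass to the top exterior power. The object $\det V=\Lambda^n V$ is $1$-dimensional in $\calC$, hence lies in $\calD$ by the assumption \eqref{assum}. Therefore the representation of $G_{\ovl\calD}$ on $\det V$ factors through $G_{\ovl\calD}\onto G_\calD$, so $I$ acts trivially on $\det V$. On the other hand, if $g\in I$ acts as the scalar $\lambda$ on $V$ via $\rho_V$, then it acts on $\det V$ as $\lambda^n$. Combining these two facts gives $\lambda^n=1$, i.e.\ $\lambda\in\mu_n(E)$, and hence $I_V\subseteq\mu_n$.

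There is no real obstacle here: Lemma \ref{Iscalar} has already done the serious work of isolating $I_V$ inside the scalars, and the hypothesis \eqref{assum} is precisely what is needed to push the determinant into $\calD$ and thereby pin down the order of these scalars. The only thing to be mildly careful about is the scheme-theoretic formulation: the claim should be read as an inclusion of subgroup schemes $I_V\hookrightarrow\mu_n\hookrightarrow\GL(V)$, which is immediate from the fact that $I_V\subseteq\G_m$ lies in the kernel of the $n$-th power map $\G_m\to\G_m$ induced by the $I$-action on $\det V$.
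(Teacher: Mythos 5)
Your proposal is correct and follows essentially the same route as the paper: Lemma \ref{Iscalar} places $I_V$ inside the scalars $\G_m\subset\GL(V)$, and assumption \eqref{assum} forces the determinant character (a $1$-dimensional object of $\calC$, hence of $\calD$) to be trivial on $I$, pinning $I_V$ inside $\mu_n$. The paper merely phrases the second step as ``every algebraic character of $G_{\ovl\calD}$ is trivial on $I$'' before specializing to $\det$, which is exactly your argument.
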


\begin{proof}
Assumption \eqref{assum} implies that every algebraic character of $G_{\ovl\calD}$ factors through $G_{\ovl\calD}\to G_\calD$, that is, is trivial on $I$.
Lemma \ref{Iscalar}, $I_V$ is central in $\GL(V)$, so it is contained in the group $\G_m$ embedded in $\GL(V)$ as the subgroup of scalar endomorphisms. The restriction of the determinant of $\GL(V)$ to $G_V$ gives an algebraic character of $G_V$; by our previous observation, such a character has to be trivial on $I_V$. This implies that $I_V$ is contained in the subgroup $\mu_n$ of $\G_m$.
\end{proof}

\begin{rem}\mbox{ }
\begin{enumerate}
\item The group $I$ can be non-trivial, that is, the categories $\calD$ and $\ovl\calD$ can be different: as we will show in Section \ref{galois}, it is the case when $\calC$ is the category $\Rep_E(G_K)$ for some $p$-adic fields $K$ and $E$, and $\calD$ is the full subcategory of trianguline representations. We will see that in this example $\ovl\calD$ is the category of potentially trianguline representations, and there exist for every $K$ and $E$ potentially trianguline representations that are not trianguline (pick any semistabelian, non-semistable representation of $G_K$).
\item It can happen that for some object $V$ of $\ovl\calD$ the group $I_V$ has no $E$-points: by Lemma \ref{Ifinite}, it is always the case if $E$ does not contain any $n$-th roots of unity. Nevertheless, $I_V$ can be a non-trivial subgroup scheme of $\GL(V)$, hence such a $V$ is not necessarily an object of $\calD$.
\end{enumerate}
\end{rem}

\begin{cor}\label{Iprof}
The group $I$ is profinite. 
\end{cor}

\begin{proof}
In the last paragraph of the proof of Lemma \ref{central} we showed that $I=\varprojlim_{i\in\N}I_i$ where $I_i$ is isomorphic to $I_{V_i}$ for some object $V_i$ of $\ovl\calD$ (since we chose the representation $\rho_i$ in the proof of Lemma \ref{central} to be faithful). Writing $V_i$ as a direct sum of basic objects and applying Lemma \ref{Ifinite} we obtain that $I_{V_i}$ is finite, hence $I$ is profinite. 
\end{proof}

For later use, we prove a simple lemma. Let $\cS$ be a tensor generating set of $\ovl\calD$.

\begin{lemma}\label{Ivanishgen}
	If the image of $I$ in $\GL(V)$ is trivial for every $V\in\cS$, then $I$ is trivial.
	\end{lemma}

\begin{proof}
	Since $\cS$ is a tensor generating set of $\ovl\calD$, the assumption implies that the image of $I$ in $\GL(V)$ is trivial for every object $V$ of $\ovl\calD$, which implies that $\ovl\calD=\calD$.
	\end{proof}

\medskip

\section{Pullback via Schur functors}\label{pbschur}

Let $E,\calC,\calD,\ovl\calD$ be as in the previous section. 
Throughout the rest of the paper, underlined, Roman lower-case letters will always denote tuples of finite length whose entries are positive integers. 
Given a tuple $\uu$, we denote by $\length(\uu)$ the number of entries of $\uu$ and by $\ell(\uu)$ the sum of the entries of $\uu$.
Following \cite[Section 1.4]{deltens}, we recall the definition of the Schur functor $\bS^\uu$ in $\calC$.
For a finite dimensional $E$-vector space $V$ and an object $X$ of $\calC$, we define objects $V\otimes X$ and $\cHom(V,X)$ of $\calC$ by asking that
\begin{align*}
\Hom_\calC(V\otimes X,Y)&=\cHom(V,\Hom_\calC(X,Y)) \\
\Hom_\calC(Y,\cHom(V,X))&=\Hom_\calC(V\otimes Y,X)
\end{align*}
for every object $Y$ of $\calC$.

Let $V$ be an object of $\calC$. The symmetric group $\calS_{\ell(\uu)}$ on $\ell(\uu)$ elements acts on the object $V^{\otimes\ell(\uu)}$ of $\calC$ by permuting its factors. We index isomorphism classes of simple representations of $\calS_{\ell(\uu)}$ by tuples of sum $\ell(\uu)$: with each such tuple one associates a Young tableau with $\ell(\uu)$ entries, and we attach a representation to a tableau as in \cite[Lecture 4]{fulhar}. 
For every $\uu$ let $V_\uu$ be a representative of the isomorphism class indexed by $\uu$. By functoriality of $\cHom$ in the two arguments, the group $\calS_{\ell(\uu)}$ acts on $\cHom(V_\uu,V^{\otimes\ell(\uu)})$ via its actions on $V_\uu$ and $V$. 

For an object $X$ of $\calC$ carrying an action of a finite group $S$, the operator 
\[ e_S=\frac{1}{\lvert S\rvert}\sum_{s\in S}s\in\End(X) \]
is idempotent. The image of $e_S$ exists by the axioms of $E$-linear tensor categories, and is denoted by $X^S$.

\begin{defin}
We let $\bS^\uu(V)=\cHom(V_\uu,V^{\otimes\ell(\uu)})^{\calS_{\ell(\uu)}}$ (with the notation introduced just above). This defines a (non-tensor) functor from $\calC$ to itself, that we call the \emph{Schur functor} attached to $\uu$. 
\end{defin}

The Schur functor $\bS^\uu$ can be defined more explicitly by attaching to $\uu$ a suitable idempotent element in $\End(X^{\otimes\ell(\uu)})$ and taking its image, similarly to what one does in the classical theory of Schur functors in the category of vector spaces over a field.


\begin{rem}
The definition of Schur functors only requires the ambient category to be an $E$-linear tensor category (we refer to \cite[Section 1.2]{deltens} for the relevant axioms). In particular we can, and will, apply it to the category of $\BPKE$-pairs. In this case, we recover the definition from \cite[Section 1.4]{dimatadm}.
\end{rem}

\begin{rem}\label{schurcomp}\mbox{ }
\begin{enumerate}[label=(\roman*)]
\item If $V$ is a vector space over $E$ and $\uu$ a tuple, then by functoriality of $\bS^\uu\colon\Vect_E\to\Vect_E$ the $E$-linear action of $\GL(V)$ on $V$ induces an E-linear action of $\GL(V)$ on $\bS^\uu(V)$. This action defines a morphism of $E$-group schemes 
\begin{equation}\label{GLV} \GL(V)\to\GL(\bS^\uu(V)) \end{equation}
that we also denote by $\bS^\uu$. Since $E$ is of characteristic $0$, $\bS^\uu$ is simply the unique irreducible representation of $\GL(V)$ of highest weight $\uu$. The kernel of $\bS^\uu$ is the group scheme $\mu_{\ell(\uu)}$, embedded in the center of $\GL(V)$ in the usual way.
\item If $F\colon\calC\to\calC^\prime$ is an $E$-linear tensor functor, then $\bS^\uu(F(V))=F(\bS^\uu(V))$ for every object $V$ of $\calC$ and every tuple $\uu$. In particular, if $\calC$ is neutral Tannakian, the fiber functor commutes with the Schur functor $\bS^\uu$. 
\item If $V$ is an object of $\calC$ and $\rho_V\colon G_\calC\to\GL(V)$ is the representation attached to $V$ by Tannakian duality, then for every tuple $\uu$ the representation dual to $\bS^\uu(V)$ is $\bS^\uu\ccirc\rho_V$, where $\bS^\uu$ is the morphism $\GL(V)\to\GL(\bS^\uu(V))$ of part (i) of the remark.
\end{enumerate}
\end{rem}

\begin{prop}\label{schur}
Let $V\in\Ob(\calC)$ and $n=\dim_EV$. 
\begin{enumerate}
\item The object $V$ is a basic object of $\ovl\calD$ if and only if there exists a tuple $\uu$ with $\length(\uu)<n$ such that $\bS^\uu(V)\in\Ob(\calD)$.
\item If a tuple $\uu$ as in part (i) exists, then $\bS^\uv(V)\in\Ob(\calD)$ for every tuple such that $\gcd(\ell(\uu),n)\mid\ell(\uv)$.
\end{enumerate}
\end{prop}

\begin{proof}
We first prove the ``if'' of part (i). Let $\uu$ be a tuple such that $\length(\uu)<n$ and $\bS^\uu(V)\in\Ob(\calD)$. 
For every tuple $\uv$ such that $\ell(\uv)=\ell(\uu)$, the representation $\bS^\uu(V)$ of $\GL(V)$ factors through a faithful representation $\bS^\uu_0(V)$ of the reductive group $\GL(V)/\mu_{\ell(\uu)}$. 
Since $\bS_0^\uu(V)$ is faithful, \cite[Proposition 3.1]{delhodge} implies that $\bS_0^\uv(V)$ appears as a subrepresentation of $\bS_0^\uu(V)^{\otimes m}\otimes(\bS_0^\uu(V)^\vee)^{\otimes n}$ for some positive integers $m,n$. The same is true if we see these objects as representations of $\GL(V)$ via $\GL(V)\to\GL(V)/\mu_{\ell(\uu)}$, that is, if we replace $\bS_0^\uv(V)$ and $\bS_0^\uu(V)$ with $\bS^\uv(V)$ and $\bS^\uu(V)$, respectively. Since $\bS^\uu(V)$ is an object of $\calD$ and $\calD$ is stable under tensor products, duals and subquotients, $\bS^\uv(V)$ is also an object of $\calD$. We proved that $\bS^\uu(V)\in\Ob(\calD)$ for any $\uv$ with $\ell(\uv)=\ell(\uu)$. 
By the Littlewood--Richardson rule (see \cite[Appendix 8]{fulhar} for the classical version), the representation $\Sym^{\ell(\uu)-1}(V)\otimes V$ of $\GL(V)$ is a direct sum of representations of the form $\bS^\uv(V)$ with $\ell(\uv)=\ell(\uu)$, so it is an object of $\calD$. By definition of $\ovl\calD$, we conclude that $V$ is an object of $\ovl\calD$.

We now prove the ``only if'' of part (i) together with (ii). 
Let $\uu$ be an arbitrary tuple. 
With the notation introduced in Remark \ref{schurcomp}(iii), the representation $\rho_{\bS^\uu(V)}$ attached to $\bS^\uu(V)$ is $\bS^\uu\ccirc\rho_V$. 
Standard computations give that the kernel of the representation $\GL(V)\to\GL(\bS^\uu(V))$ is $\mu_{\ell(\uu)}$, so the image of $I$ under $\bS^\uu(\rho_V)$ is trivial if and only if $I$ is contained in $\mu_{\ell(\uu)}$. Assuming that $\uu$ satisfies $\bS^\uu(V)\in\Ob(\calD)$, we get that $I_V$ (the scheme-theoretic image of $I$ under $\rho_V$) is contained in $\mu_{\ell(\uu)}$. By Lemma \ref{Ifinite} $I_V$ is contained in $\mu_{n}$, hence in $\mu_{(\ell(\uu),n)}$. We conclude that $\bS^\uv(V)$ is an object of $\calD$ whenever $(\ell(\uu),n)\mid\ell(\uv)$.
\end{proof}

Since $\ovl\calD$ is tensor generated by the class of its basic objects, Proposition \ref{schur}(i) immediately gives the following.

\begin{cor}\label{ovlDgen}
The category $\ovl\calD$ is generated by the class of objects $V$ of $\ovl\calD$ for which there exists a tuple $\uu$ with $\length(\uu)<\dim_E(V)$ such that $\bS^\uu(V)\in\Ob(\calD)$.
\end{cor}

%

\begin{rem}\label{oldarg}
The proof of the ``if'' part of Proposition \ref{schur}(i) 
does not make use of Tannakian duality; therefore this statement holds even if the category $\calC$ is just an $E$-linear tensor category, and so does the following weaker version of (ii): if a tuple $\uu$ as in part (i) exists, then one has $\bS^\uv(V)\in\Ob(\calD)$ for every tuple $\uv$ such that $\ell(\uu)=\ell(\uv)$.
\end{rem}

\begin{rem}
We motivate the condition on the tuple $\uu$ appearing in Proposition \ref{schur}. When $\length(\uu)>n$ the object $\bS^\uu(V)$ is zero, so it cannot possibly give information on $V$. When $\length(\uu)=n$:
\begin{itemize}
\item If all the entries of $\uu$ are given by the same integer, $\bS^\uu(V)$ is a power of $\det(V)$, which belongs to $\Ob(\calD)$ for all $V$ by Remark \ref{1dim}.
\item Otherwise, $\uu$ can be written as $(\uv,0)+(k,\ldots,k)$ for a tuple $\uv$ with $\length(\uv)<n$ and an integer $k>0$, so that $\bS^\uu(V)=\bS^\uv(V)\otimes\det(V)^k$. Since $\ovl\calD$ contains all $1$-dimensional objects of $\calC$ by Remark \ref{1dim}, we have $\bS^\uu(V)\in\Ob(\calD)$ if and only if $\bS^\uv(V)\in\Ob(\calD)$. Therefore the restriction $\length(\uu)<n$ is irrelevant in this case.
\end{itemize}
Note that our assumption on $\uu$ is very similar to that on the partition in \cite[Sections 2.4, 3.3]{dimatadm}, the difference being that we also remove the case where $\length(\uu)=n$ but not all entries are equal; by our second comment above this allows us to simplify the assumption without the results losing strength.
\end{rem}



\subsection{Simple connectedness of fundamental groups}\label{subsecsc}

As in Section \ref{tpsub}, let $\calD$ be a Tannakian subcategory of a Tannakian category $\calC$, $\ovl\calD$ the intermediate category constructed from it, and $I$ be the kernel of the dual morphisms $G_{\ovl\calD}\to G_\calD$. 
We relate the triviality of the kernel $I$ to the simple connectedness of the ``semisimplified'' Tannakian fundamental group of $\calD$. 

If $V$ is an object of $\ovl\calD$ and $\rho\colon G_{\ovl\calD}\to\GL(V)$ its dual representation, we denote by $G_{\ovl\calD,V}$ and $I_V$ the schematic images under $\rho$ of $G_{\ovl\calD}$ and $I$, respectively, and $G_{\calD,V}$ for the quotient $G_{\ovl\calD,V}/I(V)$. 
Given an algebraic group $G$, we denote by $G^\circ$ its neutral connected component. 

For some of our results we will need to assume that $\calD$ satisfies the following condition:
\begin{equation}\tag{pot}\label{potcond}
	\text{The morphism $G_\calC\onto G_\calD$ induces an isomorphism on groups of connected components.}
	\end{equation}

The following result provides some context for condition \eqref{potcond}. Let $\Rep_E(G)$ be the category of finite-dimensional $E$-linear representations of a profinite group $G$. For every open subgroup $H$ of $G$, let $\mathbb P_H$ be a property of the restrictions $V\vert_H$ of the objects in $\Rep_E(G)$, such that 
\begin{itemize}
\item the full subcategory of $\Rep_E(G)$ whose objects are the $V$ such that $V\vert_H$ has $\mathbb P_H$ is a sub-tensor category;
\item if $H^\prime\subset H$ are two open subgroups of $G$ and $V$ an object of $\Rep_E(G)$ such that $V\vert_H$ has $\mathbb P_H$, then $V\vert_{H^\prime}$ has $\mathbb P_{H^\prime}$.
\end{itemize}
Let $\Rep_E^\bP(G)$ be the full subcategory of $\Rep_E(G)$ whose objects are the $V$ for which there exists an open subgroup $H$ of $G$ such that $V\vert_H$ has $\bP_H$. 

\begin{lemma}
The category $\calD=\Rep_E^\bP(G)$ satisfies condition \eqref{potcond} with $\calC=\Rep_E(G)$.
	\end{lemma}

As an example, one can take $G$ to be the absolute Galois group of a $p$-adic field, and $\bP_H$ be any of \{semistable, crystalline, trianguline\} over $H$. Then $\Rep_E^\bP(G)$ is the usual category of potentially \{semistable, crystalline, trianguline\} representations. More generally, one can take $\bP_H$ to be admissibility with respect to an $(E,H)$-regular ring in the sense of Fontaine.

\begin{proof}
Let $G_\calC$ be the Tannakian fundamental group of $\Rep_E(G)$. Let $\rho\colon G\to\GL(V)$ be any object of $\Rep_E(G)$. The image $G_{\calC,V}$ of $G_\calC$ in $\GL(V)$, under the representation dual to $V$, is the Zariski-closure of the image of $G$ under $\rho$. 
Let $J$ be the kernel of $G_\calC\onto G_\calD$, let $J_V$ be its image under $\rho$, and let $J_{V,0}$ be the intersection of $J_V$ with the neutral connected component of $G_{\calC,V}$. In order to show that condition \eqref{potcond} holds, it will be enough to show that $J_V=J_{V,0}$ and then pass to the limit. 

Pick a faithful, finite-dimensional $E$-linear representation $W$ of the quotient $G_{\calC,V}/J_{V,0}$, and consider it as a representation of $G_{\calC,V}$. The image of $J$ in $\GL(W)$ is the quotient $J_V/J{V,0}$ and can be identified with a subgroup of the group of connected components of the image $G_{\calC,W}$ of $G_{\calC,V}/J_{V,0}$ in $\GL(W)$. Since we quotiented by $J_{V,0}$, the map $G_\calC^\circ\to G_{\calC,W}^\circ$ of neutral connected components factors through $G_{\calC}^\circ\to G_\calD^\circ$. Since $G_{\calC,W}$ is the Zariski closure of the image of $G$ in $\GL(W)$, there exists an open subgroup $H$ of $G$ such that the image of $H$ in $\GL(W)$ is contained in the neutral connected component $G_{\calC,W}^\circ$, which admits a faithfully flat morphism from $G_\calD^\circ$. This means that the restriction of $W$ to $H$ belongs to $\Rep_E^{\bP_H}(H)$, and from the definition of $\bP$ we obtain that $W$ belongs to $\calD=\Rep_E^\bP(G)$, so that $J_V/J_{V,0}=0$.
	\end{proof}

Given an algebraic group $G$, write $G^\circ$ for the connected component of unity, $G^\red$ for the quotient of $G^\circ$ by its unipotent radical, and $G^\sms$ for the derived subgroup of $G^\red$. Clearly $G^\circ$ is connected, $G^\red$ is connected reductive, and $G^\sms$ is connected and semi-simple. 

We say that a pro-algebraic group over $E$ is connected (respectively semisimple, simply connected) if it is a projective limit of connected (respectively semisimple, simply connected) algebraic groups over $E$. If I is a small category and $G=\lim_{i\in I} G_i$ in the category of pro-algebraic $E$-groups, with the $G_i$ algebraic, then we define a connected pro-algebraic group $G^\circ=\lim_{i\in I} G_i^\circ$ (the neutral connected component of $G$), a connected pro-reductive group $G^\red=\lim_{i\in I} G_i^\red$, and a pro-semisimple group $G^\sms=\lim_{i\in I} G_i^\sms$. 
We say that $G^\sms$ is simply connected if it can be written as a projective limit of simply connected algebraic groups. 

We say that a morphism $G\to H$ of pro-algebraic groups over $E$ is a central isogeny if it is surjective and its kernel is finite and central in $G$. If $G$ is a connected and pro-semisimple pro-algebraic group, it can be written as a limit $\lim_{i\in I} G_i$ of connected semisimple algebraic groups over $E$. For every $i\in I$, let $G_i^\scn$ be the universal cover of $G_i$. The transition maps between the $G_i$ induce transition maps between the $G_i^\scn$. We let $G^\scn$ be the limit $\lim_{i\in I} G_i^\scn$ and we call it the \emph{universal cover}of $G$. It comes equipped with a morphism to $G$ and has the property that every central isogeny from a connected pro-semisimple group to $G$ is an isomorphism. The isomorphism class of $G^\scn$ as a group over $G$ is independent of the choice of $I$ and of the groups $G_i$.

\begin{rem}\label{smscon}
	A morphism $f\colon G\to H$ of pro-algebraic groups over $E$ induces a morphism 
	\[ f^\sms\colon G^\sms\to H^\sms, \]
	as follows. 
	First restrict $f$ to the neutral connected component $G^\circ$, whose image must be contained in $H^\circ$ giving a morphism
	\[ f^\circ\colon G^\circ\to H^\circ. \]
	If $U_G$ and $U_H$ are the pro-unipotent radicals of $G^\circ$ and $H^\circ$, respectively, then the composition of $f^\circ$ with $H^\circ\onto H^\circ/U_H$ factors through the quotient $G^\circ/U_G$ and a morphism
	\[ f^\red\colon G^\red\to H^\red \]
	of reductive groups. Finally, the restriction of $f^\red$ to the derived subgroup $G^\sms$ of $G^\red$ lands inside of the derived subgroup $H^\sms$ of $H^\red$, giving a morphisms
	\[ f^\sms\colon G^\sms\to H^\sms. \]
	If $f$ is a central isogeny, then a series of simple checks shows that $f^\sms$ is also a central isogeny.
\end{rem}

We prove that the group $G_{\ovl\calD}$ is a kind of ``universal cover of $G_\calD$ inside of $\calC$''. 
	For any surjection $G_1\onto G_2$ of pro-algebraic groups over $E$, consider the category $\calH(G_1,G_2)$ of triples $(H,f,g)$ fitting into a diagram
		\begin{equation}\label{Hfg} G_1\xonto{f} H\xonto{g} G_{2},
	\end{equation}
of pro-algebraic groups over $E$, with morphisms from $(H,f,g)$ to another object $(H^\prime,f^\prime,g^\prime)$ in $\calH(G_1,G_2)$ being the morphisms of pro-algebraic groups $H\to H^\prime$ that make the diagram
\begin{center}
	\begin{tikzcd}
		G_1\arrow{d}{=} \arrow{r}{f} & H\arrow{r}{g}\arrow{d} & G_2\arrow{d}{=} \\
				G_1 \arrow{r}{f^\prime} & H^\prime\arrow{r}{g^\prime} & G_2 \\
		\end{tikzcd}
	\end{center}
commute. Consider the full subcategory $\calH^\ci(G_1,G_2) of \calH(G_1,G_2)$ consisting of the triples $(H,f,g)$ such that the kernel of $g$ is a finite central subgroup of $H^\circ$ (in other words, the restriction $g\colon H^\circ\to G_\calD^\circ$ is a central isogeny). Write $\iota(G_1,G_2)$ for the inclusion functor $\calH^\ci(G_1,G_2)\into\calH(G_1,G_2)$. 

Let $\pi_\calC^{\ovl\calD}\colon G_\calC\to G_{\ovl\calD}$, $\pi_\calC^{\calD}\colon G_{\ovl\calD}\to G_\calD$ and $\pi_\calC^{\calD}=\pi_\calC^\calD\ccirc\pi_\calC^{\ovl\calD}$ be the usual surjections.

\begin{prop}\label{ovlDuni}
Assume that condition \eqref{potcond} is satisfied. Then the triple $(G_{\ovl\calD},\pi_\calC^{\ovl\calD},\pi_\calC^{\calD})$ is the limit of the diagram $\iota(G_\calC,G_\calD)\colon\calH^\ci(G_\calC,G_\calD)\into\calH(G_\calC,G_\calD)$. In particular $G_{\ovl\calD}$ induces an isomorphism on the groups of connected components and on the pro-unipotent radicals.
	\end{prop}

In order to prove Proposition \ref{ovlDuni} we rely on the following lemma. We use, as usual, the notations of Remark \ref{smscon}.

\begin{lemma}\label{limequiv}
	The following statements are equivalent:
	\begin{enumerate}[label=(\roman*)]
	\item $(G_{\ovl\calD},\pi_\calC^{\ovl\calD},\pi_\calC^{\calD})$ is the limit of $\iota(G_\calC,G_\calD)\colon\calH^\ci(G_\calC,G_\calD)\into\calH(G_\calC,G_\calD)$;
	\item $(G_{\ovl\calD}^\circ,\pi_\calC^{\ovl\calD,\circ},\pi_\calC^{\calD,\circ})$ is the limit of $\iota(G_\calC^\circ,G_\calD^\circ)\colon  \calH^\ci(G_\calC^\circ,G_\calD^\circ)\into\calH(G_\calC^\circ,G_\calD^\circ)$;
	\item $(G_{\ovl\calD}^\red,\pi_\calC^{\ovl\calD,\red},\pi_\calC^{\calD,\red})$ is the limit of $\iota(G_\calC^\red,G_\calD^\red)\colon  \calH^\ci(G_\calC^\red,G_\calD^\red)\into\calH(G_\calC^\red,G_\calD^\red)$.
	\end{enumerate}
		Moreover, if any of (i), (ii), (iii) holds, then
			\begin{itemize}
\item[(iv)] $(G_{\ovl\calD}^\sms,\pi_\calC^{\ovl\calD,\sms},\pi_\calC^{\calD,\sms})$ is the limit of $\iota(G_\calC^\sms,G_\calD^\sms)\colon  \calH^\ci(G_\calC^\sms,G_\calD^\sms)\into\calH(G_\calC^\sms,G_\calD^\sms)$.
\end{itemize}
\end{lemma}

\begin{proof}
	In order to prove the equivalence of (i), (ii), (iii) we rely on the following simple remark: 
	\begin{itemize}
\item[($\ast$)] if $\calA$, $\calB$ are two categories, $\calA_0\subset\calA$, $\calB_0\subset\calB$ two small subcategories, and $F\colon\calA\to\calB, G\colon\calB\to\calA$ two quasi-inverse functors that induce an equivalence of categories $\calA_0\cong\calB_0$, then an object $L$ of $\calA$ is the limit of $\calA_0\into\calA$ if and only if $F(L)$ is the limit of $\calB_0\into\calB$.
\end{itemize}
		
	We prove that (i) and (ii) are equivalent by applying ($\ast$) to $\calA=\calH(G_\calC,G_\calD)$, $\calB=\calH(G_\calC^\circ,G_\calD^\circ)$ and $\calA_0$, $\calB_0$ the subcategories of ``central isogenies''. We construct the two quasi-inverse functors $F_\varnothing^\circ$ an $F_\circ^\varnothing$ that we need. 
	If $(H,f,g)$ is an object of $\calH(G_\calC,G_\calD)$, then, with the notations of Remark \ref{smscon}, the diagram
	\begin{equation}\label{funcirc}
		\begin{tikzcd}
			\calC\arrow[two heads]{r}{f} & H\arrow[two heads]{r}{g} & G_\calD \\
						\calC^\circ\arrow[two heads]{r}{f^\circ}\arrow[hookrightarrow]{u} & H^\circ\arrow[two heads]{r}{g^\circ}\arrow[hookrightarrow]{u} & G_\calD^\circ\arrow[hookrightarrow]{u} \\
		\end{tikzcd}
	\end{equation}
commutes, and moreover all squares are cartesian because $G_\calC\onto G_\calD$ induces an isomorphism on connected components by condition \eqref{potcond}. Hence $(H,f,g)\mapsto (H^\circ,f^\circ,g^\circ)$ defines a functor 
\[ F_\varnothing^\circ\colon\calH(G_\calC,G_\calD)\to\calH(G_\calC^\circ,G_\calD^\circ). \]
If the kernel of $g$ is a finite central subgroup of $G^\circ$, then the same is true for the kernel of $g^\circ$, so that $F_\varnothing^\circ$ can be restricted to a functor $\calH^\ci(G_\calC,G_\calD)\to\calH^\ci(G_\calC^\circ,G_\calD^\circ)$.

Vice versa, if we start with a triple $(H_0,f_0,g_0)$ in $\calH(G_\calC^\circ,G_\calD^\circ)$, we can construct a triple $(H,f,g)$ in $\calH(G_\calC,G_\calD)$ satisfying $H^\circ=H_0, f^\circ=f_0, g^\circ=g_0$ by taking pushouts of the second row of \eqref{funcirc} along $G_\calD^\circ\into G_\calD$. This provides us with a functor 
\[ F_\circ^\varnothing\colon\calH(G_\calC^\circ,G_\calD^\circ)\to\calH(G_\calC,G_\calD) \]
that is quasi-inverse to $F_\varnothing^\circ$. The injection $H_0\into H$ restricts to an isomorphism between the kernels of $g$ and $g^\circ$, hence $F_\circ^\varnothing$ restricts to a functor $\calH^\ci(G_\calC,G_\calD)\to\calH^\ci(G_\calC^\circ,G_\calD^\circ)$, as desired.

We prove the equivalence between (ii) and (iii) by applying ($\ast$) again, after constructing a pair of quasi-inverse functors $F_\circ^\red, F_\red^\circ$. 
Let $(H,f,g)$ be an object of $\calH^\ci(G_\calC^\circ,G_\calD^\circ)$. By quotienting out unipotent radicals, we obtain an object $(H^\red,f^\red,g^\red)$ as in the second row of the commutative diagram
	\begin{center}
	\begin{tikzcd}
G_\calC^\circ\arrow{r}{f}\arrow[two heads]{d} \arrow[two heads]{r} & H\arrow[two heads]{r}{g}\arrow[two heads]{d} & G_\calD^\circ\arrow[two heads]{d} \\
G_\calC^\red\arrow[two heads]{r}{f^\red} & H^\red\arrow[two heads]{r}{g^\red} & G_\calD^\red
		\end{tikzcd}
\end{center}
If $g$ is a central isogeny then the quotient map $H^\circ\onto H^\red$ induces an isomorphism between $\ker(g^\circ)$ and $\ker(g^\red)$, so that $g^\circ$ is also a central isogeny. Then $(H,f,g)\mapsto (H^\red,f^\red,g^\red)$ defines the required functor 
\[ F_\circ^\red\colon\calH(G_\calC^\circ,G_\calD^\circ)\to\calH(G_\calC^\red,G_\calD^\red). \]

Vice versa, starting from a triple $(H_0,f_0,g_0)$ in $\calH(G_\calC^\red,G_\calD^\red)$, we construct a triple $(H,f,g)$ in $\calH(G_\calC^\circ,G_\calD^\circ)$ that satisfies $H^\red=H_0$, $f^\red=f_0$ and $g^\red=g_0$: Define $H$ as the pullback of $G_\calC^\red\onto G_\calD^\red \twoheadleftarrow H_0$ and $g$ as the map it comes equipped with. Since the kernel of $G_\calD^\circ\onto G_\calD^\red$ is pro-unipotent, the same is true of the kernel of $H\onto H_0$. In particular $H^\red=H_0$ and $g^\red=g_0$. 
The group $G_\calC^\circ$ admits compatible maps to $H$ and $G_\calD^\circ$, hence a map $f$ to the pullback $H$. Commutativity of all the diagrams involved gives $f^\red=f_0$. The projection $H\onto H_0$ induces an isomorphism between $\ker(g)$ and $\ker(g_0)$, so that if $g_0$ is a central isogeny then $g$ is also one. Therefore we obtain a functor 
\[ F_\red^\circ\calH(G_\calC^\red,G_\calD^\red)\to\calH(G_\calC^\circ,G_\calD^\circ) \]
that is quasi-inverse to $F_\circ^\red$ and has the desired properties.

We conclude the proof by showing that (iii) implies (iv). 
We construct two functors
\[ F_\red^\sms\colon\calH(G_\calC^\red,G_\calD^\red)\to\calH(G_\calC^\sms,G_\calD^\sms) \]
and 
\[ F_\sms^\red\colon\calH(G_\calC^\sms,G_\calD^\sms)\to\calH(G_\calC^\red,G_\calD^\red) \]
such that 
\begin{enumerate}[label=(\arabic*)]
\item $F_\sms^\red\circ F_\red^\sms$ is naturally isomorphic to the identity functor on $\calH(G_\calC^\sms,G_\calD^\sms)$,
\item $F_\red^\sms(G_\calD^\red)=G_\calD^\sms$, 
\item the essential image of the restriction of $F_\red^\sms$ to $\calH^\ci(G_\calC^\red,G_\calD^\red)$ is contained in $\calH^\ci(G_\calC^\sms,G_\calD^\sms)$, and
\item every object in the essential image of the restriction of $F_\sms^\red$ to $\calH^\ci(G_\calC^\sms,G_\calD^\sms)$ is a limit of a diagram in $\calH^\ci(G_\calC^\red,G_\calD^\red)$.
\end{enumerate}
This will be enough to prove the desired statement: if $L_\sms$ and $L_\red$ are the limits of $\calH^\ci(G_\calC^\sms,G_\calD^\sms)$ and $\calH^\ci(G_\calC^\red,G_\calD^\red)$, respectively, then conditions (1) and (3) give us the existence of a morphism from $L_\sms$ to $F_\red^\sms(L^\red)$. Condition (4), on the other hand, gives us a morphism $L_\red\to F_\sms^\red(L_\sms)$, that is mapped by $F_\red^\sms$ to a morphism from $F_\red^\sms(L_\red)\to F_\red^\sms(F_\sms^\red(L_\sms))$, whose target is isomorphic to $L_\sms$ by (1). The universal property of the limit forces the two morphisms we constructed between $L_\sms$ and $F_\red^\sms(L^\red)$ to be isomorphisms, and combining this with (2) gives (iv).

In order to construct the functor $F_\red^\sms$, simply start with any $(H,f,g)\in\calH(G_\calC^\red,G_\calD^\red)$ and apply the construction of Remark \ref{smscon} to the first row of
	\begin{center}
	\begin{tikzcd}
		G_\calC^\red\arrow{r}{f} \arrow[two heads]{r} & H\arrow[two heads]{r}{g} & G_\calD^\red \\
		G_\calC^\sms\arrow[two heads]{r}{f^\sms}\arrow[hook]{u} & H^\sms\arrow[two heads]{r}{g^\sms}\arrow[hook]{u} & G_\calD^\sms\arrow[hook]{u}
	\end{tikzcd}
\end{center}
in order to obtain the second row, hence an object $(H^\sms,f^\sms,g^\sms)$ of $\calH(G_\calC^\sms,G_\calD^\sms)$. If $g$ is a central isogeny, then so is $g^\sms$, since the kernel of $g^\sms$ injects into that of $g$ via $H^\sms\into H$. 

Vice versa, pick an object $(H_0,f_0,g_0)$ of $\calH(G_\calC^\sms,G_\calD^\sms)$. We write $Z(G)$ for the center of a pro-reductive group $G$. Since $G_\calC^\red$ and $G_\calD^\red$ are pro-reductive, there are exact sequences
\begin{equation}\label{redstr}
	\begin{gathered}
		0\to Z(G_\calC^\red)\cap G_\calC^\sms\to Z(G_\calC^\red)\times G_\calC^\sms\xto{\pi_\calC}G_\calC^\red\to 0, \\
0\to Z(G_\calD^\red)\cap G_\calD^\sms\to Z(G_\calD^\red)\times G_\calD^\sms\xto{\pi_\calD}G_\calD^\red\to 0, 
\end{gathered}
\end{equation}
where the injection is the diagonal one. 
The morphism $\pi_\calC^{\calD,\red}\colon G_\calC^\red\onto G_\calD^\red$ restricts to a morphism $\pi_Z\colon Z(G_\calC^\red)\to Z(G_\calD^\red)$. 
Consider the morphisms
\begin{align*}
 f_1\colon Z(G_\calC^\red)\times G_\calC^\sms&\to Z(G_\calD^\red)\times H_0 \\
(z,h)&\mapsto (\pi_Z(z),f_0(h))
\end{align*}
	and
\begin{align*}
	\wtl g_1\colon Z(G_\calD^\red)\times H_0&\to Z(G_\calD^\red)\times G_\calD^\sms \\
(z,h)&\mapsto(z,g_0(h))
\end{align*}
Let $H_1$ be the image of $f_1$, and write $\wtl f_1$ for the map $Z(G_\calC^\red)\times G_\calC^\sms\to H_1$ induced by $f_1$, and $\wtl g_1$ for the restriction of $g_1$ to $H_1$. 
We fit these maps into a diagram
	\begin{center}
	\begin{tikzcd}
		Z(G_\calC^\red)\times G_\calC^\sms\arrow[two heads]{r}{\wtl f_1} \arrow[two heads]{d}{\pi_\calC} & Z(G_\calD^\red)\times H_0\arrow{r}{\wtl g_1}\arrow[two heads]{d}{\pi_H} & Z(G_\calD^\red)\times G_\calD^\sms\arrow[two heads]{d}{\pi_\calD} \\
		G_\calC^\red\arrow{r}{f} \arrow[two heads]{r} & H\arrow[two heads]{r}{g} & G_\calD^\red \\
		G_\calC^\sms\arrow[two heads]{r}{f_0}\arrow[hook]{u} & H_0\arrow[two heads]{r}{g_0}\arrow[hook]{u} & G_\calD^\sms\arrow[hook]{u}
	\end{tikzcd}
\end{center}
where 
\begin{itemize}
\item the maps $\pi_\calC$, $\pi_\calD$ come from \eqref{redstr},
\item $H$ and the maps $f$ and $\pi_H$ are defined as the pushout of the top left square,
\item the map $g$ comes from the universal property of the pushout, after checking that the maps $\pi_\calC^{\calD,\red}\ccirc\pi_\calC$ and $(\pi_\calD\ccirc g)\ccirc f$ coincide,
\item the map $H_0\to H$ is obtained as the composite of $H_0\into Z(G_\calD^\red)\times H_0, h\mapsto (1,h)$ with the projection $\pi_H$,
\item $f$ is surjective because $\wtl f_1$ is, and $g$ is surjective because the composite $\pi_\calC^{\calD,\red}=g\ccirc f$ is surjective.
\end{itemize}
We define $F_\sms^\red$ as the functor $(G_\calC^\sms,G_\calD^\sms)\to\calH(G_\calC^\red,G_\calD^\red$ that maps $(H_0,f_0,g_0)$ to the triple $(H,f,g)$ from the above diagram. 

By construction of $g_1$, the kernel of $g_1$ is $\{1\}\times\ker(g_0)$, and that of $\pi_\calD\ccirc g_1\colon Z(G_\calD^\red)\times H_0\to G_\calD^\red$ is an extension of $\ker(\pi_\calD)=Z(G_\calD^\red)\cap G_\calD^\sms$ by $\ker(g_1)$. If $g_0$ is an isogeny then $\ker(g_1)$ is finite. Since $\ker(\pi_\calD)$ is profinite, $\ker(\pi_\calD\ccirc g_1)$ is also profinite. The kernel of $\wtl g_1$ is a subgroup of that of $g_1$, and $\ker(g)$ is a quotient of $\ker(\wtl g_1)$. Therefore $\ker(g)$ is also profinite, and we can write g as a limit of isogenies onto $G_\calD^\red$. Since $G_\calC$ is connected, so is its quotient $H$ and so is any quotient of $H$ admitting an isogeny onto $G_\calD^\red$. Since the base field $E$ is of characteristic 0, every isogeny out of a connected pro-reductive $E$-group is central, hence $g\colon H\onto G_\calD^\sms$ is a limit of objects in $\calH^\ci(G_\calC^\red,G_\calD^\red)$. Therefore $F_\sms^\red$ has property (4). Conditions (1-3) are easily checked. 
\end{proof}

\begin{proof}[Proof of Proposition \ref{ovlDuni}]
	It is enough to prove statement (iii) in Lemma \ref{limequiv}.
	Consider a triple $(H,f,g)$ in $\calH^\ci(G_\calC^\red,G_\calD^\red)$. Observe that $H$ is necessarily reductive, being a quotient of $G_\calC^\red$. 
	Let $\rho_V\colon H\to\GL(V)$ be any irreducible representation of $H$, and let $H_V$ and $\ker(g)_V$ be the schematic images of $H$ and $\ker(g)$, respectively, in $\GL(V)$. Since $\ker(g)_V$ is a central subgroup of $H$ and $V$ is irreducible, by Schur's lemma $\ker(g)_V$ must be contained in the center of $\GL(V)$. Given that $\ker(g)_V$ is finite, it must be contained in $\mu_n$, acting on $V$ via scalar endomorphisms, for a sufficiently large $n$. Now pick any tuple $\uu$ with $\ell(\uu)=n$ and $\length(\uu)<n$. By Remark \ref{schurcomp}(i), the kernel of $\bS^\uu\colon\GL(V)\to\GL(\bS^\uu(V))$ is $\mu_n$, hence $\bS^\uu\ccirc\rho_V$ factors through $H_V/\ker(g)_V$. Since the morphism $H\to H_V/\ker(g)_V$ factors through $H\onto H/\ker(g)\cong G_\calD^\red$, the representation $V$, seen as an object of $\calC$ via $f\colon G_\calC^\red\onto H$, satisfies $\bS^\uu(V)\in\calD$. Thanks to Proposition \ref{schur}(i), we conclude that $V$ is an object of $\ovl\calD$, or in other words, that the representation 
\[ \rho_V\ccirc f\colon G_\calC^\red\to\GL(V) \]
factors through $G_\calC^\red\onto G_{\ovl\calD}^\red$. Since this holds for every irreducible representation $V$ of $H$ and every representation of the reductive group $H$  is semisimple, we conclude that $f$ itself factors as the composition of $G_\calC^\red\onto G_{\ovl\calD}^\red$ and a map $f_0G_{\ovl\calD}^\red\to H^\red$, providing us with a morphism
\begin{center}
	\begin{tikzcd}
		G_\calC^\red\arrow[two heads]{r}{\pi_\calC^{\ovl\calD,\red}}\arrow{d}{=} & G_{\ovl\calD}^\red\arrow[two heads]{r}{\pi_\calC^{\calD,\red}}\arrow{d}{f_0} & G_\calD^\red\arrow{d}{=} \\
		G_\calC^\red\arrow[two heads]{r}{f^\red} & H^\red\arrow[two heads]{r}{g^\red} & G_\calD^\red
		\end{tikzcd}
	\end{center}
from $(G_{\ovl\calD}^\red,\pi_\calC^{\ovl\calD,\red},\pi_\calC^{\calD,\red})$ to $(H,f,g)$. 

Since $H$ was chosen arbitrarily, we obtain a morphism from $(G_{\ovl\calD}^\red,\pi_\calC^{\ovl\calD,\red},\pi_\calC^{\calD,\red})$ to the limit of $\iota\colon\calH^\ci(G_\calC^\red,G_\calD^\red)\into\calH(G_\calC^\red,G_\calD^\red)$. In order to prove that it is an isomorphism, it is sufficient to write $(G_{\ovl\calD}^\red,\pi_\calC^{\ovl\calD,\red},\pi_\calC^{\calD,\red})$ as a limit of some subdiagram of $\iota(G_\calC^\red,G_\calD^\red)$. For this, consider a finite-dimensional $E$-linear representation $V$ of $G_{\ovl\calD}^\red$, and let $G_{\ovl\calD,V}^\red$ and $I_V$ be the images of $G_{\ovl\calD}^\red$ and $I$, respectively, in $\GL(V)$. Here $I$ denotes, as usual, the kernel of $\pi_\calC^{\ovl\calD,\red}\colon G_{\ovl\calD}^\red\to G_\calD^\red$. Since $I_V$ is finite and central by Lemma \ref{central} and Corollary \ref{Iprof}, the surjection $\pi_V\colon G_{\ovl\calD,V}^\red\to G_{\ovl\calD,V}^\red/I_V$ is a central isogeny. Pulling back $\pi_V$ along $G_\calD^\red\to G_{\ovl\calD,V}^\red/I_V$, we obtain a diagram
\begin{center}
	\begin{tikzcd}
		G_\calC^\red\arrow[two heads]{r}{\pi_\calC^{\ovl\calD,\red}}\arrow[two heads]{d} & G_{\ovl\calD}^\red\arrow[two heads]{r}{f} & H_V \arrow[two heads]{r}{\wtl\pi_V}\arrow[two heads]{d} & G_\calD^\red\arrow[two heads]{d}{=} \\
		G_{\calC,V}^\red\arrow[two heads]{rr}{\pi_\calC^{\ovl\calD,\red}} & & G_{\ovl\calD,V}^\red\arrow[two heads]{r}{\pi_V} & G_{\ovl\calD,V}^\red/I_V \\
	\end{tikzcd}
\end{center}
where $H_V=G_\calD^\red\times_{G_{\ovl\calD,V}^\red/I_V}G_{\ovl\calD,V}^\red$ and the first line gives an object $(H_V,f_V\ccirc\pi_\calC^{\ovl\calD,\red},\wtl\pi_V)$ of $\calH^\ci(G_\calC^\red,G_\calD^\red)$. From $G_{\ovl\calD}^\red=\lim_{V\in\Rep_E(G_{\ovl\calD}^\red)}G_{\ovl\calD,V}^\red$, we deduce that $(G_{\ovl\calD}^\red,\pi_\calC^{\ovl\calD,\red},\pi_\calC^{\calD,\red})$ is the limit of the full subcategory of $\calH^\ci(G_\calC^\red,G_\calD^\red)$ consisting of the triples of the form $(H_V,f_V\ccirc\pi_\calC^{\ovl\calD,\red},\wtl\pi_V)$ for some $V\in\Rep_E(G_{\ovl\calD}^\red)$.
	\end{proof}

\begin{cor}\label{ovlDsc}
Assume that $G_\calC^\sms$ is simply connected and that condition \eqref{potcond} holds. Then $G_{\ovl\calD}^\sms$ is the universal cover of $G_\calD^\sms$. 
In particular:
\begin{enumerate}
\item for every object $V$ of $\ovl\calD$, $G_{\calD,V}^\sms$ is the universal cover of $G_{\calD,V}^\sms$;
\item if $\ovl\calD=\calD$, then $G_{\ovl\calD}^\sms$ is simply connected;
\item if $G_\calC=G_\calC^\sms$, then $\calD=\ovl\calD$ if and only if $G_\calD$ is simply connected.
\end{enumerate}
\end{cor}

\begin{proof}
This follows immediately from Proposition \ref{ovlDuni} and the equivalence between (i) and (iv) in Lemma \ref{limequiv}. Indeed, if $G_\calC^\sms$ is simply connected, for every central isogeny $g\colon H\to G_\calD^\sms$, with $H$ connected, there exists a surjection $f\colon G_\calC^\sms\to H$ such that $g\ccirc f=\pi_\calC^\calD\colon G_\calC^\sms\onto G_\calD^\sms$. In particular every such $g$ defines an object $(H,f,g)$ of $\calH^\ci(G_\calC,G_\calD)$, so that the limit of $\calH^\ci(G_\calC,G_\calD)\to\calH(G_\calC,G_\calD)$ is also the limit of all $g$, that is, $G_{\ovl\calD}^\sms\onto G_\calD^\sms$ is the universal cover of $G_\calD^\sms$.
\end{proof}

\begin{rem}
	Assume that the hypotheses of Corollary \ref{ovlDsc} are satisfied. 
	Similarly to \cite[\S 3.1, Remarque]{wintentann}, we observe that the reverse implication to (ii) does not hold: If $V$ is an object of $\calD$, then by "" there exists an object $W$ of $\calD$ such that $G_{\ovl\calD,W}^\sms$ is the universal cover of $G_{\calD,V}^\sms$. In particular, if $G_{\ovl\calD}^\sms$ is simply connected, then 
	\[ G_{\ovl\calD}^\sms\to G_{\calD,V}^\sms \]
	factors through $G_{\ovl\calD,W}^\sms$. However, if the kernel of the central isogeny 
	\begin{equation}\label{WtoV} G_{\ovl\calD,W}\to G_{\calD,V}
		\end{equation}
	is not contained in $G_{\ovl\calD,W}^\sms$, then \eqref{WtoV} does not factor through $G_{\calD,W}$, so that $W$ is an object of $\ovl\calD$ but not of $\calD$.
	\end{rem}

\medskip

\section{Application to categories of \texorpdfstring{$B$}{B}-pairs}\label{galois}

We recall some definitions from the theory of $B$-pairs, as one can find for instance in \cite{berconstr}. Let $K$ be a $p$-adic field, and let $\bB$ be a topological ring equipped with a continuous action of $G_K$. We call \emph{semilinear $\bB$-representation of $G_K$}, or in short \emph{$\bB$-representation of $G_K$}, a free $\bB$-module $M$ of finite rank equipped with a semilinear action of $G_K$, that is, such that $g(bm)=g(b)g(m)$ for every $b\in\bB$, $m\in M$ and $g\in G_K$. We denote by $\bB\Rep(G_K)$ the category whose objects are the semilinear $\bB$-representations of $G_K$ and whose morphisms are the $G_K$-equivariant morphisms of $\bB$-modules. We call \emph{rank} of an object of $\bB\Rep(G_K)$ its rank as a $\bB$-module. We say that a $\bB$-representation $M$ of $G_K$ is \emph{trivial} if $M$ admits a $\bB$-basis consisting of $G_K$-invariant elements. We call \emph{eigenvector} in a semilinear $\bB$-representation $M$ a vector that \emph{belongs} to a $G_K$-stable $\bB$-line in $M$ (recall that by a line in a free $\bB$-module we mean a free rank 1 submodule). An eigenvector does not necessarily generate a $G_K$-stable line as a $\bB$-module; some of its eigenvalues may belong to the total fraction ring of $\bB$.

When $\bB$ has a structure of $E$-algebra with respect to which the action of $G_K$ is $E$-linear, and $\eta$ is an $E$-valued character of $G_K$, we write $\bB(\eta)$ for the rank $1$ $\bB$-representation $\bB\otimes_EE(\eta)$, where $G_K$ acts diagonally. 

Let $\bB$ be an $(E,G_K)$-regular ring in the sense of \cite[Definition 2.8]{fonouy}; it is in particular a topological $E$-algebra equipped with a continuous action of $G_K$. Let $V$ be an $E$-linear representation of $G_K$. We define a $\bB$-semilinear representation of $G_K$ by letting $G_K$ act diagonally on $\bB\otimes_EV$. 
We say that $V$ is \emph{$\bB$-admissible} if the $\bB$-semilinear representation $\bB\otimes_EV$ is trivial.

We use the standard notation for Fontaine's rings of periods $\bB_\HT, \bB_\dR, \bB_\dR^+, \bB_\cris, \bB_\st$, as defined in \cite{fontaine}. Each of these objects is a $(\Q_p,G_K)$-regular ring. We denote by $\varphi$ the Frobenius endomorphism of both $\bB_\cris$ and $\bB_\st$, and follow the standard notation again in setting $\bB_e=\bB_\cris^{\varphi=1}$. We write $t$ for Fontaine's choice of a generator of the maximal ideal of the complete discrete valuation ring $\bB_\dR^+$.

Let $E$ be a $p$-adic field. We set $\bB_{?,E}=\bB_?\otimes_{\Q_p}E$ for $?\in\{\HT,\dR,\st,\crys,e\}$, and also $\bB_{\dR,E}^+=\bB_\dR^+\otimes_{\Q_p}E$. Each of these rings is a topological $E$-algebra, that we equip with the continuous action of $G_K$ obtained by extending \emph{$E$-linearly} the action of $G_K$ on the original $\Q_p$-algebra.

\begin{defin}
A \emph{$\BPKE$-pair} is a pair $(W_e,W_\dR^+)$ where:
\begin{itemize}
\item $W_e$ is an object of $\bB_{e,E}\Rep(G_K)$;
\item $W_\dR^+$ is a $G_K$-stable $\bB_{\dR,E}^+$-lattice of $\bB_{\dR,E}\otimes_{\bB_{e,E}}W_e$. 
\end{itemize}
We write $W_\dR$ for the $\bB_\dR$-representation $\bB_{\dR,E}\otimes_{\bB_{e,E}}W_e$.
We define the \emph{rank} of $(W_e,W_\dR^+)$ as the common rank of $W_e$ and $W_\dR^+$. 

Given two $\BPKE$-pairs $(W_e,W_\dR^+)$ and $(W_e^\prime,W_\dR^{+,\prime})$, a \emph{morphism of $\BPKE$-pairs} $(W_e,W_\dR^+)\to (W_e^\prime,W_\dR^{+,\prime})$ is a pair $(f_e,f_\dR^+)$ where:
\begin{itemize}
\item $f_e\colon W_e\to W_e^\prime$ is a morphism in $\bB_{e,E}\Rep(G_K)$,
\item $f_\dR^+$ is a morphism in $\bB_\dR^+\Rep(G_K)$,
\item the two morphisms $W_\dR\to W_\dR^\prime$ in $\bB_\dR\Rep(G_K)$ obtained by extending $\bB_\dR$-linearly $f_e$ and $f_\dR^+$ coincide.
\end{itemize}
\end{defin}

Given two $\BPKE$-pairs $W=(W_e,W_\dR^+)$ and $X=(X_e,X_\dR^+)$, we say that $X$ is a \emph{modification} of $W$ if $X_e\cong W_e$ \cite[Définition 2.1.8]{berconstr}. If $X_e\subset W_e$ and $X_\dR^+\subset W_\dR^+$, then we say that $X$ is a sub-$\BPKE$-pair of $W$. We say that such an $X$ is a \emph{saturated} sub-$\BPKE$-pair of $W$ if the lattice $X_\dR^+$ is saturated in $W_\dR^+$, that is, if $X_\dR^+=X_\dR\cap W_\dR^+$. The quotient of$W$ by a sub-$\BPKE$-pair $X$ admits a natural structure of $\BPKE$-pair if and only if $X$ is saturated in $W$. Given a sub-$\BPKE$-pair $X$ of $W$, we can always find a unique saturated modification of it in $W$ by replacing $X_\dR^+$ with $X_\dR\cap W_\dR^+$; we will call this modification the \emph{saturation} of $X$ in $W$.

Berger proved that the category of $\BPKE$-pairs is equivalent to that of $(\varphi,\Gamma_K)$-modules over the Robba ring over $E$. This allows one to transport the theory of slopes from $\varphi$-modules to $\BPKE$-pairs, and in particular to speak of pure (or isoclinic) $\BPKE$-pairs and of Dieudonné--Manin filtrations for $\BPKE$-pairs. We refer to \cite{kedmon} for the relevant definitions.

Given a $\BPKE$ pair $W$ and finite extensions $L/K$ and $F/E$, we can define a $B\vert_L^{\otimes F}$-pair as $(F\otimes_EW)\vert_{G_L}$, with the obvious notations. Given a property $\mathbb P$ of a linear or semilinear representation of $G_K$, or of a $\BPKE$-pair, we say that one such object $W$ has $\mathbb P$ \emph{potentially} if there is a finite extension $L/K$ such that $W\vert_{G_L}$ has $\mathbb P$.

We denote by $\Rep_E(G_K)$ the category of continuous, $E$-linear, finite-dimensional representation $V$ of $G_K$. For an object $V$ of $\Rep_E(G_K)$ we denote by $W(V)$ the $\BPKE$-pair $(\bB_{e,E}\otimes_EV,\bB_{\dR,E}^+\otimes_EV)$. The rank of $W(V)$ is equal to the $E$-rank of $V$. Given two objects $V$, $V^\prime$ of $\Rep_{E}(G_K)$ and a morphism $f\colon V\to V^\prime$, we define a morphism $W(f)\colon W(V)\to W(V^\prime)$ by $\bB_{e,E}$-linearly extending $f$ to the first element of $W(V)$ and $\bB_{\dR,E}^+$ linearly to the second. The functor $W(\cdot)$ defined this way is fully faithful and identifies $\Rep_{E}(G_K)$ with the full tensor subcategory of the category of $\BPKE$-pairs whose objects are the pure $\BPKE$-pairs of slope $0$). This is \cite[Théorème 3.2.3]{berconstr} when $E=\Q_p$ and an immediate consequence of it for general $E$. 

\begin{defin}\label{Bpadm}
A $\bB_{e,E}$-representation $W_e$ is crystalline, semistable, or de Rham if $\bB_{?,E}\otimes_{\bB_{e,E}}W_e$ is trivial for $?=\cris$, $\st$, or $\dR$, respectively. A $B_\dR^+$-representation is Hodge--Tate if $\bB_\HT\otimes_{\C_p}(W_\dR^+/tW_\dR^+)$ is trivial. 

A $\BPKE$-pair $(W_e,W_\dR^+)$ is crystalline, semistable, or de Rham if $W_e$ is crystalline, semistable, or de Rham, respectively. It is Hodge--Tate if $W_\dR^+$ is Hodge--Tate.

An $E$-linear representation $V$ of $G_K$ is crystalline, semistable, Hodge--Tate or de Rham if $\bB_{?,E}\otimes_EV$ is trivial for $?=\cris,\st,\HT$ or $\dR$, respectively. 
\end{defin}

It is clear that an $E$-linear representation of $G_K$ is crystalline, semistable, Hodge--Tate or de Rham if and only if the associated $\BPKE$-pair has the same property. 
Recall that the properties of being de Rham and potentially semistable are equivalent for a $\BPKE$-pair by the $p$-adic monodromy theorem \cite[Théorème 2.3.5]{berconstr}. 

To a continuous character $\eta\colon K^\times\to E^\times$, Nakamura attaches a $\BPKE$-pair 
\[ W(\eta)=(\bB_{e,E}(\eta),\bB_{\dR,E}(\eta)), \]
and proves that every $\BPKE$-pair of rank $1$ is isomorphic to $W(\eta)$ for some $\eta$ \cite[Theorem 1.45]{nakclass}. Via Berger's equivalence between the categories of $\BPKE$-pairs and $(\varphi,\Gamma_{K})$-modules over the Robba ring over $E$, Nakamura's classification is a natural generalization to arbitrary coefficients of that given by Colmez in the case $K=\Q_p$ \cite[Proposition 3.1]{colmez}. Note that the $\BPKE$-pair $W(\eta)$ is of slope $0$ if and only if the character $\eta$ can be extended to a Galois character $G_K\cong\widehat{K^\times}\to E^\times$, where the first isomorphism is given by the reciprocity map of local class field theory. In such a case, $W(\eta)$ is simply $(\bB_{e,E}\otimes_EE(\eta),\bB_{\dR,E}\otimes_EE(\eta))$. In particular, this notation is compatible with the notation $\bB(\eta)$ introduced in the beginning of the section. 

We introduce the standard terminology for $\BPKE$-pairs that can be obtained via successive extensions of $\BPKE$-pairs of rank $1$.


\begin{defin}\label{Bptri}
A $\BPKE$-pair $W$ is \emph{split triangulable} if there exists a filtration 
\[ 0=W_0\subset W_1\subset\ldots\subset W_n=W \]
where, for every $i\in\{0,\ldots,n\}$, $W_i$ is a saturated sub-$\BPKE$-pair of $W$ of rank $i$. 
If $W_i/W_{i-1}\cong W(\delta_i)$ for $i\in\{1,\ldots,n\}$ and characters $\delta_i\colon K^\times\to E^\times$, then we say that $W$ is \emph{split triangulable with ordered parameter} $\udelta=(\delta_1,\delta_2,\ldots,\delta_n)\colon K^\times\to (E^\times)^n$. 

A $\BPKE$-pair $W$ is triangulable if there is a finite extension $F$ of $E$ such that the $B\vert_K^{\otimes F}$ pair $F\otimes_EW$ is split triangulable. 

An object $V$ of $\Rep_E(G_K)$ is (split) trianguline if $W(V)$ is (split) triangulable.
\end{defin}

\noindent We will use the adjective ``potentially'' in front of the above properties with its usual meaning. Note that some references call ``triangulable'' what we call ``split triangulable''.

The condition about the $W_i$ being saturated in $W$ is not very serious: one can replace each $W_i$ with its saturation in $W$ and obtain this way a filtration where each step is saturated.

\smallskip

\subsection{Main result on potentially trianguline \texorpdfstring{$B$}{B}-pairs}

Let $K$ and $E$ be two $p$-adic fields. 
Let $\bB$ be an $(E,G_K)$-regular ring in the sense of \cite[Definition 2.8]{fonouy} (for instance, $\bB=\bB_{?,E}$ with $?\in\{\HT,\dR,\st,\crys\}$). 

\begin{lemma}\label{tannlist}
The following full subcategories of $\Rep_E(G_K)$ are neutral Tannakian:
\begin{enumerate}
\item the category $\Rep_E^\bB(G_K)$ of $E$-linear representations of $G_K$ that are (potentially) $\bB$-admissible up to twist by a character of $G_K$; 
\item the categories ($\Rep_E^{\stri}(G_K),\Rep_E^{\ptri}(G_K),\Rep_E^{\pstri}(G_K)$) $\Rep_E^{\ptri}(G_K)$  of (split, potentially, potentially split) trianguline $E$-linear representations of $G_K$. 
\end{enumerate}
\end{lemma}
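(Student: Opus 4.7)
The strategy is to verify directly that each category in the statement is closed under direct sums, subquotients, tensor products, and duals, and contains the trivial representation. Since each is by construction a full subcategory of the neutral Tannakian category $\Rep_E(G_K)$, the forgetful fiber functor on the latter restricts to a fiber functor on the subcategory; once stability under these operations is established, the neutral Tannakian structure is automatic. Both categories clearly contain $E$ with its trivial $G_K$-action.

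For part (i), I would rely on standard consequences of $(E,G_K)$-regularity of $\bB$: the category of $\bB$-admissible $E$-linear representations is already a neutral Tannakian subcategory of $\Rep_E(G_K)$, and in particular is stable under the four operations. Enlarging to objects that become $\bB$-admissible \emph{after a twist by a continuous character} $G_K\to E^\times$ preserves all four operations, since characters form a group under tensor product and inversion and ``admissibility up to twist'' is a property of an isomorphism class invariant under the action of this group. The potentially admissible version then follows from the observation that only finitely many finite extensions of $K$ are ever needed to simultaneously trivialize finitely many representations and their tensor constructions, and the compositum of finitely many finite extensions of $K$ is finite.

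For part (ii), I would transfer the problem to the category of $\BPKE$-pairs via the fully faithful, exact tensor functor $V\mapsto W(V)$, which identifies $\Rep_E(G_K)$ with the full tensor subcategory of pure slope $0$ $\BPKE$-pairs. Stability in the split trianguline case is then established by induction on rank using the given filtration $0=W_0\subset\cdots\subset W_n=W(V)$ by saturated sub-$\BPKE$-pairs $W_i$ of rank $i$, with successive quotients $W_i/W_{i-1}$ of rank $1$. For a subrepresentation $V^\prime\subset V$, the saturations of $W(V^\prime)\cap W_i$ inside $W(V^\prime)$ form a filtration whose successive quotients embed into $W_i/W_{i-1}$ and are therefore of rank $0$ or $1$; discarding the zero steps yields a split triangulation of $W(V^\prime)$. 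Quotients are handled dually by pushing forward the $W_i$ to $W(V/V^\prime)$. For a tensor product, writing $0\to X\to W(V)\to Y\to 0$ with $X$ of rank $1$ and tensoring with a split triangulable $W(V^\prime)$ produces a short exact sequence whose outer terms are split triangulable---the first since tensoring any split triangulable pair with a rank $1$ pair stays split triangulable, the second by induction on rank---and extensions of split triangulable pairs are split triangulable by concatenating filtrations. Duals are immediate: the reverse of the dual of the filtration gives a split triangulation of $W(V)^\vee$ with parameters the inverses of the original ones. The non-split triangulable case reduces to the split case after extending $E$ to a finite extension $F$, while the potentially (split) trianguline cases reduce after extending $K$ to a finite extension $L$; in both cases only finitely many extensions are needed for a finite collection of objects and a given operation, and the compositum of finitely many finite extensions is finite.

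The step requiring the most care is stability under subobjects in the split trianguline case, where one must keep track of saturations at every stage of the induction: the crucial input is that a saturated sub-$\BPKE$-pair of a rank $1$ $\BPKE$-pair is either zero or the whole thing, so that each intersected successive quotient is of rank at most $1$. Once this bookkeeping is carried out, stability under quotients is dual, stability under tensor products falls by induction on rank from the rank $1$ case, and stability under duals is purely formal. The reductions to finite extensions of $K$ or $E$ for the non-split and potential variants are then routine.
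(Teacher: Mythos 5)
Your proposal follows exactly the strategy of the paper's own (much terser) proof: since $\Rep_E(G_K)$ is neutral Tannakian, one only needs closure of each subcategory under direct sums, subquotients, tensor products and duals, with part (i) delegated to the standard consequences of $(E,G_K)$-regularity and part (ii) checked directly on $\BPKE$-pairs. The details you supply for part (ii) --- the saturation bookkeeping for subobjects, induction on rank for tensor products, reversing the dual filtration, and passing to finite extensions of $K$ or $E$ for the potential and non-split variants --- are correct and in fact more complete than the paper's one-line assertion that these verifications are easy.
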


Note that the categories in (ii) are all stable under twisting by $E$-linear characters of $G_K$.

\begin{proof}
Since $\Rep_E(G_K)$ is a neutral Tannakian category, it is enough to check that the categories in (i) and (ii) are stable under direct sums, taking subquotients, tensor products and duals, where all these operations are intended in $\Rep_E(G_K)$.
Proving this for $\Rep_E^\bB(G_K)$ a minor variation on \cite[Theorem 2.13(2)]{fonouy}. 
As for the categories of trianguline representations, one can check easily their stability under all the operations listed above. 
\end{proof}

If $W$ is a trianguline representation and $\langle W \rangle$ is the subcategory of $\Rep_E(G_K)$ tensor generated by $W$, we can define in a canonical way a triangulation of an object in $\langle W\rangle$ starting from a triangulation of $W$. For this we refer to Remark \ref{remsem} and the discussion preceding it.

\begin{rem}\mbox{ }
\begin{itemize}
\item Lemma \ref{tannlist}(2) is a special case of (1): if $\cD\subset\cC$ is an inclusion of neutral Tannakian categories with $\cD$ full in $\cC$ and associated morphism of fundamental groups $\pi\colon\cG_\cC\to\cG_\cD$, then following \cite[Section 2.2]{fontainepst} we can consider the affine algebra $B_{\cD,\alg}$ of $\cG_\cD$, that carries an action of $\cG_\cD$ by left translation, hence of $\cG_\cC$ via $\pi$. Then the objects of $\cD$ are the $B_{\cD,\alg}$-admissible ones in $\cC$. 
We still write (1) and (2) separately because such a period ring has not been studied in the literature as far as the author knows. 
\item In (ii) one can fix the extension of $K$, respectively $E$, over which the $\BPKE$-pairs become triangulable, respectively split, and still get a neutral Tannakian category by the same argument as the given one.
\item One could think of defining a category of couples consisting of a trianguline $E$-linear representation of $G_K$ and a triangulation of the associated $\BPKE$-pair, and make it into a tensor category by means of the argument in the proof of Lemma \ref{tannlist}(2); however one runs into the same problems that make the category of filtered vector spaces non-abelian.
\end{itemize}
\end{rem}

\begin{rem}\label{notann}
The Tannakian categories of Lemma \ref{tannlist} are categories of Galois representations rather than $B$-pairs. For this reason, in the proof of Theorem \ref{ptri} we will first reduce the statements to the case when $W$ is pure of slope $0$: the category of such $\BPKE$-pairs is neutral Tannakian because it is equivalent to that of continuous $E$-representations of $G_K$. The reason we cannot work directly with $B$-pairs is that the Tannakian category of $\BPKE$-pairs is not neutral in general (see for instance \cite[Section 10.1.2]{thecurve}), so it does not fit in the framework of the previous section. This is also the reason why we can only prove statements (ii) and (iii) when $W$ is pure, since then we can, up to extending $E$, find a slope $0$ modification of $W$. Note however that (2)$\implies$(1) and part (iii) with the divisibility condition replaced by $\ell(\uu)=\ell(\uv)$ hold for arbitrary $W$ (not necessarily pure) by virtue of Remark \ref{oldarg}.
\end{rem}

For later use, we introduce some more neutral Tannakian categories. As usual, if $\textbf P$ is a property of $\BPKE$-pairs, we say that an $E$-linear representation of $G_K$ has property $\textbf P$ if the associated $\BPKE$-pair has it.

\begin{defin}\label{qreg}\mbox{ }
	\begin{enumerate}
	\item Let $\sigma$ be an embedding of $E$ into $\Qp$. We say that a $\BPKE$-pair is 
	\emph{$\sigma$-regular} if its $\sigma$-Hodge--Tate--Sen weights are all distinct.
	\item Let $\Rep_E^{\tri,\sigma-\reg}(G_K)$ (respectively $\Rep_E^{\ptri,\sigma-\reg}(G_K)$) be the smallest full Tannakian subcategory of  $\Rep_E^{\tri}(G_K)$ (respectively $\Rep_E^{\ptri}(G_K)$) containing all the (respectively, potentially) trianguline, $\sigma$-regular representations. 
	\item We say that a split triangulable $\BPKE$-pair, 
	of parameters $\delta_1,\ldots,\delta_n$, is \emph{quasi-regular} if there exists a triangulation of $V$ and an embedding $\sigma\colon E\to\Q_p$ for which the following holds: if the $\sigma$-Hodge--Tate--Sen weights of $\delta_i$ and $\delta_j$ coincide for some $i,j\in\{1,\ldots,\dim V\}$, then the $\tau$-Hodge--Tate--Sen weights of $\delta_i$ and $\delta_j$ coincide for every embedding $\tau\colon E\into\Qp$. 
	We say that a potentially trianguline $\BPKE$-pair 
	is quasi-regular if it becomes split triangulable and quasi-regular after replacing $K$ and $E$ by finite extensions.
	\item Let $\Rep_E^{\tri,\qreg}(G_K)$ (respectively $\Rep_E^{\ptri,\qreg}(G_K)$) be the smallest full Tannakian subcategory of  $\Rep_E^{\tri}(G_K)$ (respectively $\Rep_E^{\ptri}(G_K)$) containing 
	all the (respectively, potentially) trianguline quasi-regular representations. 
\item As in \cite[Introduction]{berdimtri}, we say that a $\BPKE$-pair is \emph{split $\Delta(\Q_p)$-triangulable} if it admits a triangulation whose rank 1 subquotients are all restrictions of $\BPQE$-pairs to $G_K$. 
	\end{enumerate}
	\end{defin}

\begin{rem}\mbox{ }
	\begin{enumerate}
	\item All of the categories of Galois representations (not $B$-pairs) introduced in Definition \ref{qreg} are automatically neutral. 
	\item For every $\sigma$, $\Rep_E^{\tri,\sigma-\reg}(G_K)$ (respectively, $\Rep_E^{\ptri,\sigma-\reg}(G_K)$) is a Tannakian subcategory of $\Rep_E^{\tri,\qreg}(G_K)$ (respectively, $\Rep_E^{\ptri,\qreg}(G_K)$): any $\sigma$ for which all of the $\sigma$-Hodge--Tate--Sen weights of a (respectively, potentially) trianguline $E$-representation $V$ of $G_K$ are distinct makes the quasi-regularity condition empty.
	\item Not all objects of $\Rep_E^{\ptri,\sigma-\reg}(G_K)$ (respectively, $\Rep_E^{\ptri,\qreg}(G_K)$) are $\sigma$-regular (respectively, quasi-regular). For instance, a direct sum of two copies of the same $\sigma$-regular representation is not $\sigma$-regular. However, the $\sigma$-regular representations (respectively, the quasi-regular representations) form by definition a tensor generating set of $\Rep_E^{\ptri,\sigma-\reg}(G_K)$ (respectively, $\Rep_E^{\ptri,\qreg}(G_K)$).
	\item Every potentially $\Delta(\Q_p)$-triangulable $\BPKE$-pair is quasi-regular: since all the rank 1 subquotients appearing in the triangulation are restrictions of $\BPQE$-pairs, their Hodge--Tate--Sen weights are uniquely determined by their $\sigma$-Hodge--Tate--Sen weight for a single embedding $\sigma\colon E\into\Qp$; in particular, when two of them share the same $\sigma$-Hodge--Tate--Sen weight for one $\sigma$, they share it for every $\sigma$.
	\item Every potentially trianguline $E$-representation of $G_K$ of dimension at most 3 belongs to $\Rep_E^{\ptri,\qreg}(G_K)$. The author does not have an explicit example of an $E$-representation of $G_K$ that does not belong to $\Rep_E^{\ptri,\qreg}(G_K)$. 
	\end{enumerate}
	\end{rem}

We apply the abstract Tannakian results of Section \ref{pbschur} to some of the categories we introduced above, in order to prove the following theorem. 

\begin{thm}\label{ptri}
	Let $W$ be a quasi-regular $\BPKE$-pair and let $n=\rk W$. 
	\begin{enumerate}[label=(\roman*)]
		\item Assume that either
		\begin{enumerate}[label=(\arabic*)]
			\item there exists a $\BPKE$-pair $W^\prime$ such that $W\otimes_EW^\prime$ is triangulable, or
			\item there exists a tuple $\uu$ with $\length(\uu)<n$ such that $\bS^\uu(W)$ is triangulable.
		\end{enumerate}
		Then $W$ is potentially triangulable. 
	\end{enumerate}
	Moreover, if $W$ is pure (in the sense of the theory of slopes) then:
	\begin{enumerate}[resume]
		\item Conditions (1) and (2) of part (i) are equivalent.
		\item If condition (2) of part (i) holds for some tuple $\uu$, then it holds for all tuples $\uv$ satisfying $\gcd(\ell(\uu),n)\mid\ell(\uv)$. 
	\end{enumerate}
\end{thm}

\begin{rem}
	One can obviously weaken ``triangulable'' in assumptions (1) and (2) of part (i) to ``potentially triangulable''.
\end{rem}

The first part of our result contains as a special case a theorem 
of Berger and Di Matteo \cite[Theorem 5.4, Remark 5.6]{berdimtri}, where it is shown that $W$ is potentially triangulable under the following strengthened form of assumption (1): there exists a $\BPKE$-pair $W^\prime$ such that $W\otimes_EW^\prime$ admits a triangulation whose rank $1$ subquotients have an associated $\bD_\dR$ which is free over $K\otimes_{\Q_p}E$ (in other words, these subquotients cannot be ``partially de Rham'', in the sense of \cite{dingpart}, without being de Rham; this is the case, for instance, if they are restrictions to $G_K$ of $\BPQE$-pairs). After their Remark 5.6, they also provide a counterexample showing that the ``triangulable'' in the conclusion of part (i) of Theorem \ref{ptri} cannot be removed.

Thanks to the results of Section \ref{subsecsc}, we can deduce the following result by specializing Theorem \ref{ptri} to the case of $\BPKE$-pairs of slope 0. Let $\calC$ be the neutral Tannakian category $\Rep_E(G_K)$, and let $\calD$ be the subcategory consisting of the representations that are quasi-regular and potentially trianguline.
Let $\ovl\calD$ be the intermediate category constructed from the inclusion $\calD\subset\calC$, as in Section \ref{tpsub}.

\begin{cor}
	The categories $\calD$ and $\ovl\calD$ coincide.
	\end{cor}

Unfortunately we cannot apply Corollary \ref{ovlDsc} to obtain that $G_{\calD}^\sms$ is simply connected, since $G_\calC^\sms$ is not. For instance, if the residue field of $E$ has $q$ elements, take a multiple $n$ of $q-1$ and a tamely ramified, non-unramified continuous character $\chi\colon G_K\to E^\times$. Consider the injection $f\colon E^\times\to\GL_n(E)$ that maps $e\in E$ to the diagonal element $(e,1,\ldots,1)$, and let $P\chi$ be the representation $G_K\to\PGL_n$ obtained by composing $f\ccirc\chi$ with the projection $\GL_n\to\PGL_n$. If $G_\calC^\sms$ were simply connected, then the representation $P\chi$ would admit a lift along the central isogeny $\SL_n\to\PGL_n$, which is not the case.

Observe that the fact that $\chi$ as above cannot be lifted along $\G_m\to\G_m$, $t\mapsto t^n$, also shows that $G_\calC^\circ$ admits non-trivial central isogenies from connected pro-algebraic groups that are trivial on $G_\calC^\sms$.

\smallskip

\subsection{Crystalline \texorpdfstring{$B$}{B}-pairs}\label{pcrys}

Assume from now on that $E=E^\Gal\subset K$, so that $E_0\subset K_0$. Note that this is the opposite inclusion as one usually asks for in $p$-adic Hodge theory, and we will assume later that $E=E^\Gal=K$. The inclusion $E\subset K$ will guarantee that all of the morphisms of period rings over $E$ that we look at are $G_K$-equivariant, instead of some of them being only $G_E$-equivariant (such as the maps \eqref{pisigma}). One could probably avoid making the assumption and adapt the action of $G_K$ in order to make everything $G_K$-equivariant; however, since in this section we only want to deal with properties of $\BPKE$-pairs being \emph{potentially} true, we are not worried about having to replace $K$ with a finite extension.

Let $\bB$ be an $E$-algebra carrying a $G_K$-action. Set $\bB_E=\bB\otimes_{\Q_p}E$ and extend \emph{$E$-linearly} the action of $G_K$ from $\bB$ to $\bB_E$. 
For every $\sigma\in\Gal(E/\Q_p)$, we denote by $\bB^\sigma$ the ring $\bB$ equipped with the $E$-algebra structure obtained by pre-composing the inclusion $E\subset\bB$ with $\sigma$. Such a structure map is $G_K$-equivariant because $E\subset K$. There is an $E$-linear isomorphism $\bB^\sigma=\bB_E\otimes_{K\otimes_{\Q_p}E,\pi^\sigma}K$, and we denote again by $\pi^\sigma$ the resulting morphism $\bB_E\to\bB^\sigma$. 
We put them together to obtain an $E$-linear, $G_K$-equivariant isomorphism
\begin{equation}\label{decB}
\bigoplus_{\sigma\colon E\to K}\pi^\sigma\colon\bB_E\xto{\sim}\bigoplus_{\sigma\colon E\to K}\bB^\sigma.
\end{equation}

Given a semilinear $\bB_E$-representation $W_{\bB_E}$ of $G_K$, tensoring \eqref{decB} with $W_{\bB_E}$ we obtain an isomorphism
\begin{equation}\label{decW}
\bigoplus_{\sigma\in\Gal(E/\Q_p)}\pi^\sigma\colon W_{\bB_E}\xto{\sim}\bigoplus_{\sigma\in\Gal(E/\Q_p)}W_{\bB_E}\otimes_{\bB_E,\pi^\sigma}\bB ,
\end{equation}
where each factor on the right is a semilinear $\bB$-representation of $G_K$. We write $W_\bB^\sigma=W_{\bB_E}\otimes_{\bB_E,\pi^\sigma}\bB$; it is a $\bB^\sigma$-representation of $G_K$. 
When applying decomposition \eqref{decB} we will write $\pi^\sigma$ for the maps there without specifying the relevant $\bB$ or $W_{\bB_E}$; it will always be evident what we are referring to. The notation $\pi^\sigma$ will be used for quite a few morphisms in the following, all related to decomposition \eqref{decW}. We believe this will avoid adding burdens to the notation without creating any confusion.

\begin{defin}\label{sigmadRdef}
We say that a $\BPKE$-pair $(W_e,W_\dR^+)$ is \emph{$\sigma$-$\C_p$-admissible}, respectively \emph{$\sigma$-Hodge--Tate}, if $\C_p^\sigma\otimes_{\C_p\otimes_{\Q_p}E,\pi^\sigma}(W_\dR^+/tW_\dR)$, respectively $\bB_\HT^\sigma\otimes_{\bB_{\HT,E},\pi^\sigma}(W_\dR^+/tW_\dR^+)$, is trivial.

We say that a $\bB_{e,E}$-representation $W_e$ of $G_K$ is \emph{$\sigma$-de Rham} if $\bB_\dR^\sigma\otimes_{\bB_{\dR,E},\pi^\sigma}(\bB_{\dR,E}\otimes_{\bB_{e,E}}W_e)$ is trivial. 
 
We say that a $\BPKE$-pair $(W_e,W_\dR^+)$ is \emph{$\sigma$-de Rham} if $W_e$ is.

We say that a continuous $E$-linear representation of $G_K$ has one of the above properties if the associated $\BPKE$-pair does.
\end{defin}

It is equivalent to the last part of the definition to say that an $E$-linear representation $V$ of $G_K$ is $\sigma$-$\C_p$-admissible, $\sigma$-Hodge--Tate or $\sigma$-de Rham if and only if it is $\C_p^\sigma$, $\bB_\HT^\sigma$, or $\bB_\dR^\sigma$-admissible, respectively. When $K=E$ these notions coincide with those introduced in \cite{dingpart}; in the general case they are still completely analogous to those in \emph{loc. cit.} apart from the fact that our $\sigma$ is an automorphism of $E$, whereas in there it is an embedding $K\into\C_p$ (in some sense, we are decomposing our semilinear objects in different directions).

Let $f$ be the inertial degree of $E_0$ over $\Q_p$. Define an endomorphism $\varphi_E$ of $E\otimes_{E_0}\bB_\st$ as $1\otimes\varphi^f$, and denote again with $\varphi_E$ its restriction to $E\otimes_{E_0}\bB_\crys$. We extend $E$-linearly the action of $G_K$ on $\bB_\st$ and $\bB_\cris$ to $E\otimes_{E_0}\bB_\st$ and $E\otimes_{E_0}\bB_\crys$ (recall that $E_0\subset K_0$). The actions of $G_K$ and $\varphi_E$ commute on both rings, and they can be extended to their fields of fractions in the obvious way.

We choose once and for all an extension $\log$ of the $p$-adic logarithm from a map $1+\fm_{\C_p}\to\fm_{\C_p}$ to a map $\C_p^\times\to\C_p$, setting in particular $\log(p)=0$. This choice determines an embedding $\bB_\st\into \bB_\dR$, that is fixed throughout the text. We denote by $E\bB_\crys$ the subring of $\bB_\dR$ generated by $E$ and $\bB_{\crys}$, and by $E\bB_\st$ the subring of $\bB_\dR$ generated by $E$ and $\bB_\st$. 
Similarly to \cite[Section 2]{berdimtri}, we attach to every $\sigma\in\Gal(E/\Q_p)$ two $G_K$-equivariant isomorphisms
\[ \sigma\otimes\varphi^{n(\sigma)}\colon E\otimes_{E_0}\bB_{\crys,E}\to E\bB_\crys \]
and
\[ \sigma\otimes\varphi^{n(\sigma)}\colon E\otimes_{E_0}\bB_{\st,E}\to E\bB_\st, \]
where $n(\sigma)$ is the element of $\{0,\ldots,f-1\}$ such that $\sigma=\varphi^{n(\sigma)}$ on $E_0$. We use again the notation $\pi^\sigma$ for these isomorphisms; it will not create any ambiguity. For every $\sigma\in\Gal(E/\Q_p)$ we denote by $t_\sigma$ the element of $E\bB_\crys$ constructed in \cite[Section 5]{bermult} (see also \cite[Proposition 2.4]{berdimtri}). One has $t_\sigma=\pi^\sigma(t_\Id)$ for every $\sigma$. 
We define Frobenius operators on $E\bB_\crys$ and $E\bB_\st$ by transporting $\varphi_E$ via the above isomorphisms, and we still denote them by $\varphi_E$.

Observe that $(E\bB_\crys)\otimes_{\Q_p}E\cong E\otimes_{E_0}(\bB_\crys\otimes_{\Q_p}E)$, where the tensor product over $E_0$ is taken with respect to the $E_0$-vector space structure of $\bB_\crys$. 

We recall the following observation of Berger and Di Matteo.

\begin{lemma}\label{BeEphi}\cite[Proposition 2.2]{berdimtri} 
The composite map $\bB_{e,E}\into \bB_{\crys,E}\onto E\otimes_{E_0}\bB_\crys$ gives an identification
\begin{equation}\label{eEcrys} \bB_{e,E}=(E\otimes_{E_0}\bB_\crys)^{\varphi_E=1}. \end{equation}
\end{lemma}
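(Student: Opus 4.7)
The plan is to reduce the lemma to a computation of Frobenius-invariants after decomposing $\bB_{\crys,E}$ into isotypic pieces indexed by the embeddings $E_0 \hookrightarrow E$. Since $E$ is flat over $\Q_p$ and $\bB_e = \bB_\crys^{\varphi=1}$, we have $\bB_{e,E} = (\bB_{\crys,E})^{\varphi \otimes 1 = 1}$. It therefore suffices to show that the natural projection $\pi \colon \bB_{\crys,E} \twoheadrightarrow E\otimes_{E_0}\bB_\crys$ appearing in the statement restricts to an isomorphism $(\bB_{\crys,E})^{\varphi\otimes 1=1} \xrightarrow{\sim} (E\otimes_{E_0}\bB_\crys)^{\varphi_E=1}$.

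To unpack $\pi$, I will use that $E_0\otimes_{\Q_p}E \cong \prod_{\tau\colon E_0\hookrightarrow E} E$, where the product runs over the $f=[E_0:\Q_p]$ embeddings; since $E_0\subset\bB_\crys$, this induces a decomposition $\bB_{\crys,E} = \bB_\crys\otimes_{E_0}(E_0\otimes_{\Q_p}E)\cong\bigoplus_\tau\bB_\crys\otimes_{E_0,\tau}E$. Writing $\tau_i=\tau_0\circ\varphi^i$ for the embeddings (with $\tau_0$ the inclusion), the map $\pi$ is identified with projection onto the $\tau_0$-summand, which is precisely $E\otimes_{E_0}\bB_\crys$.

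The key step is to describe $\varphi\otimes 1$ in terms of this decomposition. Tracking how $\varphi$ acts on the idempotents $e_{\tau_i}\in E_0\otimes_{\Q_p}E$ via the relation $\varphi(e_{\tau_i})=e_{\tau_{i-1}}$ (a consequence of the fact that $\varphi$ restricts to the arithmetic Frobenius on $E_0$), I will show that $\varphi\otimes 1$ maps the $\tau_i$-summand to the $\tau_{i-1}$-summand via $a\otimes b\mapsto\varphi(a)\otimes b$. A $\varphi\otimes 1$-fixed element then corresponds to a tuple $(y_i)_i$ satisfying $y_j=\varphi(y_{j+1})$ for all $j$ (indices mod $f$), and such a tuple is determined by $y_0$ together with the compatibility $y_0=\varphi^f(y_0)$, i.e., $y_0\in(E\otimes_{E_0}\bB_\crys)^{\varphi_E=1}$, since $\varphi_E=1\otimes\varphi^f$ on the $\tau_0$-summand corresponds exactly to $\varphi^f\otimes 1$.

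This immediately yields injectivity of the restricted projection, and surjectivity comes from the reconstruction $y_0\mapsto(\varphi^{-j}(y_0))_{j=0}^{f-1}$. The main technical point is that this reconstruction requires $\varphi$ to be bijective on $\bB_\crys$; I expect this to be the only real obstacle, and it is handled by recalling that Frobenius is bijective on the perfect tilt from which $\bB_\crys$ is constructed (e.g.\ $W(\widetilde{E}^+)$), and that inverting $t$ does not spoil this since $\varphi(t)=pt$ is a unit in $\bB_\crys$. Once bijectivity is in hand, the argument becomes an essentially formal descent through the decomposition.
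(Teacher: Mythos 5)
The paper offers no proof of this lemma --- it is quoted directly from Berger--Di Matteo --- so the only question is whether your argument is sound. Your route is the standard and correct one: flatness of $\Q_p\to E$ gives $\bB_{e,E}=(\bB_{\crys,E})^{\varphi\otimes1=1}$; the idempotents of $E_0\otimes_{\Q_p}E\cong\prod_{\tau}E$ split $\bB_{\crys,E}$ into $f$ summands that $\varphi\otimes1$ permutes cyclically; the surjection onto $E\otimes_{E_0}\bB_\crys$ is the projection to the identity component; and a $\varphi\otimes1$-fixed element is a tuple $(z_j)_j$ with $z_j=\varphi(z_{j+1})$, hence is determined by its $\tau_0$-component $z_0$, which must satisfy $\varphi^f(z_0)=z_0$, i.e.\ $\varphi_E(z_0)=z_0$.

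The flaw is the claim, which you single out as the main technical point, that $\varphi$ is bijective on $\bB_\crys$. It is not: $\bB_\crys$ is not $W(\cO_{\C_p}^\flat)[1/p,1/t]$ but $\mathbf{A}_\crys[1/p,1/t]$, where $\mathbf{A}_\crys$ is the $p$-adically completed divided-power envelope of $W(\cO_{\C_p}^\flat)$, and on $\bB_\crys^+$ the Frobenius is injective (Fontaine) but not surjective --- this is precisely why $\bB_\rig^+=\bigcap_{n\ge0}\varphi^n(\bB_\crys^+)$ is a proper subring. So your reconstruction $y_0\mapsto(\varphi^{-j}(y_0))_j$ is not licit as written. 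Fortunately the step does not need inverse Frobenius: reading the relations $z_j=\varphi(z_{j+1})$ around the cycle starting from $z_{f-1}=\varphi(z_0)$ gives $z_j=\varphi^{f-j}(z_0)$ for $j=1,\dots,f-1$, together with the single closing constraint $\varphi^f(z_0)=z_0$. Thus both injectivity and surjectivity of the restricted projection are obtained by applying only \emph{positive} powers of $\varphi$: given $y_0$ with $\varphi_E(y_0)=y_0$, set $z_j=(\varphi^{f-j}\otimes1)(y_0)$ and verify the fixedness relations directly. With that substitution in place of the bijectivity claim, your proof is complete.
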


We will always consider $\bB_{e,E}$ as a subring of $E\bB_\crys$ and of $E\bB_\st$ via \eqref{eEcrys}. In particular, for every $\sigma\in\Gal(E/\Q_p)$ there are maps
\begin{equation}\label{pisigma} \bB_{e,E}\into E\otimes_{E_0}\bB_\crys\xto{\sigma\otimes\varphi^{n(\sigma)}}E\bB_\crys\into E\bB_\st\into \bB_\dR. 
\end{equation}
Given $\bB\in\{E\bB_\crys, E\bB_\st, \bB_\dR\}$, we write $\bB^\sigma$ for the ring $\bB$ equipped with the $\bB_{e,E}$-module structure arising from the maps in \eqref{pisigma}. The resulting $E$-algebra structure map, given by the composite $E\to \bB_{e,E}\to\bB$, is the same as that introduced before Equation \eqref{decB}, so that there are no conflicts in the notation. 

\begin{defin}\label{defsigmacris}
We say that a $\bB_{e,E}$-representation $W_e$ of $G_K$ is \emph{$\sigma$-crystalline}, respectively \emph{$\sigma$-semistable}, if $E\bB_\crys^\sigma\otimes_{\bB_{e,E}}W_e$, respectively $E\bB_\st^\sigma\otimes_{\bB_{e,E}}$, is trivial.

We say that a $\BPKE$-pair $(W_e,W_\dR^+)$ is \emph{$\sigma$-crystalline} or \emph{$\sigma$-semistable} if $W_e$ has the respective property.

We say that a continuous $E$-linear representation of $G_K$ has one of the above properties if the associated $\BPKE$-pair does.
\end{defin}

We remark that Ding defines in \cite{dingpder} a notion of $B_\sigma$-pair for every embedding $\sigma\colon K\into\C_p$, and attaches to a $B$-pair $W=(W_e,W_\dR^+)$ a $B_\sigma$-pair $W_\sigma=(W_{e,\sigma},W_{\dR,\sigma}^+)$ for each $\sigma$. When $K=E$, a $\BPKE$-pair $W$ is $\sigma$-crystalline in our sense if and only if Ding's $W_{e,\sigma}$ becomes trivial after extending its scalars to $E\bB^\sigma_\crys$.

We extend the monodromy operator $N$ on $\bB_{\st}$ to an $E$-linear nilpotent operator $N_E$ on $E\bB_\st$.
Since $E\bB_\crys^\sigma=(E\bB_\st^\sigma)^{N_E=0}$, a $\bB_{e,E}$-representation $W_e$ of $G_K$ is $\sigma$-crystalline if and only if it is $\sigma$-semistable and the operator induced on $(\bB_\st^\sigma\otimes_{\bB_{e,E}}W_e)^{G_K}$ by $N_E$ is identically zero.

The filtration $(\Fil^i\bB_\dR)_{i\in\Z}$ on $\bB_\dR$ defined by $\Fil^i\bB_\dR=t^i\bB_\dR^+$ for $i\in\Z$ induces filtrations $(\Fil^iE\bB_\crys)_{i\in\Z}$ and $(\Fil^iE\bB_\st)_{i\in\Z}$ on $E\bB_\crys$ and $E\bB_\st$, respectively. The graded ring associated with $E\bB_\crys$, $E\bB_\st$ and $\bB_\dR$ is the same, $\bB_\HT$.

\smallskip

\subsection{Reminders on Fontaine's classification of \texorpdfstring{$\bB_\dR$}{BdR}-representations}

We recall Fontaine's classification of $\bB_\dR$-representations from \cite{fontarith} (recall that $\bB_\dR=\bB_{\dR,\Q_p}$, so that we are working with $E=\Q_p$ here). 

Let $C(\ovl K)$ (respectively $C(\ovl K/\Z)$) be the set of $G_K$-orbits in the additive group $\ovl K$ (respectively $\ovl K/\Z$). For $A\in C(\ovl K)$, let $K_A$ be the extension of $K$ generated by the elements of $A$. Let $d_A$ be the degree of $K_A/K$. Let $a$ be any element of $A$ and let $r_A$ be the smallest integer such that
\[ v_p(a\log(\chi_K^\cyc(\gamma))>\frac{1}{p-1} \]
for all $\gamma\in\Gamma_{K,r_A}$. Thanks to the previous inequality we can define a $1$-dimensional $K_A$-linear representation $\rho_A\colon\Gamma_{K,r_A}\to K_A^\times$ by setting
\begin{equation}\label{rhoa} \rho_A(\gamma)=\exp(a\log\chi_K^\cyc(\gamma)) \end{equation}
for every $\gamma\in\Gamma_{K,r_A}$. 
Now the induction
\[ N[A]=\Ind_{\Gamma_{K,r_A}}^{\Gamma_K}\rho_A. \]
is a $K_A$-linear representation of $\Gamma_K$ of dimension $p^{r_A}$. We see it as a $K$-linear representation of dimension $d_Ap^{r_A}$. We define a semilinear $K_\infty$-representation of $\Gamma$ by
\[ N_\infty[A]=K_\infty\otimes_KN[A], \]
where $\Gamma$ acts via its natural action on $K_\infty$ and diagonally on $N_\infty[A]$. It is not always the case that $N_\infty[A]$ is a simple object in the category of semilinear $K_\infty$-representations of $\Gamma$, but all of its simple factors are isomorphic. We choose one and denote it by $K_\infty[A]$. As proved in \cite[Proposition 2.13]{fontarith}, the dimension of $K_\infty[A]$ is $d_Ap^{s_A}$ for some integer $s_A$ with $0\le s_A\le r_A$. 

There exists no $G_K$-equivariant section of the projection $\bB_\dR\to\C_p$, but one can define a $G_K$-equivariant homomorphism $s\colon\ovl K\into \bB_\dR$ such that $\theta\ccirc s=\id_{\ovl K}$, as in \cite[Section 3.1]{fontarith} (what is noted $\ovl P$ there always contains $\ovl K$). In particular we have a $G_K$-equivariant section $K_\infty\to \bB_\dR$. We define a semilinear $\bB_\dR$-representation of $G_K$ by setting
\[ \bB_\dR[A]=\bB_\dR\otimes_{K_\infty}K_\infty[A], \]
where the tensor product is taken via the aforementioned section, $G_K$ acts via the projection $G_K\to\Gamma_K$ on $K_\infty[A]$ and diagonally on $\bB_\dR[A]$. By \cite[Proposition 3.18]{fontarith}, the isomorphism class of the $\bB_\dR$-representation $\bB_\dR[A]$ only depends on the image of $A$ in $C(\ovl K/\Z)$. For this reason we will also speak unambiguously of $\bB_\dR[A]$ when $A$ is an orbit in $C(\ovl K/\Z)$ rather than $C(\ovl K)$.

The construction above already gives all the simple objects in the category of semilinear $\bB_\dR$-representations of $G_K$. There exist however non-semisimple objects, that Fontaine also describes. Let $d\in\Z_{>0}$. Following \cite[Section 2.6]{fontarith}, denote by $\Z_p\{0;d\}$ the $\Z_p$-vector space of polynomials of degree smaller than $d$ in one variable $X$, equipped with the unique $\Z_p$-linear action of $G_K$ satisfying
\[ g(X)=X+\log\chi_K^\cyc(g) \]
for all $g\in G_K$. Note that this is the same as the action one would get by identifying $X$ with $\log{t}$, where $t$ is the usual generator of $\Fil^1\bB_\dR$. It is clear that $\Z_p\{0;d\}$ is given by successive extensions of $d$ trivial $1$-dimensional $\Z_p$-linear representations of $G_K$. 
Given $A\in C(\ovl K/\Z)$ and $d\in\Z_{>0}$, we define a semilinear $\bB_\dR$-representation of $G_K$ by
\[ \bB_\dR[A;d]=\bB_\dR[A]\otimes_{\Z_p}\Z_p\{0;d\}, \]
on which $G_K$ acts diagonally. This representation has dimension $dd_Ap^{s_A}$, and its simple subquotients are all isomorphic to $\bB_\dR[A]$.

By \cite[Théorème 3.19]{fontarith}, every semilinear $\bB_\dR$-representation of $G_K$ can be written in a unique way, up to permutation of the factors, as a direct sum of representations of the form $\bB_\dR[A;d]$ for some $A\in C(\ovl K/\Z)$ and $d\in\Z_{>0}$.


\smallskip

\subsection{Reducing Theorem \ref{ptri} to the case of slope \texorpdfstring{$0$}{0}}

We reduce Theorem \ref{ptri}(i) to the case where $W$ is pure. 

\begin{prop}
Assume that (i) and (ii) of Theorem \ref{ptri} are true whenever $W$ is pure. Then Theorem \ref{ptri}(i) holds. 
\end{prop}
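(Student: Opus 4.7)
The plan is to argue by induction on the length of the Dieudonné--Manin slope filtration of $W$, showing that the hypothesis (in the form of condition~(1)) is inherited by the first isoclinic piece and by the corresponding quotient.

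First, by the elementary direction of Lemma~\ref{schur}(i)---which, as recorded in Remark~\ref{oldarg}, holds in an arbitrary $E$-linear tensor category and in particular in the category of $\BPKE$-pairs---the assumption that $\bS^\uu(W)$ is triangulable with $\length(\uu)<n$ already implies that $\Sym^{\ell(\uu)-1}(W)\otimes_E W$ is triangulable, via the Littlewood--Richardson argument of that proof. I may therefore assume throughout that condition~(1) holds: there is a $\BPKE$-pair $W^\prime$ such that $W\otimes_E W^\prime$ is triangulable.

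Let $0=W^{(0)}\subset W^{(1)}\subset\dots\subset W^{(s)}=W$ be the slope filtration of $W$, with isoclinic graded pieces $W_i:=W^{(i)}/W^{(i-1)}$. I proceed by induction on $s$. If $s=1$, then $W$ is pure and the assumed pure case of Theorem~\ref{ptri}(i) applies directly. If $s>1$, the saturated sub-$\BPKE$-pair $W^{(1)}\otimes_E W^\prime$ of $W\otimes_E W^\prime$ inherits a triangulation: given a triangulation of the ambient pair, I intersect each step with $W^{(1)}\otimes_E W^\prime$ and saturate, yielding a filtration by saturated sub-pairs whose graded pieces have rank at most one. Since $W^{(1)}=W_1$ is pure, the assumed pure case gives that $W_1$ is potentially triangulable. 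Dually, $(W/W^{(1)})\otimes_E W^\prime$ is the quotient of $W\otimes_E W^\prime$ by a saturated sub-pair, hence is itself triangulable, and the slope filtration of $W/W^{(1)}$ has length $s-1$; by the inductive hypothesis, $W/W^{(1)}$ is potentially triangulable. After replacing $K$ by a finite extension $L/K$ over which both $W_1\vert_{G_L}$ and $(W/W^{(1)})\vert_{G_L}$ admit triangulations, concatenating these triangulations produces a triangulation of $W\vert_{G_L}$, so $W$ is potentially triangulable.

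The main technical point will be the careful verification that triangulability passes to saturated sub-$\BPKE$-pairs and their quotients, and that $W^{(1)}\otimes_E W^\prime$ remains saturated inside $W\otimes_E W^\prime$; both follow from elementary manipulations of complete flags together with the fact that tensoring with the locally free $W^\prime$ preserves saturation. Once this bookkeeping is settled, no further input beyond the pure case is required.
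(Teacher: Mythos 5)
Your argument is correct and follows essentially the same route as the paper's: induct along the Dieudonn\'e--Manin filtration, use that triangulability passes to saturated sub-$\BPKE$-pairs and to quotients by them, and invoke the assumed pure case on the isoclinic pieces. The only substantive difference is your handling of condition (2): you convert it once and for all into condition (1) for $W$ itself via the Littlewood--Richardson argument of Lemma \ref{schur}(i), which Remark \ref{oldarg} guarantees is valid for arbitrary (not necessarily pure) $\BPKE$-pairs, so your induction only ever uses part (i) for pure objects. The paper instead proves an extension lemma (Lemma \ref{triext}) treating (1) and (2) separately for the two pieces of an extension, and for condition (1) it detours through part (ii) applied to those pieces --- which in the inductive application need not be pure --- so your variant uses strictly less of the hypothesis and is, if anything, slightly cleaner on that point.
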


Note that the $\BPKE$-pair $W^\prime$ appearing in statement (ii) is not assumed to be pure of slope $0$.

\begin{proof}
Assume that $W$ is pure. 
We will use the following lemma.

\begin{lemma}\label{triext}
Let $0\to W_1\to W\to W_2\to 0$ be an exact sequence of $\BPKE$-pairs. Suppose that the statements of Theorem \ref{ptri}(i) and (ii) are true for $W_1$ and $W_2$. Then the statement of Theorem \ref{ptri}(i) is true for $W$.
\end{lemma}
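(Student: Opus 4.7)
The plan is to reduce assumption (2) to assumption (1) and then build a potential triangulation of $W$ by stitching together triangulations of $W_1$ and $W_2$ after suitable extensions.

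First, if $W$ satisfies condition (2), i.e.\ $\bS^\uu(W)$ is triangulable for some tuple $\uu$ with $\length(\uu)<\rk W$, I would invoke Remark \ref{oldarg} (see also Remark \ref{notann}) to conclude that there exists a $\BPKE$-pair $W^\prime$ such that $W\otimes_E W^\prime$ is triangulable; concretely one may take $W^\prime = \Sym^{\ell(\uu)-1}(W)$, using that $\bS^\uv(W)$ is triangulable for every $\uv$ with $\ell(\uv) = \ell(\uu)$ together with the Littlewood--Richardson decomposition of $\Sym^{\ell(\uu)-1}(W)\otimes W$ as a direct sum of such $\bS^\uv(W)$. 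So in either case of the hypothesis we may assume a $W^\prime$ with $W \otimes_E W^\prime$ triangulable is given.

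Tensoring the short exact sequence with $W^\prime$ yields a short exact sequence
\[ 0 \to W_1\otimes_E W^\prime \to W\otimes_E W^\prime \to W_2\otimes_E W^\prime \to 0 \]
of $\BPKE$-pairs, and since $W^\prime_e$ and $W^{\prime,+}_\dR$ are free, the sub-$\BPKE$-pair $W_1\otimes_E W^\prime$ remains saturated in $W \otimes_E W^\prime$. After extending coefficients to split the triangulation of $W\otimes_E W^\prime$, a complete flag of the latter restricts to a complete flag of the saturated sub-$\BPKE$-pair $W_1 \otimes_E W^\prime$ and projects to a complete flag of the quotient $W_2 \otimes_E W^\prime$; hence both tensor products are triangulable. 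So $W_1$ and $W_2$ each satisfy condition (1) of Theorem \ref{ptri}(i), and by the hypothesis of the lemma each is potentially triangulable.

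Finally, I would choose a finite extension $L/K$ over which both $W_1$ and $W_2$ become triangulable, then a finite extension $F/E$ such that both $F \otimes_E W_1|_{G_L}$ and $F \otimes_E W_2|_{G_L}$ are split triangulable. The pulled-back exact sequence
\[ 0 \to F \otimes_E W_1|_{G_L} \to F \otimes_E W|_{G_L} \to F \otimes_E W_2|_{G_L} \to 0 \]
gives a complete flag of saturated sub-$\BPKE$-pairs on the middle term, obtained by concatenating the flag of the subobject with the preimages under the projection of the flag of the quotient; transitivity of saturation and the fact that preimages of saturated sub-$\BPKE$-pairs under quotient maps are saturated guarantee saturation throughout. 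Hence $F \otimes_E W|_{G_L}$ is split triangulable, and $W$ is potentially triangulable. The main technical point to check---though not really an obstacle---is that tensoring preserves saturation, and that saturated subs, quotients, and extensions of triangulable $\BPKE$-pairs are triangulable (after possibly enlarging $F$ and $L$); all of this follows from flatness of the de Rham lattices and transitivity of saturation.
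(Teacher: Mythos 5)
Your proof is correct and follows the same overall mechanism as the paper's: tensor the short exact sequence with $W^\prime$, use stability of triangulability under subquotients to get condition (1) for $W_1$ and $W_2$, apply the hypothesis to each piece, and concatenate the resulting (potential) triangulations. The one genuine difference is your treatment of assumption (2): you first convert it into assumption (1) for $W$ itself via Remark \ref{oldarg} and the explicit choice $W^\prime=\Sym^{\ell(\uu)-1}(W)$, whereas the paper keeps the two assumptions separate and, in case (2), observes directly that $\bS^\uu(W_1)$ and $\bS^\uu(W_2)$ are subquotients of $\bS^\uu(W)$ and hence triangulable, so that condition (2) passes to $W_1$ and $W_2$. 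Your route is slightly cleaner on one point: applying condition (2) of Theorem \ref{ptri}(i) to $W_1$ and $W_2$ requires $\length(\uu)<\rk W_i$, which is not automatic from $\length(\uu)<\rk W$, while reducing everything to condition (1) sidesteps this bookkeeping entirely (and also makes the invocation of Theorem \ref{ptri}(ii) for $W_1,W_2$ unnecessary). The final stitching step, which the paper dispatches with ``it follows immediately,'' is spelled out correctly in your version; the saturation claims you list are all standard and hold as you say.
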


\begin{proof}
We prove the statement of Theorem \ref{ptri}(i) for $W$. Assume first that there exists a $\BPKE$-pair $W^\prime$ such that $W\otimes_EW^\prime$ is triangulable. The sequence
\[ 0\to W_1\otimes_EW^\prime\to W\otimes_EW^\prime\to W_2\otimes_EW^\prime\to 0 \]
is exact (since the underlying sequence $E$-vector spaces is exact). Since the category of split triangulable $\BPKE$-pairs is stable under subquotients, $W_1\otimes_EW^\prime$ and $W_2\otimes_EW^\prime$ are triangulable. Then Theorem \ref{ptri}(ii) for $W_1$ and $W_2$ implies that there exist (not too long) tuples $\uu$ and $\uv$ such that $\bS^\uu(W_1)$ and $\bS^\uv(W_2)$ are triangulable, and Theorem \ref{ptri}(i) implies that $W_1$ and $W_2$ are potentially triangulable. It follows immediately that $W$ is also potentially triangulable.

Now let $n=\rk W$ ans assume that there exists $\uu$, with $\length(\uu)<n$, such that $\bS^\uu(W)$ is triangulable. Then $\bS^\uu(W_1)$ and $\bS^\uu(W_2)$ are also triangulable. This is clear from the fact that these two $\BPKE$-pairs are sub-$\BPKE$-pairs of $\bS^\uu(W)$, as one sees from the definition of the Schur functor, and the category of triangulable $\BPKE$-pairs is stable under taking subobjects.
\end{proof}

Let $W$ be an arbitrary $\BPKE$-pair. 
By \cite[Théorème 2.1]{bergchen} (which is a translation to the language of $B$-pairs of \cite[Theorem 6.10]{kedmon}) $W$ admits a Dieudonné--Manin filtration, that is, an increasing filtration in sub-$\BPKE$-pairs whose graded pieces are pure of increasing slopes. Then, by Lemma \ref{triext}, if Theorem \ref{ptri} is true when $W$ is pure, it is also true for an arbitrary $W$. 
\end{proof}

Next we reduce all statements of Theorem \ref{ptri} to the case when $W$ is pure of slope $0$. Recall that a modification of a $\BPKE$-pair $W$, in the sense of \cite[Définition 2.1.8]{berconstr}, 
is a $\BPKE$-pair $W_0$ satisfying $W_{0,e}=W_e$, that is, modifying $W$ amounts to replacing $W_\dR^+$ with a different $\bB_\dR^+$-lattice in $W_\dR$. 

\begin{prop}
Assume that the statements of Theorem \ref{ptri} hold for every $E$ and every $W$ that is pure of slope $0$. Then they hold for every pure $W$.
\end{prop}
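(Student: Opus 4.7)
The plan is to reduce to the slope $0$ case by twisting $W$ with a rank $1$ pair of appropriate slope. Given a pure $\BPKE$-pair $W$ of slope $s \in \Q$, I would first extend the coefficient field $E$ to a finite extension $E'$ containing an element of $p$-adic valuation $-s$. Using Nakamura's classification \cite[Theorem 1.45]{nakclass}, this yields a rank $1$ $B\vert_K^{\otimes E'}$-pair $L = W(\eta)$ of slope $-s$, obtained by letting $\eta\colon K^\times \to (E')^\times$ be the unramified character sending a uniformizer $\pi_K$ to an element of the prescribed valuation. Then $M := W_{E'} \otimes_{E'} L$ is pure of slope $0$ over $E'$, and the hypothesis of the proposition applies to it.

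The core of the proof then consists in verifying that the statements of Theorem \ref{ptri} transfer between $M$ over $E'$ and $W$ over $E$. Two observations make this nearly automatic. First, both triangulability and potential triangulability are stable under finite coefficient extensions in both directions and under twisting by a rank $1$ pair (since $V \cong (V \otimes L) \otimes L^{-1}$ for any line $L$). Second, the Schur functor satisfies $\bS^\uu(V \otimes L) = \bS^\uu(V) \otimes L^{\otimes \ell(\uu)}$ for any line $L$ (because the symmetric group action on $L^{\otimes \ell(\uu)}$ is trivial), so $\bS^\uu(V \otimes L)$ is triangulable if and only if $\bS^\uu(V)$ is. Together these immediately transfer condition (2) of Theorem \ref{ptri}(i), statement (iii), the conclusion that $W$ is potentially triangulable, and the forward direction of condition (1).

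The one subtle point is the backward direction of condition (1): given a witness $W''$ over $E'$ for $M$, I would view $(L \otimes_{E'} W'')_{|E}$ (restriction of scalars to $E$) as a $\BPKE$-pair and check that it witnesses (1) for $W$ over $E$. This rests on the observation that a rank $1$ $B\vert_K^{\otimes E'}$-pair, viewed as a rank $[E':E]$ $\BPKE$-pair, becomes split triangulable after further extending $E$ to contain the Galois orbit of its parameter character; inducting along a split triangulation over $E'$ then shows that $(M \otimes_{E'} W'')_{|E} \cong W \otimes_E (L \otimes_{E'} W'')_{|E}$ is triangulable as a $\BPKE$-pair. Given these transfers, each of parts (i), (ii), and (iii) of Theorem \ref{ptri} for $W$ follows from the corresponding statement applied to $M$. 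The main technical input is the existence of the rank $1$ pair $L$ of slope $-s$, which is routine once one notes that slopes of pure $\BPKE$-pairs are rational and any given rational valuation can be realized by adjoining a suitable root of a uniformizer of $E$.
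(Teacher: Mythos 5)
Your argument is correct in outline but takes a genuinely different route from the paper. The paper never twists by a rank $1$ pair: writing $s=d/h$ in lowest terms, it first passes to the \emph{unramified} coefficient extension $E_h$ of degree $h$, after which the slope is the integer $d$, and then replaces the lattice $W_\dR^+\otimes_EE_h$ by $t^{-d}(W_\dR^+\otimes_EE_h)$ --- a \emph{modification} in Berger's sense. The input that makes this harmless is \cite[Corollary 3.2]{berdimtri}: triangulability depends only on the $\bB_{e,E}$-component, so a modification is triangulable iff the original pair is, and likewise $\bS^\uu(W_0)$ and $W_0\otimes_EW^\prime$ are modifications of $\bS^\uu(W)$ and $W\otimes_EW^\prime$. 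Your route --- tensoring with a rank $1$ pair $L$ of slope $-s$ over a ramified extension $E^\prime$ --- buys very transparent transfers of conditions (1) and (2), via $\bS^\uu(V\otimes L)\cong\bS^\uu(V)\otimes L^{\otimes\ell(\uu)}$ and untwisting, and your treatment of the backward direction of (1) by restriction of scalars is in fact more explicit than what the paper writes down. The price is two inputs the paper's route avoids: additivity of slopes under tensor products of pure objects, and --- the one point you should nail down --- the exact relation between the parameter $\eta$ and the slope of $W(\eta)$. That slope is a fixed multiple of $v_p(\eta(\pi_K))$ whose constant depends on $K$ and on the normalization in force (note that the paper's Lemma \ref{modEh} assigns slope $d$, not $d/h$, to $W\otimes_EE_h$, so the convention is sensitive to the coefficient field); hence ``an element of valuation $-s$'' is not quite the right prescription in general. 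Since the achievable valuations form $\frac{1}{e_{E^\prime}}\Z$ with $e_{E^\prime}$ arbitrary, a rank $1$ pair of slope exactly $-s$ does exist once the constant is sorted out, and with that caveat each of your transfers correctly reduces the proposition to the slope-$0$ case.
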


\begin{proof}
We rely on the following lemma.

\begin{lemma}\label{trimod}
Let $W$ be a $\BPKE$-pair and $W_0$ be a modification of $W$. Then $W$ is (split) triangulable if and only if $W_0$ is.
\end{lemma}

\begin{proof}
By \cite[Corollary 3.2]{berdimtri} a $\BPKE$-pair is (split) triangulable if and only if the associated $\bB_{e,E}$-representation is (split) triangulable. The conclusion follows from the fact that $W_e=W_{0,e}$.
\end{proof}

Let $W$ be a $\BPKE$-pair, pure of slope $s=d/h$ with $d$ and $h$ coprime integers, $h>0$, and let $E_h$ is the unique unramified extension of $E$ of degree $h$.

\begin{lemma}\label{modEh}
There exists a modification $W_0$ of $W\otimes_EE_h$ of slope $0$.
\end{lemma}

\begin{proof}
The $B\vert_K^{\otimes E_h}$-pair $W\otimes_EE_h$ has slope $d$.
Let $D$ be the $(\varphi,\Gamma_K)$-module over $E\otimes_{\Q_p}\bB_{\rig,K}^\dagger$, of slope $d$, associated with $W\otimes_EE_h$ by Berger's dictionary. 
The $(\varphi,\Gamma_K)$-module over $E\otimes_{\Q_p}\bB_{\rig,K}^\dagger$ associated with the $\BPKE$-pair $W^\prime=(W_e\otimes_EE_h,t^{-d}(W_\dR^+\otimes_EE_h))$ is $t^{-d}D$ (here we are improperly writing $t$ for the element $t\otimes 1\in\bB_\dR\otimes_{\Q_p}E)$. Since $t^{-d}D$ is isoclinic of slope $0$, we conclude that $W^\prime$ is pure of slope $0$. 
\end{proof}

Obviously $W$ is triangulable if and only if $W\otimes_EE_h$ is triangulable (we are not asking for splitness). Hence it is enough to deduce the statements of Theorem \ref{ptri} after (implicitly) replacing $W$ with $W\otimes_EE_h$. Thanks to Lemma \ref{modEh}, we can then pick a modification $W_0$ of $W$ of slope $0$. 

Assume now that Theorem \ref{ptri} is known for $\BPKE$-pairs of slope $0$; in particular, it holds for $W_0$. We deduce the statements of Theorem \ref{ptri} for $W$. Observe that:
\begin{itemize}
\item[(a)] For every $\BPKE$-pair $W^\prime$, $W_0\otimes_{E}W^\prime$ is a modification of $W\otimes_EW^\prime$, hence it is triangulable if and only if $W\otimes_EW^\prime$ is. 
\item[(b)] For every tuple $\uu$, the $\BPKE$-pair $\bS^\uu(W_0)$ is a modification of $\bS^\uu(W)$, hence it is triangulable if and only if $\bS^\uu(W_0)$ is. 
\end{itemize}
By remarks (a) and (b), if (1) or (2) in Theorem \ref{ptri}(i) holds for $W$, then it holds for $W_0$. Hence Theorem \ref{ptri}(i) applied to $W_0$ gives that $W_0$ is potentially triangulable, which in turn implies that $W$ is potentially triangulable. Moreover, Theorem \ref{ptri}(ii) gives that conditions (1) and (2) are equivalent for $W_0$, hence they are also equivalent for $W$. In alternative, Theorem \ref{ptri}(ii) for $W$ follows immediately from Theorem \ref{ptri}(ii) for $W_0$ and remark (b) above.
\end{proof}


\smallskip

\subsection{Extending the base and coefficient fields}\label{fieldext}

Before continuing with the proof of Theorem \ref{ptri}, we give here a procedure for replacing our base and coefficient fields $K$ and $E$ with a common finite extension. We will refer to it a couple of times in the following. We keep working under the assumption $E^\Gal\subset K$. Let $L$ be a Galois extension of $\Q_p$ containing $K$, and let $\sigma$ be an element of $\Gal(E/\Q_p)$. Let $W$ be a $\BPKE$-pair. We:
\begin{enumerate}[label=(\arabic*)]
\item replace both $E$ and $K$ with $L$ and the $\BPKE$-pairs $W$, 
with their extension of scalars to $L$ and the restriction of the Galois action to $G_L$, and
\item replace the automorphism $\sigma$ of $E$ with an arbitrarily chosen extension of it to an automorphism $\wtl\sigma_0$ of $L$.
\end{enumerate}
There are isomorphisms of $\bB_\dR$-representations of $G_L$
\begin{align*}
W_{\dR}^\sigma&\xto{x\mapsto x\otimes 1}\bigoplus_{\substack{\wtl\sigma\colon L\to L \\ \wtl\sigma\vert_E=\sigma}}(L\otimes_EW)_{\dR}^{\wtl\sigma}\xto{\pi^{\wtl\sigma_0}} (L\otimes_EW)_{\dR}^{\wtl\sigma_0},
\end{align*}
and of $E\bB_\crys$-representations of $G_L$
\begin{align*}
W_{\crys}^\sigma&\xto{x\mapsto x\otimes 1}\bigoplus_{\substack{\wtl\sigma\colon L\to L \\ \wtl\sigma\vert_E=\sigma}}(L\otimes_EW)_{\crys}^{\wtl\sigma}\xto{\pi^{\wtl\sigma_0}} (L\otimes_EW)_\crys^{\wtl\sigma},
\end{align*}
that induce isomorphisms between the leftmost and rightmost objects in each of the two lines.
In any given application, we replace all the elements that have been chosen in $W_{\dR}^\sigma$ and $W_\cris^\sigma$ with their images in $(L\otimes_EW)_{\dR}^{\wtl\sigma}$ and $(L\otimes_EW)_{\cris}^{\wtl\sigma}$ via the isomorphisms above. Remark that, if $f_{L/E}$ is the inertia degree of $L/E$, then $\varphi_{L}=(1\otimes\varphi_E)^{f_{L/E}}$.

\smallskip

\subsection{Proof of Theorem \ref{ptri} for \texorpdfstring{$B$}{B}-pairs of slope \texorpdfstring{$0$}{0}}\label{proof0}

We now prove Theorem \ref{ptri} assuming that $W$ is pure of slope $0$. 
We will apply the results of Sections \ref{tpsub}-\ref{pbschur} by choosing:
\begin{itemize}
\item as $\calC$ the category of $\BPKE$-pairs pure of slope $0$ (this category is neutral Tannakian since it is equivalent to the category of continuous $E$-representations of $G_K$ by \cite[Proposition 2.2.9]{berconstr}), 
\item as $\calD$ the category $\Rep^{\ptri,\qreg}_E(G_K)$ introduced in Definition \ref{qreg}.
\end{itemize}

\begin{rem}
Recall for a moment the notation of Lemma \ref{Ifinite}. One could hope that, since $I_V$ is finite by Lemma \ref{Ifinite}, it is possible to find an open subgroup of $G_K$ so that the image in $\GL_V$ of the Tannakian fundamental group of $\Rep^\tri_E(G_K)$ intersects $I_V$ trivially. Unfortunately this is in general impossible if $I_V$ is non-trivial: For instance, for a two-dimensional trianguline $E$-representation $V$ of $G_K$ and for every finite extension $K^\prime$ of $K$, the image of $G_{\Rep_E^\tri(G_{K^\prime})}$ in $\GL(V)$ is the Zariski closure of the image of $G_{K^\prime}$ in $\GL(V)$. If $V$ does not admit an abelian subgroup of index $1$ or $2$, then for every $K^\prime$ the above Zariski closure contains $\SL(V)$. We know from Lemma \ref{Ifinite} that $I_V$ is contained in $\SL(V)$, hence, if it is not trivial, there is no way to make it trivial by replacing $K$ with a finite extension. In other words, one cannot prove by an abstract Tannakian argument that replacing our current $\calD$ with the category of potentially trianguline $E$-representations of $G_K$ makes the kernel $I$ of $G_{\ovl\calD}\to G_{\calD}$ trivial.
\end{rem}

By Lemma \ref{Ivanishgen}, it is enough to prove Theorem \ref{ptri} for all the $\BPKE$-pairs in a tensor generating set of $\ovl\calD$. By Corollary \ref{ovlDgen}, such a set is provided by the $\BPKE$-pairs $W$ of slope 0 for which there exists a tuple $\uu$ such that $\length(\uu)<\rk(W)$ and $\bS^\uu(W)\in\Rep_E^{\ptri,\qreg}(W)$.
 
\begin{lemma}\label{uqreg}
	Let $W$ be a $\BPKE$-pair, $n=\rk(W)$, and $\uu$ a tuple with $\length(\uu)<n$. 
	\begin{itemize}
	\item The Hodge--Tate weights of $W$ are uniquely determined by those of $\bS^\uu(W)$.
	\item If $W$ is a $\BPKE$-pair such that, for some tuple $\uu$ with $\length(\uu)<n$, $\bS^\uu(W)$ is quasi-regular, then $W$ is also quasi-regular.
	\end{itemize}
\end{lemma}

\begin{proof}
	Let $\uu$ be as in the statement.
Let $T_n$ be a maximal split torus in $\GL_n(E)$. 
	For an arbitrary embedding $\tau\colon K\into\C_p$, consider the Hodge--Tate cocharacter 
	\[ \HT_{W,\tau}\colon\bG_m(\C_p)\to T_n(\C_p) \] 
	associated with $W$ and $\tau$. The representation $\bS^\uu\colon\GL_n\to\GL_m$ can be restricted to a representation $\bS_T^\uu\colon T_n\to T_m$, where m is the rank of $\bS^\uu(W)$ and $T_m$ is a maximal split torus in $\GL_m(E)$. The Hodge--Tate cocharacter associated with $\bS^\uu(W)$ is then $\bS_T^\tau\ccirc\HT_\tau$. Since the kernel of $\bS_T^\tau$ is the group $\mu_{\ell(\uu)}$ of $\ell(\uu)$-th roots of unity, two Hodge--Tate cocharacters $\bG_m(\C_p)\to T_n(\C_p)$ coincide after composition with $\bS_T^\uu$ if and only if their quotient takes values in $\mu_{\ell(\uu)}$. This is only possible if such a quotient is trivial, since all Hodge--Tate cocharacters $\bG_m(\C_p)\to T_n(\C_p)$ take values in the image of the exponential map $(\C_p)^n\to T(\C_p)$.
	\end{proof}

Thanks to Lemma \ref{uqreg}, we can assume from now on that $W$ is quasi-regular. 

We observe that if a $\BPKE$-pair $W^\prime$ as in condition (1) of Theorem \ref{ptri}(i) exists, then there exists, up to extending $E$, a $\BPKE$-pair pure of slope $0$ satisfying the same property: The first non-zero step $\Fil^1W^\prime$ in the Dieudonné--Manin filtration of $W^\prime$ is a pure sub-$\BPKE$-pair of $W^\prime$, and $W\otimes\Fil^1W^\prime$ is a sub-$\BPKE$-pair of $W\otimes_EW^\prime$. Since $W\otimes_EW^\prime$ is triangulable, the same is true for $W\otimes_E\Fil^1W^\prime$. Hence, up to replacing $W^\prime$ with $\Fil^1W^\prime$, we can assume that $W^\prime$ is pure. Then, up to implicitly extending $E$, we can modify $W^\prime$ to a $\BPKE$-pair $W_0^\prime$ which is pure of slope $0$. Since $W\otimes_EW_0^\prime$ is a modification of the triangulable $\BPKE$-pair $W\otimes_EW^\prime$, it is triangulable.

Given that we can harmlessly strengthen condition (1) of part (i) by requiring $W^\prime$ to be pure of slope $0$, parts (ii) and (iii) of the theorem are an immediate consequence of Proposition \ref{schur} applied to our choice of $\calC$ and $\calD$.

We prove part (i). Assume that one of the equivalent conditions in part (i) holds. Thanks to part (iii), we can assume that $\Sym^nW$ is triangulable. 
We proceed by induction on the rank of $W$. If the rank is $1$ there is nothing to prove. If the rank is larger than $1$, we prove that there exist finite extensions $E_1$ of $E$ and $K_1$ of $K$ such that $(W\otimes_EE_1)\vert_{G_{K_1}}$ contains a saturated sub-$\BPKKEE$-pair $W^\prime$ of rank $1$. This is enough: let $(W\otimes_EE_1)\vert_{G_{K_1}}/W^\prime$ be the cokernel of the inclusion of $W^\prime$ in $W\otimes_EE_1$; it is again a $\BPKKEE$-pair because $W_1$ is saturated in $(W\otimes_EE_1)\vert_{G_{K_1}}$. Moreover $\Sym^{n}((W\otimes_EE_1)\vert_{G_{K_1}}/W^\prime)$ can be easily seen to be a quotient of the split trianguline $\BPKKEE$-pair $\Sym^{n}(W\otimes_EE_1)\vert_{G_{K_1}}$, hence it is also split trianguline and we can use the inductive hypothesis in rank $\rk W-1$.

We proceed to prove the sufficient claim from the previous paragraph. We implicitly replace both $K$ and $E$ with finite extensions such that $E=E^\Gal=K$ and that $\Sym^nW$ becomes split triangulable over $E$, and then extend scalars in $W$ to the new $E$. 
We still denote $K$ and $E$ with distinct letters in order to emphasize the different roles played by the base and coefficient field, and to make it clear at which point we are using the fact that they coincide. 
For $X\in\{W_e,W_\dR^+,W_\dR\}$ and elements $f_1,\ldots,f_n$ of $X$, we will abuse of notation and always write $f_1\otimes\ldots\otimes f_n$ for the image of this element of $X^{\otimes n}$ under the natural projection $X^{\otimes n}\to\Sym^nX$.


\begin{lemma}\label{qr}
The $\BPKE$-pair $\Sym^nW$ is a direct factor of $(W\otimes_EW^\vee)^q\otimes_E\det^r{W}$ for some $q\in\Z_{\ge 0}$ and $r\in\Z$.
\end{lemma}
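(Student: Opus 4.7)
The plan is to exhibit explicit values $q=n$ and $r=1$, building the inclusion out of standard symmetric-monoidal operations. The observation is that at the level of a free $E$-module $V$ of dimension $n$ there is a canonical $\GL(V)$-equivariant inclusion
\[ \Sym^n V \hookrightarrow (V \otimes_E V^\vee)^{\otimes n} \otimes_E \det V, \]
and every ingredient entering its construction (symmetrization, antisymmetrization, coevaluation, tensor commutativity) makes sense in any $E$-linear rigid symmetric tensor category with Schur functors. Since the category of $\BPKE$-pairs is such a category, as recalled in Section~\ref{pbschur}, the same recipe applied to $W$ will produce the desired sub-$\BPKE$-pair.

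Concretely, I would chain together three canonical morphisms of $\BPKE$-pairs. First, the Young symmetrizer $\frac{1}{n!}\sum_{\sigma\in\calS_n}\sigma$ realizes $\Sym^n W$ as a direct summand of $W^{\otimes n}$. Second, the evaluation pairing between top exterior powers gives a canonical isomorphism $\det W^\vee \otimes_E \det W \cong \1$; composing its inverse with the antisymmetrization inclusion $\wedge^n W^\vee \hookrightarrow (W^\vee)^{\otimes n}$ produces an inclusion $\1 \hookrightarrow (W^\vee)^{\otimes n} \otimes_E \det W$. Third, tensoring this inclusion on the left by $W^{\otimes n}$ and using the symmetric braiding to rearrange the tensor factors yields
\[ W^{\otimes n} \hookrightarrow W^{\otimes n} \otimes_E (W^\vee)^{\otimes n} \otimes_E \det W \cong (W \otimes_E W^\vee)^{\otimes n} \otimes_E \det W. \]
Precomposing with the symmetrization from the first step gives the morphism we want.

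Injectivity as a morphism of $\BPKE$-pairs -- i.e.\ injectivity on both $W_e$ and $W_\dR^+$ -- will follow step by step: in characteristic $0$ the symmetrizing and antisymmetrizing idempotents project onto direct summands of the free modules $W_e^{\otimes n}$ and $(W_\dR^+)^{\otimes n}$; the coevaluation $\1 \hookrightarrow \wedge^n W^\vee \otimes_E \det W$ is the inclusion of a rank-$1$ summand; and tensoring with a free module over $\bB_{e,E}$ or $\bB_{\dR,E}^+$ preserves injectivity. Compatibility of the $W_e$- and $W_\dR^+$-components after extending scalars to $\bB_{\dR,E}$, as well as $G_K$-equivariance, is automatic from the naturality in $W$ of each constituent morphism.

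The nontrivial input is purely representation-theoretic, namely identifying the right exponents and the right pattern of canonical maps. The central-character constraint for $\GL_n$ acting by scalars (on which $\Sym^n V$ and $\det V$ both have weight $n$ while $V\otimes V^\vee$ is trivial) forces $r=1$ after rescaling, and the chain of inclusions above then makes $q=n$ a natural choice. Once the recipe has been written down in $\Vect_E$, its transport to $\BPKE$-pairs is formal and uses only the symmetric-monoidal and rigid structure of the category, together with the characteristic-$0$ existence of Young symmetrizers.
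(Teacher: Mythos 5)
Your argument is correct, and it reaches the conclusion by a genuinely different route from the paper. The paper's proof first reduces to the case of $E$-linear representations of $G_K$ (using that all the $B$-pairs in play are pure of slope $0$ in the context where the lemma is applied), observes that $V\otimes_EV^\vee\otimes_E\det V$ is a faithful representation of $\GL(V)/\mu_n$ through which $\Sym^nV$ factors, and then invokes Deligne's tensor-generation theorem to produce non-explicit exponents $q=a+b$ and $r=a-b$. You instead build the embedding by hand inside the tensor category of $\BPKE$-pairs: symmetrization $\Sym^nW\hookrightarrow W^{\otimes n}$, the canonical isomorphism $\1\cong\det W^\vee\otimes_E\det W$ followed by antisymmetrization into $(W^\vee)^{\otimes n}\otimes_E\det W$, and the braiding. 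This yields the explicit values $q=n$ and $r=1$, is visibly a split injection on both the $\bB_{e,E}$- and $\bB_{\dR,E}^+$-components (so the image is an honest sub-$\BPKE$-pair), and requires neither the slope-$0$ reduction nor Tannakian duality; it is precisely the strengthening that the paper only sketches in the remark following the lemma, where it is noted that the argument carries over to arbitrary $W$ and to any $E$-linear tensor category. One small imprecision: the phrase ``$r=1$ after rescaling'' understates the situation, since comparing central characters ($\lambda^n$ on $\Sym^nV$ versus $\lambda^{rn}$ on the target) forces $r=1$ outright, also in the paper's version. Relative to the paper you lose nothing for this lemma, and you gain explicitness and validity for $W$ of arbitrary slope.
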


\begin{proof}
Since all the $\BPKE$-pairs involved in the statement are pure of slope $0$, it is enough to prove the analogous result after replacing $W$ with an $E$-representation $V$ of $G_K$. The action of $\GL(V)$ on $V\otimes_EV^\vee\otimes_E\det{V}$ factors through $\GL(V)/\mu_n$ and makes $V\otimes_EV^\vee\otimes_E\det{V}$ into a faithful representation of the latter group. By \cite[Theorem 3.1(a)]{delhodge} $V\otimes_EV^\vee\otimes_E\det{V}$ is a tensor generator of the Tannakian category of $E$-representations of $\GL(V)/\mu_n$, meaning that every irreducible $E$-representation of $\GL(V)/\mu_n$ appears as a direct factor of
\[ (V\otimes_EV^\vee\otimes_E\det{V})^a\otimes_E((V\otimes_EV^\vee\otimes_E\det{V})^{\vee})^b \]
for some non-negative integers $a$ and $b$. Then $q=a+b$ and $r=a-b$ meet the requirements of the theorem.
\end{proof}

\begin{rem}
With the notations of the above proof, it is immediate that the relations defining $\Sym^nV$ as a direct factor of $(V\otimes_EV^\vee)^q\otimes_E\det^r{V}$ carry over when one replaces $V$ with either $W_e$ or $W_\dR^+$, so that Lemma \ref{qr} holds even when $W$ is not of slope $0$. More generally, the proof can be rephrased in terms of Schur functors in any $E$-linear tensor category.
\end{rem}

Take $q$ and $r$ as in Lemma \ref{qr}. By Lemma \ref{dual}, the $\BPKE$-pair $W\otimes_EW^\vee$ is triangulable. Fix a triangulation of $W\otimes_EW^\vee$. The triangulation of $W\otimes_EW^\vee$ induces triangulations of $(W\otimes_EW^\vee)^q$, of the twist $(W\otimes_EW^\vee)^q\otimes_E\det^r{W}$ and of its direct factor $\Sym^nW$.
We write in short $N$ for the rank $\binom{2n-1}{n-1}$ of $\Sym^nW$. Let
\begin{equation}\label{sym2tri} 0=\Fil^0\Sym^nW\subset\Fil^1\Sym^nW\subset\ldots\subset\Fil^{N}\Sym^nW \end{equation}
be the above triangulation of $\Sym^nW$, and let $W_i^\prime$ be the rank $1$ quotient $\Fil^i\Sym^nW/\Fil^{i-1}\Sym^nW$ for $1\le i\le N$. 

By \cite[Theorem 3.4]{berdimtri} the triangulation of $(W\otimes_EW^\vee)_e$ induced by the triangulation of $W\otimes_EW^\vee$ splits as a direct sum of $\bB_{e,E}$-representations of rank $1$. Since the triangulation of $\Sym^nW$ is constructed from that of $W\otimes_EW^\vee$ via Lemma \ref{qr}, the triangulation of the $\bB_{e,E}$-representation $(\Sym^nW)_e$ induced by \eqref{sym2tri} also splits as a direct sum
\[ (\Sym^nW)_e\cong\bigoplus_{i=1}^{N}W_{i,e}^\prime, \]
for some $\bB_{e,E}$-representations $W_{1,e},\ldots,W_{N,e}$ of rank $1$. 
Tensoring with $\bB_{\dR,E}$ over $\bB_{e,E}$ we obtain a decomposition
\begin{equation}\label{symdirdR} (\Sym^nW)_\dR\cong\bigoplus_{i=1}^{N}W_{i,\dR}^\prime. 
\end{equation}

Since we are assuming that $E^\Gal\subset K$, we can apply the decomposition \eqref{decW} to the $\bB_\dR$-representations $W_\dR$, $\Sym^nW_\dR$ and $W_{i,\dR}^\prime$, $1\le i\le N$, to write
\begin{align*} W_\dR&\cong\bigoplus_{\sigma\colon E\into K}W_{\dR}^\sigma , \\
(\Sym^nW)_\dR&\cong\bigoplus_{\sigma\colon E\into K}(\Sym^nW)_{\dR}^\sigma ,  \\
W_{i,\dR}^\prime&\cong\bigoplus_{\sigma\colon E\into K}W_{i,\dR}^{\prime,\sigma}.  \end{align*}
The decompositions above are obviously compatible in the sense that
\begin{equation}\label{compdR} \Sym^n(W_{\dR,\sigma})\cong(\Sym^nW)_{\dR,\sigma}\cong\bigoplus_{i=1}^NW_{i,\dR,\sigma}^\prime
\end{equation}
as $\bB_\dR$-representations; we used the direct sum decomposition \eqref{symdirdR} for the second isomorphism. 

Recall that we assumed $W$ to be quasi-regular, as in Definition \ref{qreg}. Let $\sigma$ be an embedding of $E$ into $K$ that, seen as en embedding $E\into\Qp$, verifies the quasi-regularity condition for $W$. 
We can write the isomorphism class of $W_{\dR,\sigma}$ following Fontaine's classification: 
\begin{equation}\label{WdRsigma} W_{\dR}^\sigma\cong\bigoplus_{\substack{A\in C(\ovl K/\Z) \\ d\in\Z_{\ge 0}}}\bB_\dR[A;d]^{h_{A,d,\sigma}} 
\end{equation}
for some non-negative integers $h_{A,d,\sigma}$, almost all zero. By the compatibilities \eqref{compdR}, the symmetric $n$-th power of the right-hand side of \eqref{WdRsigma} must decompose as a direct sum of $1$-dimensional $\bB_\dR$-representations. 

Let $A\in C(\ovl K/\Z)$ be an orbit appearing in the decomposition \ref{WdRsigma}. Since $\Z_p\{0;d\}$ contains a trivial rank 1 $\Z_p$-subrepresentation, $\Sym^n(\bB_\dR[A;d])$ contains a sub-$\bB_\dR$-representation of the form $\Sym^n(\bB_\dR[A])$, that has to be decomposable as a direct sum of 1-dimensional $\bB_\dR$-representations. 

\begin{lemma}\label{KAdec}
	There exists a finite extension $K^\prime$ of $K$ such that the $\bB_\dR$-representation $\bB_\dR[A]$ of $G_{K^\prime}$ decomposes as a direct sum of 1-dimensional representations.
	\end{lemma}

\begin{proof}
We abuse of notation and still write $A$ for an arbitrary element of $C(\ovl K)$ lifting $A\in C(\ovl K/\Z)$. 
Up to implicitly replacing $K$ by $K_{\mu_{r_A}}$ and $A$ by a $G_{K_{\mu_{r_A}}}$-suborbit of it, we can assume that $r_A=1$, hence that $K_\infty[A]\cong K_\infty\otimes_KK_A[\rho_A]$ for $\rho_A$ as in \eqref{rhoa}. Then  $\Sym^n(\bB_\dR[A])\cong\bB_\dR\otimes_{K_A}\Sym_K^n(K_A[\rho_A])$, where we emphasize in the notation that we are taking the symmetric power of $K_A[\rho_A]$ seen as a $K$-linear representation. By \cite[Proposition 3.18]{fontarith}, the $\bB_\dR$-representation $\bB_\dR\otimes_{K_A}\Sym_K^n(K_A[\rho_A]$ of $G_K$ can be decomposed as a direct sum of 1-dimensional representations if and only if the same is true for the $K$-linear representation $\Sym_K^n(K_A[\rho_A])$.

Now if $v$ is an element of the $K$-vector space $K_A$, then $G_K$ acts on the direct factor $K_Av^{\otimes n}$ of $\Sym_K^n(K_A[\rho_A])$ via $\rho_A^{\otimes_{K_A}n}$, that is, the $n$-th power of $\rho_A$ seen as a $K_A^\times$-valued character.  The only elements of $K_A^\times$ that act diagonally on any given $K$-basis of $K_A$ are the elements of $K^\times$, so $K_Av^{\otimes n}$ admits a decomposition into 1-dimensional $K$-factors if and only if $\rho_A^{\otimes_{K_A}n}$ is $K^\times$-valued. With the notations of \eqref{rhoa}, if 
\[ \rho_A^{\otimes_{K_A}n}=\exp(na\log\chi_K^\cyc) \] 
is definable over $K$ then $na\in K$, which implies that $a\in K$, as desired. 
\end{proof}

Thanks to Lemma $\ref{KAdec}$, we can implicitly replace $K$ by a finite extension such that $\bB_\dR[A]$ is 1-dimensional for every $A$ appearing in \eqref{WdRsigma}, that is, every such $A$ is a singleton $\{a\}$ for some $a\in\ovl K/\Z$.

If $a\in\ovl K/\Z$ and $d>0$ then $\Sym^n(\bB_\dR[\{a\};d])$ is never semisimple: it is enough to observe that
\[ \Sym^n(\Z_p\{0;d\})=\Sym^n(\Sym^{d-1}\Z_p\{0;2\}) \]
always contains a direct factor isomorphic to 
\[ \Sym^{2d}\Z_p\{0;2\}\cong\Z_p\{0;2d-1\}, \] 
so that $\Sym^n(\bB_\dR[\{a\};d])$ contains a direct factor isomorphic to $\bB_\dR[\{na\};2d-1]$, which is not semisimple by Fontaine's classification \cite[Théorème 3.19]{fontarith}.

We conclude that $d=1$ whenever $h_{A,d,\sigma}>0$, so that $W_{\dR}^\sigma$ is isomorphic to a direct sum of $1$-dimensional $\bB_\dR$-representations. For each such 1-dimensional factor we pick a generator, and build this way a basis $\{f_{\dR,i}\}_{i=1,\ldots,m}$ of $W_\dR^\sigma$. 

To simplify the notation in the following arguments we will write $f_\dR=f_{\dR,1}$. 
Let $I_\dR$ be the set of $i\in\{1,\ldots,N\}$ such that $W_{i,\dR,\sigma}^{\prime}$ has the same Hodge--Tate--Sen weight as the $\bB_\dR$-representation $\bB_\dR f_\dR$. 
Since our chosen $\sigma$ verifies the quasi-regularity condition of Definition \ref{qreg} for $W$, the tuple of Hodge--Tate--Sen weights of $W_i^\prime$ is independent of $i\in I_\dR$. Choose an arbitrary $i_0\in I_\dR$. By Lemma \ref{modEh} there exists, up to implicitly replacing $E$ and $K$ by a common finite extension, a slope 0 modification $W_{i_0}^\second$ of $W_{i_0}^\prime$. Write $W_{i_0}^\second=W(\delta)$ for a character $\delta\colon G_K\to E^\times$. 

Up to replacing $K$ and $E$ implicitly by a common finite extension when $p\mid n$, we define an $n$-th root $\delta^{1/n}\colon G_K\to E^\times$ of $\delta$ by 
\[ \delta^{1/n}(g)=\exp{\big(\frac{1}{n}\log{\delta}(g)\big)}. \]
Since $W$ and $\Sym^nW$ are potentially triangulable if and only if $W\otimes_EW(\delta^{-1/n})$ and $\Sym^n(W\otimes_EW(\delta^{-1/n}))\cong\Sym^nW\otimes_EW(\delta^{-1})$ are, we can implicitly replace $W$ with $W\otimes_EW(\delta^{-1/n})$, $W_i^\prime$ with $W_i^\prime\otimes_EW^{-1}$ for $1\le i\le N$ and $f_{\dR,i}$ with $f_{\dR,i}\otimes 1$ for $1\le i\le n$, and assume from now on that:
\begin{equation}\label{asfdr}\tag{$\star$} \text{$f_\dR$ is fixed under the action of $G_K$}
\end{equation} 
hence that, for every $i\in I_\dR$, the Hodge--Tate--Sen weights of $W_i^\prime$ all vanish. In particular $f_{\dR}^{\otimes n}$ is also fixed by the action of $G_K$ and, for every $i\in I_\dR$, the rank 1 $\BPKE$-pair $W_i^\prime$ is Hodge--Tate, hence de Rham, hence potentially crystalline by the $p$-adic monodromy theorem for $B$-pairs \cite[Théorème 2.3.5]{berconstr}. By replacing once more $K$ and $E$, implicitly, by a common finite extension, we can further assume that 
\begin{equation}\label{asasfdr}\tag{$\star\star$} \text{$W_i^\prime$ is crystalline for every $i\in I_\dR$.}
	\end{equation}
This means that there exists $d_i\in E\bB_\crys^\sigma$ such that the element $d_i\otimes F_{e,i}$ of $E\bB_\crys^\sigma\otimes_{\bB_{e,E}}\bB_{e,E}F_{e,i}$ is $G_K$-invariant. 

\begin{lemma}\label{exa}
	There exists $a\in\bB_\dR^\sigma$ and $f_\crys^\sigma\in EB^\sigma_\crys\otimes_{\bB_{e,E}}W_e$ such that $f_\dR=a\otimes f_\crys^\sigma$.
	\end{lemma}

\begin{proof}
Let $W_{\dR,0}^{\sigma,\prime}=\bigoplus_{i\in I_\dR}W_{i,\dR}^{\sigma,\prime}$, that is the largest trivial sub-$\bB_\dR$-representation of $W_{\dR}^{\sigma,\prime}$. The elements $d_i\otimes F_{e,i}$, seen inside of $W_{\dR,0}^{\sigma,\prime}$ via the extension of scalars through $E\bB_\crys^\sigma\into\bB_\dR^\sigma$, form a basis of $G_K$-invariant elements of $W_{\dR,0}^{\sigma,\prime}$. In particular, since $(\bB_\dR^\sigma)^{G_K}=K$, they are a $K$-basis of the $K$-vector space of $G_K$-invariant elements in $W_{\dR,0}^{\sigma,\prime}$. In particular the $G_K$-invariant element $f_\dR^{\otimes n}$ can be written as 
\begin{equation*} f_\dR^{\otimes n}=\sum_{i\in I_\dR}k_id_i\otimes F_{e,i}
\end{equation*} 
for some $k_i\in K$. 

Observe that $k_id_i$ is an element of $E\bB_\crys^\sigma$, since $K=E$. This means that $f_\dR^{\otimes n}=1\otimes F_\crys$ for some $F_\crys\in E\bB_\crys^\sigma\otimes_{\bB_{e,E}}(\Sym^nW)_e$, which by Lemma \ref{lemmafn} implies that 
\[ f_\dR=a\otimes f_\crys^\sigma \] 
for an $f_\crys^\sigma\in E\bB_\crys^\sigma\otimes_{\bB_{e,E}}W_e$ and an $a\in\bB^\sigma_\dR$ (satisfying $a^n\in E\bB_\crys^\sigma$).
\end{proof}

Let $W_{\crys,0}^\sigma$ be the smallest $\varphi_E$-stable $E\bB_\crys^\sigma$-submodule of $E\bB_\crys^\sigma\otimes_{\bB_{e,E}}W_e$ containing $f_\crys^\sigma$, and let $h$ be its rank. 
 
 \begin{lemma}\label{crysbasis} 
 The set $\{1\otimes\varphi_E^if_\crys^\sigma\}_{i=0,\ldots,h-1}$ is a basis of $G_K$-eigenvectors for the $\Frac(E\bB^\sigma_\crys)$-representation $\Frac(E\bB^\sigma_\crys)\otimes_{E\bB^\sigma_\crys}W_{\crys,0}^\sigma$.
\end{lemma}
 	
\begin{proof} 
The $\bB_\dR^\sigma$-vector space $W_{\dR,0}^\sigma$ is generated by a finite set of elements of $W_{\crys,0}^\sigma$ of the form $\varphi_E^i(f_\crys^\sigma)$ with $i\in\N$, and since the action of $G_K$ on $\bB^\sigma_\dR\otimes_{E\bB^\sigma_\crys}W_{\crys,0}^\sigma$ fixes $f_\dR=a\otimes f_\crys^\sigma$, stabilizes $W_{\crys,0}^\sigma$ and commutes with $\varphi_E$, one has
\begin{equation}\label{gphi} g.(\varphi_E^i(1\otimes f_\crys^\sigma))=\varphi_E^i(g.(1\otimes f_\crys^\sigma))=\varphi_E^i\left(\frac{a}{g.a}\otimes f_\crys^\sigma\right). 
\end{equation}
Again since $G_K$ stabilizes $W_{\crys,0}^\sigma$, $(g.a)/a$ must belong to $\Frac(E\bB^\sigma_\crys)$: indeed, $\bB^\sigma_\dR$ is $\Frac(E\bB^\sigma_\crys)$-flat (they are both fields) and $af_\cris^\sigma$, $(g.a)f_\cris^\sigma$ generate the same line in $\bB^\sigma_\dR\otimes_{E\bB_{\crys}^\sigma}W_{\crys,0}^\sigma$, hence they generate the same line in $\Frac(E\bB^\sigma_\crys)\otimes_{E\bB_{\crys}^\sigma}W_{\crys,0}^\sigma$. In particular we can apply $\varphi_E$ to $a/g.a$, and by rewriting the rightmost member of \eqref{gphi} we obtain
\begin{equation}\label{fcryseigen} g.(\varphi_E^i(1\otimes f_\crys^\sigma))=\varphi_E^i(g.(1\otimes f_\crys^\sigma))=\varphi_E^i\left(\frac{a}{g.a}\right)\otimes\varphi_E^i(f_\crys^\sigma), 
	\end{equation}
so that $\varphi_E^i(f_\crys^\sigma)$ generates a $G_K$-stable line in $\Frac(E\bB^\sigma_\crys)\otimes_{E\bB_{\crys}^\sigma}W_{\crys,0}^\sigma$. 
\end{proof}

\begin{lemma}
The element $a$ of Lemma \ref{exa} belongs to $\Frac(E\bB^\sigma_\crys)$.
\end{lemma}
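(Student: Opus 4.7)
The plan is to recast the statement as the triviality of a $G_K$-cocycle class. Set $L = \Frac(E\bB_\crys^\sigma)$, and define the continuous $1$-cocycle $\eta \colon G_K \to L^\times$ by $\eta(g) = g(a)/a$; the paragraph preceding the lemma established that $\eta$ indeed takes values in $L^\times$, and the cocycle identity $\eta(gh) = \eta(g) \cdot g(\eta(h))$ is a direct computation. If $\eta$ is a coboundary in $L^\times$, that is, if $\eta(g) = g(a^\prime)/a^\prime$ for some $a^\prime \in L^\times$, then $a/a^\prime$ is $G_K$-invariant and hence belongs to $(\bB_\dR^\sigma)^{G_K} = K \subseteq L$, forcing $a \in L$. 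The task thus reduces to showing that the class $[\eta] \in H^1_\cont(G_K, L^\times)$ vanishes.

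Two pieces of input feed into this reduction. First, $a^n \in E\bB_\crys^\sigma \subseteq L^\times$ implies $\eta^n(g) = g(a^n)/a^n$, so $\eta^n$ is the coboundary of $a^n$ in $L^\times$; this shows $[\eta]$ is $n$-torsion in $H^1_\cont(G_K, L^\times)$. Second, the element $a \in \bB_\dR^{\sigma, \times}$ trivializes $\eta$ after extension of scalars, so the image of $[\eta]$ in $H^1_\cont(G_K, \bB_\dR^{\sigma, \times})$ vanishes.

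To combine these, I would use Kummer theory together with the specific structure of the inclusion $L \subseteq \bB_\dR^\sigma$. After enlarging $K$ and $E$ to contain $\mu_n$ via the procedure of Section \ref{fieldext}, the $n$-torsion class $[\eta]$ lifts through the Kummer sequence $1 \to \mu_n \to L^\times \xrightarrow{n} L^\times \to 1$ to a class in $H^1_\cont(G_K, \mu_n) \cong K^\times/(K^\times)^n$. A purely cohomological appeal to the vanishing of $[\eta]$ in $\bB_\dR^{\sigma, \times}$ does not suffice on its own, since $\bB_\dR^\sigma \supset \ovl K$ trivializes every Kummer extension of $K$; what must be leveraged is the rigidity of the trivialization $a$, which is not an arbitrary $n$-th root of $a^n$ but one with prescribed $t$-adic valuation in $\bB_\dR = \bB_\dR^+[1/t]$ and prescribed reduction in the residue field $\C_p = \bB_\dR^+/t$.

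The main obstacle will be executing the last step cleanly. Concretely, I expect to compare $a$ with a canonical $L$-valued $n$-th root of $a^n$ coming from Kummer theory, using the DVR structure on $\bB_\dR^+$ and Sen-theoretic control of $\eta$ in the spirit of the proof of Proposition \ref{monodrk1}; the extra flexibility given by our freedom to further enlarge $K$ and $E$ via Section \ref{fieldext} (absorbing finitely many $n$-th roots in $L^\times$) should then force $a$ into $L$. Alternatively one may package $(a, \eta)$ as a rank-$1$ $B$-pair between $L$ and $\bB_\dR^\sigma$ and invoke the rank-$1$ classification together with Proposition \ref{monodrk1} to reduce to the crystalline case, where the claim becomes an explicit computation with Lubin--Tate periods $\pi^\sigma(t_\tau)$ as in the closing paragraphs of that proposition's proof.
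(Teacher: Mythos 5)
There is a genuine gap. Your opening reduction is fine: setting $\eta(g)=g(a)/a$, which the preceding paragraph shows takes values in $\Frac(E\bB^\sigma_\crys)^\times$, the lemma is indeed equivalent to $[\eta]=0$ in $H^1_\cont(G_K,\Frac(E\bB^\sigma_\crys)^\times)$, and $[\eta]$ is $n$-torsion because $a^n\in E\bB^\sigma_\crys$. But everything after that is either flawed or absent. The Kummer sequence $1\to\mu_n\to L^\times\xto{n}L^\times\to 1$ is not exact on the right, since $\Frac(E\bB^\sigma_\crys)^\times$ is not $n$-divisible, so the asserted lift of $[\eta]$ to $H^1_\cont(G_K,\mu_n)\cong K^\times/(K^\times)^n$ is not available as stated. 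More importantly, you yourself identify that torsion plus trivialization over $\bB_\dR^\sigma$ cannot suffice, and the step that would close the argument --- exploiting the ``rigidity'' of $a$ via its $t$-adic valuation, Sen theory, or Lubin--Tate periods --- is never executed; it is precisely the content of the lemma, deferred rather than proved. The alternative suggestion of packaging $(a,\eta)$ as a rank-$1$ $B$-pair is also not set up: $a$ is a single period, not a $\bB_{e,E}$-representation, and no bridge to Proposition \ref{monodrk1} is constructed.

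What your approach never uses, and what the paper's proof runs on, is the Frobenius. The element $a$ does not sit in isolation: $f_\dR=a\otimes f_\crys^\sigma$ lives inside the finite-dimensional $\Frac(E\bB^\sigma_\crys)$-vector space $\Frac(E\bB^\sigma_\crys)\otimes_{E\bB^\sigma_\crys}W_{\crys,0}^\sigma$, which is spanned by the $G_K$-eigenvectors $1\otimes\varphi_E^i f_\crys^\sigma$ whose eigencharacters are the twists $g\mapsto\varphi_E^i(a/g(a))$ of $\eta^{-1}$. Assuming $a\notin\Frac(E\bB^\sigma_\crys)$, finite-dimensionality forces a linear relation among these eigenvectors; distinctness of the twisted characters (which is exactly where the contradiction hypothesis enters) collapses the relation to $\varphi_E^h(1\otimes f_\crys^\sigma)=\alpha\otimes f_\crys^\sigma$ with $\alpha\in\Frac(E\bB^\sigma_\crys)$, and comparing cocycles then shows $a\cdot\varphi_E^{-h}(\alpha)$ is $G_K$-invariant, hence lies in $E$, giving $a\in\Frac(E\bB^\sigma_\crys)$ after all. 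If you want to salvage a cohomological formulation, you would still need to import this $\varphi_E$-eigenvector structure to pin down the trivializing period; without it the class $[\eta]$ cannot be shown to vanish.
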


\begin{proof}
Assume by contradiction that $a\notin\Frac(E\bB^\sigma_\crys)$. 
By Lemma \ref{crysbasis}, the $\Frac(E\bB^\sigma_\crys)$-representation $\Frac(E\bB^\sigma_\crys)\otimes_{E\bB^\sigma_\crys}W_{\crys,0}^\sigma$ admits a basis of $G_K$-eigenvectors of the form $\{1\otimes\varphi_E^if_\crys^\sigma\}_{i=0,\ldots,h-1}$, where $h$ is the rank of the representation. 
The same argument as in Lemma \ref{crysbasis} gives that $1\otimes\varphi_E^hf_\crys^\sigma$ is also a $G_K$-eigenvector. Write $\varphi_E^hf_\crys^\sigma$ as a $\Frac(E\bB^\sigma_\crys)$-linear combination $\sum_{i=0}^{h-1}\alpha_i(1\otimes\varphi_E^if_\crys^\sigma)$. 
The only way for $\varphi_E^hf_\crys^\sigma$ to be a $G_K$-eigenvector is if the $\Frac(E\bB^\sigma_\crys)^\times$-valued characters giving the action of $G_K$ on $\varphi_E^hf_\crys^\sigma$ and on each of the eigenvectors $\alpha_i(1\otimes\varphi_E^if_\crys^\sigma)$ with $\alpha_i\neq 0$ all coincide. 
By comparing them for $\varphi_E^hf_\crys^\sigma$ and $\alpha_0(1\otimes f_\crys^\sigma)$ via \eqref{fcryseigen} for $i=h$ and $i=0$ (necessarily $\alpha_0\neq 0$ because $\varphi_E$ is an automorphism), we obtain for every $g\in G_K$
\[ \varphi_E^{h}\left(\frac{a}{g.a}\right)=\frac{g.\alpha}{\alpha}. \]
Since $\alpha\in\Frac(E\bB^\sigma_\crys)$, we can write
\[ \frac{a}{g.a}=\frac{g.(\varphi_E^{-h}\alpha)}{\varphi_E^{-h}\alpha}, \]
from which we get that $a\cdot\varphi_E^{-h}\alpha$ is $G_K$-invariant. Therefore $a\cdot\varphi_E^{-h}\alpha$ is an element of $(\Frac(E\bB^\sigma_\crys))^{G_K}=E$, and from $\alpha\in\Frac(E\bB^\sigma_\crys)$ we get $a\in\Frac(E\bB^\sigma_\crys)$, a contradiction.
\end{proof}

Thanks to the lemma, we can replace the basis $\{1\otimes\varphi_E^if_\crys^\sigma\}_{i=0}^{h-1}$ of $\Frac(E\bB^\sigma_\crys)\otimes_{E\bB^\sigma_\crys}W_{\crys,0}^\sigma$ with the basis of $G_K$-fixed elements $\wtl f_i=a\otimes\varphi_E^i(f_\crys^\sigma)$, $i=0,\ldots,h-1$ (this gives in particular that $\Frac(E\bB^\sigma_\crys)\otimes_{E\bB^\sigma_\crys}W_{\crys,0}^\sigma$ is a trivial, hence de Rham, representation). Since the action of $\varphi_E$ commutes with that of $G_K$, it must send a $G_K$-invariant basis to another $G_K$-invariant basis, hence it must be described in the basis $\{\wtl f_i\}_{i=0}^{h-1}$ by a matrix $\Phi$ in $\GL_h(\Frac(E\bB^\sigma_\crys))^{G_K})=\GL_h(K)$. Because of our choice of basis such a matrix will only have as non-zero entries a sub-diagonal of $1$'s and the non-zero entries of the last column, but we do not need this description. Such a matrix will admit a non-zero eigenvector over a finite extension $K^\prime$ of $K$.  

Pick a finite Galois extension $L$ of $\Q_p$ containing $K^\prime$, and let $f_{L/E}$ be the inertia degree of $L/E$. 
Let $\varphi_{L}$ be the operator on 
\[ L\otimes_{E}(\Frac(E\bB^\sigma_\crys)\otimes_{E\bB^\sigma_\crys}W_{\crys,0}^\sigma) \] 
defined by $1\otimes\varphi_E^{f_{L/E}}$, and extend $L$-linearly the action of the subgroup $G_L$ of $G_K$ from $\Frac(E\bB^\sigma_\crys)\otimes_{E\bB^\sigma_\crys}W_{\crys,0}^\sigma$ to $L\otimes_{E}(\Frac(E\bB^\sigma_\crys)\otimes_{E\bB^\sigma_\crys}W_{\crys,0}^\sigma)$. 
Since the matrix $\Phi$ admits an eigenvector defined over $L$, there exists a $\varphi_{L}$-eigenvector 
\[ \wtl f\in L\otimes_{E}(\Frac(E\bB^\sigma_\crys)\otimes_{E\bB^\sigma_\crys}W_{\crys,0}^\sigma) \]
given by an $L$-linear combination of the elements $1\otimes \wtl f_i$, $i=0,\ldots,h-1$. Since the action of $G_L$ is $L$-linear and the elements $1\otimes \wtl f_i$ are $G_K$-invariant, the element $\wtl f$ is $G_L$-invariant. 

We extend both our base and coefficient fields $K$ and $E$ to $L$ via the procedure of Section \ref{fieldext}, in order for the eigenvector to be defined over $\Frac(E\bB^\sigma_\crys)$. 
We make all the replacements implicitly and we keep writing $K$, $E$ and $\sigma$ for the relevant objects. 
We can now assume that $\Frac(E\bB^\sigma_\crys)\otimes_{E\bB^\sigma_\crys}W_{\crys,0}^\sigma$ contains an eigenvector $\wtl f$ for $\varphi_E$ that is also $G_K$-invariant. 

By multiplying $\wtl f$ with a suitable $a_0^{-1}\in E\bB^\sigma_\crys$, we obtain an element $f_0=a_0^{-1}\wtl f$ of $W_{\crys,0}^\sigma$. Since $f_0$ generates a $\varphi_E$- and $G_K$-stable line in $\Frac(E\bB^\sigma_\crys)\otimes_{E\bB^\sigma_\crys}W_{\crys,0}^\sigma$, we can apply Lemma \ref{line} to the group actions of $\varphi_E^\Z$ and $G_K$ and deduce that $f_0$ is an eigenvector for both $\varphi_E$ and $G_K$. 

All the arguments we made starting with $f_\dR$ can be repeated starting with the element $\wtl f=a_0f_0$ instead. 
In particular, we can write $(a_0f_0)^{\otimes n}$ as
\begin{equation}\label{f0} (a_0f_0)^{\otimes n}=\sum_{i\in I_\dR}d_{0,i}\otimes F_{e,i}
\end{equation}
for some $d_{0,i}\in E\bB_{\crys}^\sigma$. 
By applying $\varphi_E$ to both sides of $\eqref{f0}$, 
we get
\begin{equation}\label{f0n} \varphi_E(a_0^n)\varphi_E(f_0)^{\otimes n}=\sum_{i\in I_\dR}\varphi_E(d_{0,i})\otimes \varphi_E(F_{e,i}). 
\end{equation}

Let $\beta\in E\bB_{\crys}^\sigma$ be the $\varphi_E$-eigenvalue of $f_0$. 
The operator $\varphi_E$ acts trivially on $F_{e,i}$ for every $i$ since $F_{e,i}\in\Sym^nW_{e,E}$. This way we deduce from \eqref{f0n} that 
\begin{equation}\label{betaf0n} \varphi_E(a_0^n)\beta^nf_0^{\otimes n}=\sum_{i\in I_\dR}\varphi_E(d_{0,i})\otimes F_{e,i}. 
\end{equation}
Since $\{1\otimes F_{e,i}\}_{i\in I_\dR}$ is a $\bB^\sigma_\dR$-basis of $W_{\dR,0}^{\sigma,\prime}$, comparing \eqref{f0n} and \eqref{betaf0n} we get
\[\varphi_E(d_{0,i})a_0^n=\varphi_E(a_0^n)\beta^nd_{0,i} \] 
for every $i$. In particular $\varphi_E(d_{0,i}d_{0,1}^{-1})=d_{0,i}d_{0,1}^{-1}\in\Frac(E\bB_\crys^\sigma))$ for every $i$. By Lemma \ref{BeEphi}, $d_{0,i}d_{0,1}^{-1}\in\Frac\bB_{e,E}$ for all $i$. Set $b_i=d_{0,i}d_{0,1}^{-1}$ for each $i$, and let $b_0$ be the product of the denominators of all the $b_i$ written as quotients of elements of $\bB_{e,E}$. Then by multiplying \eqref{f0} with $d_{0,1}^{-1}b_0$ we get
\begin{equation}\label{d01} d_{0,1}^{-1}b_0a_0^nf_0^{\otimes n}=\sum_{i\in I_\dR}b_ib_0\otimes F_{e,i}, 
\end{equation}
with $b_ib_0\in\bB_{e,E}$ for every $i$. 
Define an element $F$ of $\Sym^nW_{e}$ by
\[ F=\sum_{i\in I_\dR}b_ib_0F_{e,i}. \]
By \eqref{d01}, there exists $c\in\Frac(E\bB_\cris^\sigma)$ such that $f_0^{\otimes n}=c\otimes\pi^\sigma(F)$. By Lemma \ref{lemmafn}, $f_0$ has to be of the form $c_0\otimes\pi^\sigma(F_0)$ for some $c_0\in\Frac(E\bB_\cris^\sigma)$ and $F_0\in W_e$. Since $f_0^{\otimes n}$ generates a $G_K$-stable line in $\Frac(E\bB_\crys^\sigma)\otimes_{\bB_{e,E}}\Sym^nW_e$, Lemma \ref{linetens} implies that $f_0$ generates a $G_K$-stable line in $\Frac(E\bB_\crys^\sigma)\otimes_{\bB_{e,E}}W_e$, and the same is obviously true for its $\Frac(E\bB_\crys^\sigma)$-scalar multiple $\pi^\sigma(F_0)$. 

We apply Lemma \ref{line} to $R=\bB_{e,E}$, $M=W_e$, $L=\Frac(E\bB_\cris^\sigma)$, using the fact that $\bB_{e,E}$ is a principal ideal domain \cite[Proposition 1.1]{berdimtri}, and we conclude that $W_{e,E}$ contains a $G_K$-stable saturated line $V_e$. 
By setting $V_\dR^+=(\bB_{\dR,E}\otimes_{\bB_{e,E}}V_e)\cap W_\dR^+$, we obtain a saturated sub-$\BPKE$-pair $(V_e,V_\dR^+)$ of rank $1$ of $W$.



\medskip

\section{Lifting strict triangulations}\label{secsttri}

In this section we prove that, under some extra assumptions, the potential triangulability result of Theorem \ref{ptri} can be improved to a triangulability one. 
One of these assumptions is technical: if $\bS^\uu$ and $W$ are as in condition (1) of Theorem \ref{ptri}(i), then we require that $\bS^\uu(W)$ be \emph{strictly triangulable} in the sense of Definition \ref{strictdef} below. We do not know if this condition can be removed. The second assumption, on the other hand, is a necessary and sufficient condition for $W$ to be triangulable: the ordered parameter of a triangulation of $\bS^\uu(W)$ must ``lift'' to a candidate ordered parameter for $W$. 

We give a notion of \emph{strict split triangulinity} for $B$-pairs that is slightly different from that in \cite[Definition 6.3.1]{kedpotxia}. 
Let $K$ and $E$ be two $p$-adic fields, and let $n$ be a positive integer. 

\begin{defin}\label{KEpar}
A \emph{$(K,E)$-parameter (of length $n$)} is a set of $n$ of continuous characters $K^\times\to E^\times$, while an \emph{ordered $(K,E)$-parameter (of length $n$)} an ordered $n$-tuple of continuous characters $K^\times\to E^\times$. 
\end{defin}

We will denote ordered $(K,E)$-parameters by underlined, lower-case Greek letters. Now let $W$ be a $\BPKE$-pair of rank $n$ and $\udelta$ an ordered $(K,E)$-parameter of length $n$.

\begin{rem}\label{ordbor}
Let $T$ be any split maximal torus of $\GL_{n}$. The datum of a $(K,E)$-parameter of length $n$ is the same as that of a continuous homomorphism $K^\times\into T(E)$, while giving an ordered $(K,E)$-parameter amounts to specifying, in addition, a Borel subgroup of $\GL_{n/E}$ containing $T$: indeed, the orderings of the $n$ characters making up a homomorphism $K^\times\to T(E)$ are in bijection with the possible choices of sets of positive roots of $\GL_{n}$.
	\end{rem}

\begin{defin}\label{strictdef}
We say that $W$ is \emph{split triangulable of parameter $\udelta$} if there exists a triangulation of $W$ of ordered parameter $\udelta$. 

We say that a triangulation $\cW$ of $W$ is \emph{strict} if there are no other triangulations of $W$ with the same ordered parameter as $\cW$. We say that $W$ is \emph{strictly split triangulable of parameter $\udelta$} if there exists a strict triangulation of $W$ of ordered parameter $\udelta$.

We call an $E$-linear representation $V$ of $G_K$ \emph{(strictly) split trianguline of parameter $\udelta$} if the associated $\BPKE$-pair $W(V)$ is (strictly) split triangulable of parameter $\udelta$.
\end{defin}

In this section we will not deal with non-split triangulable $B$-pairs. Though one could extend the above definitions to the non-split case, we believe that the study of non-split parameters fits more naturally in the framework of $G$-$B$-pairs that we present in the next section.

\begin{rem}
If $W$ is strictly triangulable of parameter $\udelta$ in the sense of \cite[Definition 6.3.1]{kedpotxia}, then it is strictly split triangulable of ordered parameter $\udelta$ according to our definition. 
We do not know if the converse is true: inside of a triangulable $B$-pair $W$ of some parameter one may have distinct extensions by isomorphic rank $1$ $B$-pairs (as prescribed by the parameter) of some step of the triangulation, so that $W$ is not strictly triangulable, but only be able to complete one of these extensions to a triangulation.
\end{rem}

\begin{rem}
If $\cW=(W_i)_{1\le i\le n}$ is a strict triangulation of a $\BPKE$-pair $W$, then every quotient $W_i/W_j$, $j\leq i$, inherits from $\cW$ a triangulation that is necessarily strict: if it weren't, we would be able to build in an obvious way a triangulation of $W$ distinct from $\cW$ but of the same parameter as $\cW$.
\end{rem}

\begin{rem}
If $V$ is refined trianguline in the sense of \cite[Definition 6.4.1]{kedpotxia}, then $V$ admits a strict triangulation by \cite[Lemma 6.4.2]{kedpotxia}. 
Since our motivation for the result of this paper is the application to the $p$-adic deformation of Galois and automorphic representations, we prefer to work with strictly split trianguline representations rather than with the smaller class of refined ones. Refined triangulinity is too strong a property to work generically over a $p$-adic family of Galois representations: for instance, it implies potential semistability, hence is immediately lost when the Hodge--Tate--Sen weights of a representation are not integers.
\end{rem}

\subsection{Operations on parameters}

Let $V$ be an $n$-dimensional $E$-vector space, $T(V)$ a maximal split torus in $\GL(V)$ and $B(V)$ a Borel subgroup of $\GL(V)$ containing $T(V)$. By \emph{flag} in an $E$-vector space we will always mean a \emph{complete flag}. 
Let $\Fil^\bullet V$ be the flag on $V$ whose stabilizer is $B(V)$. Each $E$-representation $W$ of $\GL(V)$ is equipped with a $B(V)$-stable flag, and such a flag is unique if $W$ is irreducible. One can construct this flag in the natural way: If $W_1,W_2$ are two objects of $\Rep_E(\GL(V))$ equipped with flags $\Fil^\bullet W_1$ and $\Fil^\bullet W_2$, we define a flag on $W_1\otimes_EW_2$ in the natural way, by setting
\[ \Fil^n(W_1\otimes_EW_2)=\bigoplus_{i+j=n}\Fil^iW_1\otimes_E\Fil^jW_2 \]
for every $n\in\Z$. Since $V$ is a tensor generator of $\Rep_E(\GL(V))$, every object of this category can be written as a direct sum of subquotients of $V_{a,b}=V^{\otimes a}\otimes_E(V^{\vee})^{\otimes b}$ for some non-negative integers $a,b$. If $V_{a,b}$ is equipped with a complete $B(V)$-stable flag, every irreducible subquotient of $V_{a,b}$ inherits a unique $B(V)$-stable flag.

For an arbitrary tuple $\uu$, let $B(\bS^\uu(V))$ be the stabilizer of the unique $B(V)$-stable flag on the irreducible representation $\bS^\uu(V)$ of $\GL(V)$. Since such a flag is $B(V)$-stable, the morphism $\bS^\uu\colon\GL(V)\to\GL(\bS^\uu(V))$, restricted to $B(V)$, gives a morphism 
\[ B(V)\to B(\bS^\uu(V)) \]
that we still denote by $\bS^\uu$. 
By restricting this map to the maximal tori $T(V)$ and $T(\bS^\uu(V))$ contained in the two sides, we obtain a morphism
\begin{equation}\label{ST} T(V)\to T(\bS^\uu(V)) 
\end{equation}
that we still denote by $\bS^\uu$. 

\begin{rem}\label{remsem}
In the above construction, we can replace $E$ with an arbitrary ring and $V$ with a free $E$-module, letting $\Rep_E(\GL(V))$ be the category of $E$-linear representations of $\GL(V)$ on finite free $E$-modules. If $B(V)$ is the stabilizer of a flag in $V$, then we can construct a unique $B(V)$-stable flag in $\GL(\bS^\uu(V))$, and the associated morphisms $B(V)\to B(\bS^\uu(V))$ and $T(V)\to T(\bS^\uu(V))$. 
\end{rem}

The following result is probably standard, but we could not find a reference for it.

\begin{lemma}\label{borelpreim}
For every tuple $\uu$ with $\length(\uu)<\dim_EV$, the preimages of $B(\bS^\uu(V))$ and $T(\bS^\uu(V))$ under $\bS^\uu\colon\GL(V)\to\GL(\bS^\uu(V))$ are $B(V)$ and $T(V)$, respectively.
\end{lemma}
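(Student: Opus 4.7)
The plan is to attack the Borel statement via a Bruhat-decomposition reduction to the Weyl group, and to handle the torus statement separately by a centralizer argument. Write $G=\GL(V)$ and $W=N_G(T(V))/T(V)$, and let $P=(\bS^\uu)^{-1}(B(\bS^\uu(V)))$; by the very construction of the canonical flag on $\bS^\uu(V)$, this flag is $B(V)$-stable, so $P$ is a standard parabolic of $G$ containing $B(V)$. By the Bruhat decomposition, the equality $P=B(V)$ reduces to showing that, for every representative $\dot w\in N_G(T(V))$ of a non-trivial $w\in W$, the element $\bS^\uu(\dot w)$ fails to preserve the flag $\Fil^\bullet\bS^\uu(V)$.

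The key structural input is that the flag is refined by the $T(V)$-weight decomposition of $\bS^\uu(V)$: each $\Fil^i$ is $T(V)$-stable (being preserved by $B(V)\supseteq T(V)$), so the one-dimensional graded piece $\Fil^i/\Fil^{i-1}$ is a weight vector of some weight $\nu_i$, and the multiset $\{\nu_i\}_{i=1}^{\Dim\bS^\uu(V)}$ reproduces the full multiset of weights of $\bS^\uu(V)$. Since $\bS^\uu(\dot w)$ sends the $\mu$-weight space to the $(w\cdot\mu)$-weight space, if $\bS^\uu(\dot w)$ preserves the flag it induces an isomorphism on each $\Fil^i/\Fil^{i-1}$ between weights $\nu_i$ and $w\cdot\nu_i$, forcing $w\cdot\nu_i=\nu_i$. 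Consequently $w$ must fix every weight of $\bS^\uu(V)$, and in particular every element of the $S_n$-orbit of $\uu$. Here is where $\length(\uu)<n$ enters: extending $\uu$ by zeros yields a tuple in $\Z^n$ with both positive and zero entries, hence with at least two distinct values. A direct combinatorial argument then shows that only $w=1$ can fix this entire orbit pointwise: otherwise, one picks an index $i$ with $w(i)\neq i$ and constructs a permutation of $\uu$ with distinct values at positions $i$ and $w(i)$, contradicting the invariance equation.

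For the torus statement, take $g$ in the preimage of $T(\bS^\uu(V))$. Since $T(\bS^\uu(V))$ is commutative and contains $\bS^\uu(T(V))$, the element $\bS^\uu(g)$ commutes with $\bS^\uu(t)$ for every $t\in T(V)$, and thus $gtg^{-1}t^{-1}\in\ker\bS^\uu=\mu_{\ell(\uu)}$ by Remark \ref{schurcomp}(i). The resulting commutator morphism $T(V)\to\mu_{\ell(\uu)}$ goes from a connected algebraic group to a finite one and is therefore trivial; so $g$ centralizes $T(V)$, and $g\in Z_G(T(V))=T(V)$. The main technical hurdle is the flag/weight analysis in the Borel case: once one is convinced that a $B(V)$-stable complete flag on $\bS^\uu(V)$ refines the weight decomposition as above, everything else reduces to a combinatorial statement about the $S_n$-action on permutations of $\uu$ or to a formal centralizer argument.
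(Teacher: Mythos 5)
Your proof is correct. For the Borel statement you and the paper both reduce, via the Bruhat decomposition, to showing that no representative of a nontrivial Weyl element can land in $B(\bS^\uu(V))$, but you then diverge: the paper asserts that $\bS^\uu$ maps $\cW_V$ injectively into the Weyl group of $\GL(\bS^\uu(V))$ and carries Bruhat cells to Bruhat cells, and concludes from disjointness of cells; you instead prove the needed fact directly, showing via the weight decomposition that if $\bS^\uu(\dot w)$ preserved the canonical flag then $w$ would fix every weight of $\bS^\uu(V)$, in particular every element of the $S_n$-orbit of $(\uu,0,\dots,0)$, which is impossible for $w\neq 1$ once this tuple has two distinct entries. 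Your route has two advantages: it makes explicit where the hypothesis $\length(\uu)<\dim_EV$ enters (the paper's written proof never invokes it, although it is essential --- for $\uu=(k,\dots,k)$ of length $n$ the Schur functor is a power of the determinant and the preimage of the Borel is all of $\GL(V)$, so injectivity on Weyl groups fails there), and it sidesteps the mildly delicate claim that $\bS^\uu(\dot w)$ genuinely represents an element of the target Weyl group, which requires some care when the weight spaces of $\bS^\uu(V)$ have multiplicity greater than one. You also supply an argument for the torus statement --- the commutator map $T(V)\to\mu_{\ell(\uu)}$ is trivial by connectedness, whence $g\in Z_{\GL(V)}(T(V))=T(V)$ --- which the paper's proof omits entirely even though the lemma asserts it. All the individual steps check out (the graded pieces of a $B(V)$-stable complete flag do exhaust the weight multiset, the induced map on a one-dimensional graded piece does force $w\cdot\nu_i=\nu_i$, and the combinatorial step is sound), so this is a complete and in places more careful proof than the one in the paper.
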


\begin{proof}
Let $\cW_V$ (respectively $\cW_{\bS^\uu(V)}$) be the quotient of the normalizer of $T(V)$ (respectively $\bS^\uu(T(V))$) in $\GL(V)$ (respectively $\GL(\bS^\uu(V))$) by its centralizer. 
The morphism $\bS^\uu$ maps $\cW_V$ injectively to $\cW_{\bS^\uu(V)}$. In particular, for every $w\in\cW_V$, the Bruhat cell $B(V)wB(V)$ of $\cW_V$ is mapped to the Bruhat cell $B(\bS^\uu(V))\bS^\uu(w)B(\bS^\uu(V))$ of $\GL_{\bS^\uu(V)}$. Since Bruhat cells are disjoint, the preimage of the Bruhat cell $B(\bS^\uu(V))$ of $\GL_{\bS^\uu(V)}$ is $B(V)$.
\end{proof}

Now let $\udelta$ be an ordered $(K,E)$-parameter of length $n$ and let $T_n$ be a maximal split torus in $\GL_{n/E}$. By Remark \ref{ordbor}, the datum of $\udelta$ corresponds to that of a continuous character $K^\times\to T_n(E)$ and of a choice of a Borel subgroup $B_n$ of $\GL_n$ containing $T_n$. Let $m=\dim_E(\bS^\uu(E^n))$, and pick any basis $e_1,\ldots,e_m$ of $\bS^\uu(E^n)$ in order to attach to $\bS^\uu$ a morphism $\GL_n\to\GL_m$. Let $T_m$ and $B_m$ be the unique torus and Borel subgroup of $\GL_m$ containing $\bS^\uu(T_n)$ and $\bS^\uu(B_n)$, respectively. 

\begin{defin}
We denote by $\bS^\uu(\udelta)$ the ordered $(K,E)$-parameter defined, via Remark \ref{ordbor}, by the homomorphism $\bS^\uu\ccirc\udelta\colon K^\times\to T_m(E)$ and the choice of the Borel subgroup $B_m$ of $\GL_m$. 
\end{defin}

\begin{rem}\label{uetaell}
If $\udelta=(\delta_1,\ldots,\delta_n)$, then every character in $\bS^\uu(\udelta)$ is a monomial of degree $\ell(\uu)$ in the $\delta_i$. 
\end{rem}

Let $L$ be a finite extension of $K$ and denote by $\Nm_{L/K}\colon L^\times\to K^\times$ the norm map. 

\begin{defin}
For every $(K,E)$-parameter $\udelta=(\delta_1,\ldots,\delta_n)$, we define an $(L,E)$-parameter $\udelta_L=(\delta_{1,L},\ldots,\delta_{n,L})$ by setting
\[ \delta_{i,L}=\delta_i\ccirc\Nm_{L/K} \]
for every $i\in\{1,\ldots,n\}$.
\end{defin}

Fix local reciprocity maps $r_K\colon K^\times\to G_K^\ab$, $r_L\colon L^\times\to G_L^\ab$ making the diagram
\begin{center}
\begin{tikzcd}
L^\times\arrow{r}{r_L}\arrow{d}[swap]{\Nm_{L/K}} & G_L^\ab\arrow{d}{\iota_{L/K}} \\
K^\times\arrow{r}{r_K} & G_K^\ab
\end{tikzcd}
\end{center}
commute. 
When $\udelta$ can be extended to a homomorphism $\udelta^\Gal\colon G_K\to T(E)$ (where we identify $K^\times$ to a subgroup of $G_K$ via $r_K$), the restriction of $\delta^\Gal\vert_{G_L}\colon G_L\to T(E)$ to $L^\times$ (via $r_L$) coincides with $\udelta_L$. 

\begin{rem}\label{trirest}
For a character $\delta\colon K^\times\to E^\times$, we defined $\delta_L$ in such a way that the restriction of the $\BPKE$-pair $W(\delta)$ to $G_L$ is $W(\delta_L)$. If $W$ is a $\BPKE$-pair equipped with a triangulation of ordered parameter $\delta$, then the same triangulation is a triangulation of $W\vert_{G_L}$ of ordered parameter $\udelta_L$.
\end{rem}

\subsection{Lifting}

Under a strict split triangulinity assumption, we can improve Theorem \ref{ptri} by combining it with the following. 

\begin{thm}\label{thsttri}
Let $L$ a finite extension of $K$, and $\udelta$ and $\ueta$ be two ordered $(K,E)$-parameters of length $n$. 
Let $W$ be a $\BPKE$-pair of rank $n$, and let $\uu$ be a tuple satisfying $\length(\uu)<n$. .
\begin{enumerate}[label=(\roman*)]
\item If $W$ is triangulable of parameter $\udelta$, then $\bS^\uu(W)$ is triangulable of ordered parameter $\bS^\uu(\udelta)$. If in addition $\bS^\uu(W)$ is strictly split triangulable of ordered parameter $\bS^\uu(\udelta)$, then $W$ is strictly split triangulable of ordered parameter $\udelta$. 
\item If:
\begin{itemize}
\item $\length(\uu)<n$,
\item $\bS^\uu(W)$ is triangulable,
\item $L$ is a finite extension of $K$ such that $W$ is triangulable of ordered parameter $\ueta$, and
\item $\bS^\uu(W\vert_{G_L})$ is strictly triangulable of ordered parameter $\bS^\uu(\ueta)$,
\end{itemize}
then there exists a unique triangulation of  $W$ with the following property: the ordered parameter $\unu$ of $W$ satisfies $\bS^\uu(\unu_L)=\bS^\uu(\ueta)$. In particular, such a triangulation is strict. 
\end{enumerate}
\end{thm}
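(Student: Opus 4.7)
Proof plan. For part (i), I would first establish the initial assertion via direct functoriality of the Schur functor: a $G_K$-stable triangulation $(W^{(i)})$ of $W$ with parameter $\ueta$ yields, upon applying $\bS^\uu$ to the underlying $B(V)$-reduction of the $\GL(V)$-torsor of $W$, a $G_K$-stable triangulation of $\bS^\uu(W)$, whose induced parameter is $\bS^\uu(\ueta)$ via the torus morphism \eqref{ST}. For the strictness implication, the key input is Lemma \ref{borelpreim}: the preimage of $B(\bS^\uu V)$ under $\bS^\uu$ being $B(V)$ makes the induced map on flag varieties $\GL(V)/B(V)\to\GL(\bS^\uu V)/B(\bS^\uu V)$ a closed immersion. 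Two distinct $G_K$-stable triangulations of $W$ of parameter $\ueta$ would therefore have distinct Schur images, both of parameter $\bS^\uu(\ueta)$, contradicting the strictness hypothesis.

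For part (ii), my starting point is to apply part (i) to $W\vert_{G_L}$: combining the strictness of $\bS^\uu(W\vert_{G_L})$ at parameter $\bS^\uu(\umu)$ with the triangulability of $W\vert_{G_L}$ of parameter $\umu$, the latter is upgraded to a strict triangulation $\cW_L$. Its Schur image $\bS^\uu(\cW_L)$ is then the unique $G_L$-triangulation of $\bS^\uu(W\vert_{G_L})$ of parameter $\bS^\uu(\umu)$. What remains is to descend $\cW_L$ to a $G_K$-stable triangulation and to identify its parameter.

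The descent constitutes the heart of the argument. I plan to exploit the hypothesis that $\bS^\uu(W)$ is $G_K$-triangulable: let $\cV$ be a $G_K$-triangulation, which by Lemma \ref{nspar} (or a non-strict analogue thereof, after possibly enlarging $E$) has some parameter $\underline{\xi}\colon K^\times\to T'(E)$ valued in a (possibly non-split) torus $T'\subset\GL_N$ over $E$. The restriction $\cV\vert_{G_L}$ is a $G_L$-triangulation whose multiset of rank $1$ graded pieces coincides with that of $\bS^\uu(\cW_L)$, since this multiset is an invariant of the $\BPKE$-pair $\bS^\uu(W\vert_{G_L})$. Invoking an $L/K$-analogue of Lemma \ref{nspar} together with the uniqueness assertion of strictness at $\bS^\uu(\umu)$, I expect $\cV\vert_{G_L}$ and $\bS^\uu(\cW_L)$ to coincide, after adjusting $T'$ and the ordering to match $\bS^\uu(T_n)$ and $\bS^\uu(\umu)$. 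This forces $\bS^\uu(\cW_L)$ to be $G_K$-stable, and the closed-immersion property from Lemma \ref{borelpreim} then transfers $G_K$-stability back to $\cW_L$, giving a $G_K$-triangulation of $W$ of some parameter $\unu$ with $\bS^\uu(\unu_L)=\underline{\xi}_L=\bS^\uu(\umu)$ by construction. Strictness of this triangulation follows from part (i) applied to $W$ over $K$, with the required strictness of $\bS^\uu(W)$ at $\underline{\xi}$ coming by descent from strictness at $\bS^\uu(\umu)$.

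The principal obstacle will be the parameter-matching in the descent step. While the underlying multiset of characters of $\underline{\xi}_L$ necessarily equals that of $\bS^\uu(\umu)$, neither the ordering nor the enclosing torus $T'$ need a priori be compatible with $\bS^\uu(T_n)$ and the parameter $\bS^\uu(\umu)$; bridging this gap in a way compatible with the $\Gal(L/K)$-action permuting the characters will require a careful Galois-descent argument for base field extensions, parallel to the coefficient-field descent of Lemma \ref{nspar}.
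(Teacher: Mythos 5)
Your proposal follows essentially the same route as the paper: part (i) via functoriality of $\bS^\uu$ on flags together with the contrapositive (two distinct triangulations of $W$ of parameter $\ueta$ give two distinct triangulations of $\bS^\uu(W)$ of parameter $\bS^\uu(\ueta)$), and part (ii) by applying $\bS^\uu$ to the $G_L$-triangulation of $W$, identifying its image with the restriction to $G_L$ of a $G_K$-triangulation of $\bS^\uu(W)$ via the strictness hypothesis, descending $G_K$-stability through Lemma \ref{borelpreim}, and invoking part (i) for strictness and the parameter identity. The parameter-matching subtlety you flag in the descent step is genuine, and the paper's own proof passes over it silently, so you are if anything being more explicit than the source.
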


We clarify the meaning of point (ii). If we only assume that $\bS^\uu(W)$ is triangulable for some $\uu$ with $\length(\uu)<n$, then $W$ is potentially triangulable by Theorem \ref{ptri}(i). Take $L$ to be an extension of $K$ such that $W\vert_{G_L}$ is triangulable, and implicitly extend scalars to a finite extension of $E$ to assume that $W\vert_{G_L}$ is split triangulable. Let $\udelta$ be the ordered parameter of a triangulation of $W\vert_{G_L}$. Then part (i) implies that $\bS^\uu(W\vert_{G_L})$ admits a triangulation of ordered parameter $\bS^\uu(\udelta_L)$. The content of statement (ii) is that, if this triangulation is strict, then $W$, and not just its restriction to $G_L$, is strictly split triangulable. 

Note that the final equality $\bS^\uu(\unu_L)=\bS^\uu(\ueta)$ is equivalent to $\unu_L\ueta^{-1}$ taking values in the subgroup of $\ell(\uu)$-roots of unity of $\GL_n(E)$.

\begin{rem}\label{nostr?}
	We do not know whether the converse of the second statement in (i) holds: if the triangulation of $W$ of ordered parameter $\udelta$ is strict, is a triangulation of $\bS^\uu(W)$ of ordered parameter $\bS^\uu(\udelta)$ strict?
	\end{rem}

\begin{proof}[Proof of Theorem \ref{thsttri}] 
If $W$ admits a triangulation of ordered parameter $\udelta$, then by Remark \ref{remsem} $\bS^\uu(W)$ admits a triangulation of ordered parameter $\bS^\uu(\udelta)$.

As for the second statement of (i), if $W$ admits two distinct triangulation of ordered parameter $\udelta$ then the two resulting triangulations of $\bS^\uu(W)$ of ordered parameter $\bS^\uu(\udelta)$ will be distinct, hence $\bS^\uu(W)$ will not be strictly triangulable of this ordered parameter.
 
We now prove part (ii). 
Let $\uu$ be as in the statement. 
As in Remark \ref{remsem}, we write $\GL(W_e)$ for the group of $\bB_{e,E}$-linear automorphisms of $W_e$, and we use the analogous notation for $\GL(W_\dR^+)$. We let $\GL(W_e)$ and $\GL(W_\dR^+)$ act on $\GL(\bS^\uu W_e)$ and $\GL(\bS^\uu(W_\dR^+))$, respectively, via $\bS^\uu$. 

Let $\cW$ be a triangulation of $W\vert_{G_L}$; it consists of compatible triangulations (that is, complete flags) $\cW_e$ and $\cW_\dR^+$ of $W_e\vert_{G_L}$ and $W_\dR^+\vert_{G_L}$, respectively. By Remark \ref{remsem}, if $B(W_e)$ is the group of $\bB_{e,E}$-linear automorphisms of $W_e$ leaving $\cW_e$ stable, we can choose a unique complete $\bB_{e,E}$-flag $\cW_e^\prime$ in $\bS^\uu(W_e)$ that is stable under the action of $B(W_e)$. As before, we write $B(\bS^\uu(W_e))$ for the stabilizer of such a flag. 

Since the action of $G_K$ on $\bS^\uu(W_e)$ factors through its action on $W_e$, and $G_L$ leaves $\cW$ stable, the flag $\cW_e^\prime$ is a triangulation of $\bS^\uu(W_e\vert_{G_L})$, and by part (i) of this theorem we know that the ordered parameter of this triangulation is $\udelta_L$. By assumption $\bS^\uu(W_e)$ admits a ($G_K$-)triangulation $\cW^\second$ of ordered parameter $\udelta$, that is also a triangulation of $\bS^\uu(W_e)$ of ordered parameter $\udelta_L$ by Remark \ref{trirest}. Since $\bS^\uu(W_e\vert_{G_L})$ is strictly triangulable of ordered parameter $\udelta_L$ by hypothesis, we must have $\cW^\prime=\cW^\second$. This means that $\cW^\prime$ is a triangulation of $\bS^\uu(W_e)$, in other words that the action of $G_K$ on $\bS^\uu(W_e)$ factors through the stabilizer $B(\bS^\uu(W_e))$ of $\cW^\prime$, Lemma \ref{borelpreim} implies that the action of $G_K$ on $W_e$ factors through $B(W_e)$, that is, $\cW_e$ is a triangulation of $W_e$. By a completely analogous argument we obtain that $\cW_\dR^+$ is a triangulation of $W_\dR^+$, hence that $\cW$ is a triangulation of $W$. 

If $\unu$ is the ordered parameter of $\cW$, then by part (i) the ordered parameter of $\cW^\prime$ is $\bS^\uu(\unu)$. Since the ordered parameter of $\cW^\prime\vert_{G_L}$ is $\bS^\uu(\ueta)$, we deduce that
\begin{equation}\label{Sunu} \bS^\uu(\unu_L)=\bS^\uu(\ueta).
	\end{equation} 
The uniqueness statement follows from the fact that a different triangulation of $W$ of parameter $\unu$ satisfying \eqref{Sunu} would give rise to a new triangulation of $\bS^\uu(W\vert_{G_L})$ of parameter $\ueta$, contradicting the strictness of our original triangulation of $\bS^\uu(W\vert_{G_L})$.
\end{proof}

Berger and Di Matteo \cite[After Remark 5.6]{berdimtri} give an example of a $2$-dimensional, non-trianguline $\Q_p(\sqrt{-1})$-linear representation $V$ of $G_{\Q_p}$ such that $V\otimes_{\Q_p(\sqrt{-1})}V$, hence $\Sym^2V$, is trianguline. One can check that the triangulation of $\Sym^2V$ obtained in their example is strict, but its ordered parameter is not of the form $\Sym^2\udelta$ for a $2$-dimensional parameter $\udelta$. Therefore $V$ does not satisfy the assumptions of Theorem \ref{thsttri}.

\medskip

\section{Lifting $G$-trianguline representations along isogenies}\label{seclift}

We give a global application of our triangulability result, by proving an analogue of a classical result of Wintenberger about lifting geometric representations \cites[Théorème 1.1.3]{winteniso}[Théorème 2.2.2]{wintentann}. We replace the $p$-adic Hodge-theoretic conditions in his results (Hodge--Tate, de Rham, semistable, crystalline) with triangulinity. Our lifting condition for the parameter of a triangulation turns out to be the exact analogue of his lifting condition for the Hodge--Tate cocharacter.

Let $F$ be a number field, $E$ a $p$-adic field, and let $H$ be a quasisplit reductive group scheme over $E$. Pick a place $v$ of $F$. To our knowledge, there is no accepted definition of what it means for a continuous local Galois representation
\[ \rho_v\colon G_{F_v}\to H(E) \]
to be trianguline. We propose below a definition of strict triangulinity for such a $\rho_v$, modeled on the definitions in \cite{winteniso} of $\rho_v$ having various $p$-adic Hodge theoretic properties.

\subsection{$G$-trianguline representations and their parameters}

We rewrite Daruvar's definition of $G$-triangulable $(\varphi,\Gamma)$-modules \cite{daruvarth} in the context of $B$-pairs, though we only allow for our coefficients to be a field instead of an affinoid algebra as in \emph{loc. cit.}; this will be enough for our purpose. We also propose a simple extension of the definition to the case of quasisplit $G$. 
We warn the reader that we call \emph{split} $G$-triangulable $B$-pairs what Daruvar calls $G$-triangulable $B$-pairs.

Let $K$ and $E$ be two $p$-adic fields. 
Following \cite[Definition 2.2]{daruvarth}, we say that a functor $\calC\to\calD$ between two $E$-linear tensor categories is a \emph{fiber functor} if it is an $E$-linear, exact, faithful tensor functor. 

Let $G$ be a quasisplit reductive group over $E$. 
Let $(B,T)$ be a ``Borel pair" consisting of a maximal torus $T$ of $G$ and a Borel subgroup $B$ of $G$ containing $T$ (with both $T$ and $B$ defined over $E$). We denote by $\res^G_B$ the fiber functor $\Rep_E(G)\to\Rep_E(B)$ obtained by restricting representations of $G$ to $B$. 
The following definition is obtained by allowing $G$ to be quasisplit in \cite[Definition 4.9]{daruvarth}.

We denote by $\BPKEPairs$ the category of $\BPKE$-pairs, introduced in Section \ref{galois}.

\begin{defin}\label{deftridar}
	A \emph{$G$-$\BPKE$-pair} is a fiber functor
	\[ \Rep_E(G)\to\BPKEPairs. \]
	We say that a $G$-$\BPKE$-pair $W\colon\Rep_E(G)\to\BPKEPairs$ is:
	\begin{itemize}
		\item \emph{split triangulable} if there exists a fiber functor $W_B\colon \Rep_E(B)\to G$-$\BPKEPairs$ such that $W=W_B\ccirc\res^G_B$, in which case we call any such $W_B$ a \emph{triangulation} of $W$; 
		\item \emph{triangulable} if there exists a finite extension $F$ of $E$ such that $G\times_EF$-$\BPKF$-pair $W\otimes_EF$ is triangulable.
	\end{itemize}
We say that two triangulations $W_B$ and $W_B^\prime$ are equivalent if they can be obtained from one another by composition with an equivalence of categories $\Rep_E(B)\to\Rep_E(B)$. When we say that a triangulation with certain properties is unique, we always mean unique up to equivalence.
\end{defin}

To any $\BPKE$-pair $W$ of rank $n$, we attach the $\GL_{n/E}$-$\BPKE$-pair defined as the unique fiber functor $\Rep_E(\GL_{n/E})\to\BPKEPairs$ that maps the standard representation to $W$.

\begin{rem}\label{borelind}
	As is the case for \cite[Definition 4.9]{daruvarth}, Definition \ref{deftridar} is independent of the chosen Borel subgroup $B$ of $G$: since all Borel subgroups of $G$ are $G(E)$-conjugate to one another, their categories of $E$-representations are all equivalent.
\end{rem}

As usual, we will say that a $\GBPKE$-pair $W$ has property $\mathbf P$ if there exists a finite extension $K^\prime$ of $K$ such that the $G$-$B\vert_E^{\otimes K^\prime}$-pair $W\vert_{G_{K^\prime}}$ has property $\mathbf P$.

\begin{rem}\label{trigln}
	It follows from \cite[Example 3.11]{daruvarth} that Definition \ref{deftridar} is compatible with the definition of split triangulable $\BPKE$-pair: a $\BPKE$-pair $W$ of rank $n$ is split triangulable if and only if its associated $\GL_n$-$\BPKE$-pair $\wtl W$ is split triangulable. 
	
	More precisely, for every $i\in\{1,\ldots,n\}$ let $V_i$ be an $i$-dimensional representation of $B$ whose image is a Borel subgroup of $\GL(V_i)$; it is unique up to isomorphism. Then to every triangulation 
	\[ 0=W_0\subset W_1\subset\ldots\subset W_n=W \]
	of $W$, one can attach the unique triangulation $(B,W_B)$ of $\wtl W$ that maps $V_i$ to $W_i$. One checks easily that this defines a bijection between split triangulations of $W$ and $\wtl W$.
\end{rem}

To a continuous representation $\rho\colon G_K\to G(E)$ we can attach a $\GBPKE$-pair $W(\rho)$: it is the fiber functor $\Rep_E(G)\to\BPKEPairs$ that maps a representation $S\colon G\to\GL_n(E)$ to the $\BPKE$-pair associated with the $n$-dimensional representation $S\ccirc\rho\colon G_K\to\GL_n(E)$. 
We say that $\rho$ is (potentially, split, potentially split) trianguline if $W(\rho)$ is (potentially, split, potentially split) triangulable. For $G=\GL_n$, this notion agrees with the usual one of trianguline representation by Remark \ref{trigln}.

\subsection{Parameters of $G$-$B$-pairs}

We extend Daruvar's definition of parameter of a triangulation of a $\GBPKE$-pair to the case of quasisplit $G$. 
The next definition is inspired by Daruvar's notion of parameter of a $G$-$\BPKE$-pair. 
Let $G$ be a quasisplit reductive $E$-group, let $B$ be a Borel subgroup of $G$ and $T$ a maximal torus of $G$ contained in $B$.
	
\begin{defin}
	A \emph{$T$-parameter} is a fiber functor $\Rep_E(T)\to\BPKEPairs$. A \emph{$B$-parameter} is a fiber functor $\Rep_E(B)\to\BPKEPairs$ that factors through the restriction functor $\Rep_E(B)\to\Rep_E(T)$.
	\end{defin}

The distinction between $T$- and $B$-parameters is reminiscent of Remark \ref{ordbor}, with $B$-parameter being the analogue of \emph{ordered} $(K,E)$-parameters. This resemblance will be made into a precise relation after Definition \ref{Wpar}.

We denote $B$-parameters by non-underlined lowercase Greek letters in order to distinguish them from $(K,E)$-parameters, that we write as underlined lowercase Greek letters.

Let $W$ be a $\GBPKE$-pair and $W_B\colon\Rep_E(B)\to\BPKEPairs$ be a triangulation of $W$.

\begin{defin}[{cf. \cite[Definition 4.9]{daruvarth}}]\label{Wpar} 
	The \emph{$T$-parameter} of the triangulation $W_B$ is the fiber functor $\Rep_E(T)\to\BPKEPairs$ defined by pre-composing $W_B$ with the fiber functor $\Rep_E(T)\to\Rep_E(B)$ defined as pre-composition with the projection $B\onto T$. 
	
	The \emph{$B$-parameter} of $W_B$ is the fiber functor $\delta_{W_B}\colon\Rep_E(B)\to\BPKEPairs$ obtained by pre-composing the $T$-parameter of $W_B$ with the restriction functor $\Rep_E(B)\to\Rep_E(T)$.
	
	We say that $W_B$ is a \emph{strict triangulation} if it is the only triangulation of $W$ with $B$-parameter $\delta_{W_B}$.
\end{defin}


\subsubsection{From (ordered) $(K,E)$-parameters to ($B$-) $T$-parameters}

Let $n$ be a positive integer and $T$ a split $n$-dimensional torus over $E$. 
The datum of a $(K,E)$-parameter of length $n$ is equivalent to that of a continuous homomorphism $K^\times\to T(E)$. 
By specializing \cite[Example 3.13]{daruvarth} to the case when $X$ is a point, we obtain a bijection between the fiber functors $\Rep_E(T)\to\BPKEPairs$ and the continuous homomorphisms $T^\vee(K)\to E^\times$. Observe that
\begin{align*}
	\Hom_\cont(T^\vee(K),E)&=\Hom_\cont(X^\ast(T^\vee)\otimes_\Z K^\times,E)=\Hom_\cont(K^\times,X^\ast(T^\vee)\otimes_\Z E^\times)= \\
	&=\Hom_\cont(K^\times,\Hom_\Z(X^\ast(T),E^\times))=\Hom_\cont(K^\times,T(E)), 
	\end{align*}
so that elements of $\Hom_\cont(T^\vee(K),E)$ are in bijection with $(K,E)$-parameters. 
By composing the two bijections we obtain a bijection between $(K,E)$-parameters and $T$-parameters.

Now let $\udelta$ be an \emph{ordered} $(K,E)$-parameter. By Remark \ref{ordbor}, $\udelta$ is determined by its corresponding unordered $(K,E)$-parameter together with a choice of a Borel subgroup $B$ of $\GL_n$ containing $T$. 
To $\udelta$ we attach an ordered $B$-parameter $\delta\colon K^\times\to\Rep_E(B)\to\BPKEPairs$ as follows: we start with the $T$-parameter $\Rep_E(T^\vee)\to\BPKEPairs$ associated in the previous paragraph with the unordered $(K,E)$-parameter underlying $\udelta$, and we pre-compose it with the restriction functor associated with the embedding $T(E)\subset B(E)$. We obtain this way a bijection between ordered $(K,E)$-parameters and $B$-parameters.

When speaking of the ($B$-) or $T$-parameter \emph{associated} with a given (ordered) $(K,E)$-parameter, and vice versa, we refer to that given by the bijections we just defined.

\begin{rem}\label{triequiv}
Let $W$ be a $\BPKE$-pair of rank $n$, and let $\wtl W$ be the associated $\GL_n$-$\BPKE$-pair. 
The bijection of Remark \ref{trigln} maps triangulations of $W$ of ordered parameter $\udelta\colon K^\times\to T(E)$ to triangulations of $\wtl W$ of the associated $T$-parameter $\delta$. 
In particular $W$ is (strictly) split triangulable of ordered parameter $\udelta$ if and only if $\wtl W$ is (strictly) split triangulable of $B$-parameter $\delta$.
	\end{rem}

Now let $G$ and $H$ be two quasisplit reductive $E$-groups, $B_G$ a Borel subgroup of $G$ and $T_G$ a maximal torus inside of $B_G$, $S\colon G\to H$ a morphism, $T_H$ a torus of $H$ containing $S(T_G)$ and $B_G$ a Borel subgroup of $H$ containing $S(B_G)$. We keep writing $S$ for the functors $\Rep_E(B_H)\to\Rep_E(B_G)$ and $\Rep_E(T_H)\to\Rep_E(T_G)$ defined by pre-composition with $S$.

\begin{defin}
Given a fiber functor $F$ out of either $\Rep_E(G)$, $\Rep_E(B_G)$ or $\Rep_E(T_G)$, we write $S(F)$ for the functor out of $\Rep_E(H)$, $\Rep_E(B_H)$ or $\Rep_E(T_H)$, respectively, obtained by pre-composing $F$ with $S$. In particular
\begin{itemize}
\item for every $G$-$\BPKE$-pair $W\colon\Rep_E(G)\to\BPKEPairs$, we obtain a $H$-$\BPKE$-pair $S(W)$,
\item for every $T_G$-parameter $\delta\colon\Rep_E(T_G)\to\BPKEPairs$, we obtain a $T_H$-parameter $S(\delta)$,
\item for every $B_G$-parameter $\delta\colon\Rep_E(B_G)\to\BPKEPairs$, we obtain a $B_H$-parameter $S(\delta)$.
\end{itemize}
\end{defin}

\begin{rem}\label{SWtri}
Let $W$ be a $G$-$\BPKE$-pair and $\delta$ a ($T$-) $B$-parameter. If $W_B$ is a triangulation of $W$ of ($T$-) $B$-parameter $\delta$ then $S(W_B)$ is a triangulation of $S(W)$ of ($T$-) $B$-parameter $S(\delta)$.

As was the case for $\BPKE$-pairs (see Remark \ref{nostr?}), we do not know if $S(W_B)$ is a strict triangulation of $B$-parameter $S(\delta)$ whenever $W_B$ is a strict triangulation of $B$-parameter $\delta$.
\end{rem}

\begin{defin}
We say that a $G$-$\BPKE$-pair $W$ is \emph{quasi-regular} if there exists a faithful representation $S\colon G\to\GL_n$ such that the $\BPKE$-pair attached to the $\GL_n$-$\BPKE$-pair $S(W)$ is quasi-regular in the sense of Definition \ref{qreg}.	
\end{defin}

\subsubsection{From triangulable $B$-pairs to triangulable $G$-$B$-pairs}

The next proposition, combined with Remark \ref{triequiv}, relates the triangulability of a $G$-$\BPKE$-pair to the triangulability of a $\BPKE$-pair in the classical sense. In particular, it shows that Daruvar's definition of triangulable $G$-$\BPKE$-pair can be reformulated along the lines of Wintenberger's definitions of the $p$-adic Hodge theoretic properties of $G(E)$-valued representations \cite[Définition 1.1.1]{winteniso}.

\begin{prop}\label{Sstrict}\mbox{ }
	\begin{enumerate}
\item The $G$-$\BPKE$-pair $W$ is triangulable if and only if there exists a faithful $E$-representation $S\colon G\to\GL(V)$ such that the $\GL(V)$-$\BPKE$-pair $S(W)$ is triangulable. Moreover, for any triangulation $W_{B(V)}$ of $S(W)$, there exists a triangulation $W_B$ of $W$ such that $S(W_B)\cong W_{B(V)}$.
\item If $S\colon G\to\GL(V)$ is a faithful $E$-representation of $G$ such that $S(W)$ is strictly triangulable of some $B(V)$-parameter $\nu$, then there exists a unique triangulation $W_B$ of $W$, of some $B$-parameter $\mu$. such that $S(\mu)=\nu$. In particular, such a $W_B$ is strict.
\end{enumerate}
	\end{prop}

In proving Proposition \ref{Sstrict}, we will rely on the following lemma from category theory. As in \cite[\S 2.1]{brandbicat}, let $\cat_{\otimes/E}$ be the 2-category of essentially small $E$-linear tensor categories with $E$-linear tensor functors as morphisms. 
By \cite[Corollary 4.17]{brandbicat}, $\cat_{\otimes/E}$ has bicategorical pushouts. We compute such a pushout in the simple case of a diagram of neutral Tannakian categories.

\begin{lemma}\label{pushout}
	The pushout of the diagram 
	\begin{equation}\label{diagpush}
		\begin{tikzcd}
			\Rep_E(H)\arrow{r}{\alpha_1}\arrow{d}{\alpha_2}&\Rep_E(H_1) \\
			\Rep_E(H_2) & 
		\end{tikzcd}
	\end{equation}
	in $\cat_{\otimes/E}$ is isomorphic to $\Rep_E(H_1\times_HH_2)$.
\end{lemma}

\begin{proof}
	Let $\calP$ be the pushout of \eqref{diagpush}. We first observe that $\calP$ is a neutral Tannakian category. Let $F_1$ and $F_2$ be the forgetful fiber functors of $\Rep_E(H_1)$ and $\Rep_E(H_2)$, respectively; after composition with $\alpha_1$ and $\alpha_2$, respectively, they agree with the forgetful fiber functor on $\Rep_E(H)$, hence they factor through the functors $\Rep_E(H_1)\to\calP$ and $\Rep_E(H_2)\to\calP$ attached to $\calP$. The functor $\calP\to\Vect_E$ appearing in these factorizations is exact because $F_1$ is, and being a tensor functor it is also faithful. Therefore it is a fiber functor on $\calP$.
	
	Write $H_0$ for the fundamental group of $\calP$. The functors in the pushout diagram are induced, via Tannakian duality, by the morphisms in a commutative diagram of E-group schemes
	\begin{center}
		\begin{tikzcd}	
			H & H_1\arrow{l} \\
			H_2\arrow{u} & H_0\arrow{l}\arrow{u} 
		\end{tikzcd}
	\end{center}
	Since the diagram is commutative, the resulting morphism $H_0\to H$ must factor through the morphism $H_1\times_HH_2\to H$ attached to the fiber product. By Tannakian duality, such a factorization provides us with a functor $\beta\colon\Rep_E(H_1\times_HH_2)\to\Rep_E(H_0)$. 
	
	Now consider the commutative diagram of tensor categories
	\begin{center}
		\begin{tikzcd}
			\Rep_E(H)\arrow{r}{\alpha_1}\arrow{d}{\alpha_2}&\Rep_E(H_1)\arrow{d}{\iota_1} \\
			\Rep_E(H_2)\arrow{r}{\iota_2} & \Rep_E(H_1\times_HH_2)
		\end{tikzcd}
	\end{center}
	where $\iota_i$, $i=1,2$, is induced by the morphism $H_1\times_HH_2\to H_i$ attached to the fiber product. 
	By the universal property of  $\calP$ the functors $\iota_i$, $i=1,2$, factor through the functors $\Rep_E(H_i)\to\calP$ attached to the pushout. Such a factorization provides us with a functor $\gamma\colon\calP\to\Rep_E(H_1\times_HH_2)$. 
	
	The functor  $\beta\ccirc\gamma\colon\calP\to\calP$ is naturally isomorphic to the identity because of the universal property of $\calP$, hence induces via Tannakian duality an isomorphism $(\beta\ccirc\gamma)^\ast=\gamma^\ast\ccirc\beta^\ast\colon H_0\to H_0$. On the other hand, $(\gamma\ccirc\beta)^\ast\colon H_1\times_HH_2\to H_1\times_HH_2$ is an isomorphism because of the universal property of the fiber product. We conclude that $\beta^\ast$ and $\gamma^\ast$ are isomorphisms, hence that the categories $\calP$ and $\Rep_E(H_1\times_HH_2)$ are equivalent.
\end{proof}

\begin{proof}[Proof of Proposition \ref{Sstrict}]
We prove part (i).
Let $B$ be a Borel subgroup of $G$, and let $S\colon G\to\GL(V)$ be a faithful representation of $G$ as in the statement. 
The ``only if" is given by Remark \ref{SWtri}. 
Let $B(V)$ be a Borel subgroup of $\GL(V)$  and let $W_{B(V)}$ be a triangulation of $S(W)$, so that we have a diagram of $E$-linear tensor categories
\begin{equation}\label{diagtri}
	\begin{tikzcd}[column sep={3.5cm,between origins}]
		\Rep_E(\GL(V))\arrow{d}\arrow{r}\arrow[bend left=20]{rr}{S(W)} & \Rep_E(B(V))\arrow{d}\arrow[swap]{r}{W_{B(V)}} & \BPKEPairs \\		
		\Rep_E(G)\arrow{r}\arrow[bend right=33,swap]{urr}{W} & \Rep_E(B) & \\
	\end{tikzcd}
	\end{equation}
where all of the arrows in the square on the left are restriction functors. 
By Lemma \ref{pushout}, the pushout of the top left corner $\Rep_(B(V))\leftarrow\Rep_E(\GL(V))\to\Rep_E(G)$ in $\cat_{\otimes/E}$ is equivalent to $\Rep_E(S(B))$, since $B(V)\times_{\GL(V)}G\cong S(B)$.  Since $S$ is faithful, composition with $S$ induces an equivalence of categories $\Rep_E(S(B))\cong\Rep_E(B)$. In particular, the square on the left of \eqref{diagtri} is a pushout. Since the functors $W_{B(V)}$ and $W_G$ agree after composition with $\res_{\GL(V)}^{B(V)}$ and $\res_{\GL(V)}^G$, respectively, they must both factor through a (tensor) functor $W_B\colon\Rep_E(B)$, which gives a triangulation of $W$ satisfying $S(W_B)\cong W_{B(V)}$.

We prove part (ii). Let $S\colon G\to\GL(V)$ and $\nu$ be as in the statement, and let $W_{B(V)}$ be a triangulation of $S(W)$ of $B(V)$-parameter $\nu$. By part (i), there exists a triangulation $W_B$ of $W$ such that $S(W_B)\cong W_{B(V)}$. If $\mu$ is the $B$-parameter of $W_B$, then $S(\mu)=\nu$. As in the proof of part (i), $W_B$ factors through a functor $W_{B,0}\colon\Rep_E(B)\to\calP$, where $\calP$ is the pushout of $\Rep_E(B(V))\leftarrow\Rep_E(\GL(V))\to\Rep_E(G)$ in $\cat_{\otimes/E}$. Write again $H_0$ for the fundamental group of $\calP$, so that $W_{B,0}$ is induced by a morphism of $E$-group schemes $W_{B,0}^\ast\colon H_0\to B$. Now assume that a second triangulation $\wtl W_B$, of some $B$-parameter $\mu^\prime$ satisfying $S(\mu^\prime)=S(\mu)$. Let $\wtl W_{B,0}^\ast\colon H_0\to B$ be the morphism of $E$-group schemes attached to this second triangulation. By the strictness assumption, $S(W)$ admits a unique triangulation of $B(V)$-parameter $S(\mu)$, hence the triangulations $S(W_B)$ and $S(\wtl W_B)$ must be equivalent. This means that the morphisms $S\ccirc W_{B,0}^\ast$ and $S\ccirc\wtl W_{B,0}^\ast$ coincide. Since $S$ is faithful, this is only possible if $W_{B,0}^\ast\cong\wtl W_{B,0}^\ast$, which means that the triangulations $W_B$ and $\wtl W_B$ are equivalent, as desired.
\end{proof}

\subsection{Global lifting}

Let $H, H^\prime$ be two quasisplit connected reductive $E$-group schemes, and let $\pi\colon H^\prime\to H$ be a central isogeny over $E$. 
Given a continuous representation $\rho\colon G_F\to H(E)$ with some prescribed local properties, one can investigate whether there exists a representation $\rho^\prime\colon G_F\to H^\prime(E)$, with the same local properties, that ``lifts'' $\rho$, in the sense that $\pi\ccirc\rho^\prime\cong\rho$. 
When the required local properties are:
\begin{enumerate}
\item unramifiedness outside of a finite set of places containing the places above $p$;
\item a $p$-adic Hodge theoretic property at $p$, taken from the set \{Hodge Tate, de Rham, semistable, crystalline\};
\end{enumerate}
the lifting problem has been studied by Wintenberger \cite{winteniso,wintentann}, Hoang Duc \cite{hoangth}, and Conrad \cite{conradlift}. 
Furthermore, Hoang Duc and Conrad concern themselves the problem of minimizing the set of ramification primes of the lift. 

In this section we study the analogue of the problem described above when (ii) is replaced by the property that $\rho$ is strictly trianguline at $p$. For the existence and ramification locus of a lift we rely on the results of Conrad; our work comes in when trying to prove that the lift is trianguline at the right places. 

We introduce some terminology to be used in the statement of the next results. Given a Borel subgroup $B$ of a quasisplit reductive $E$-group $H$, a maximal torus $T$ of $H$ contained in $B$, and two $T$-parameters $\delta_1,\delta_2\colon\Rep_E(T)\to\BPKEPairs$, we define their product as follows: as in \cite[Example 3.13]{daruvarth}, one observes that fiber functors $\Rep_E(T)\to\BPKEPairs$ are in bijection with cocharacters $K^\times\to T^\vee(E)$, where $T^\vee$ is the dual torus of $T$. We define $\delta_1\delta_2$ as the fiber functor $\Rep_E(T)\to\BPKEPairs$ associated with the product of the two cocharacters associated with $\delta_1$ and $\delta_2$. If instead $\delta_1$ and $\delta_2$ are two $B$-parameters $\Rep_E(B)\to\BPKEPairs$, we define their product as the product of the corresponding $T$-parameters composed with the restriction functor $\Rep_E(B)\to\Rep_E(T)$.

Let $H$ and $H^\prime$ be two quasisplit connected reductive groups over $E$, and let $\pi\colon H^\prime\to H$ be a central isogeny. We denote by $Z$ the kernel of $\pi$ and by $q$ the exponent of $Z$. As usual, we denote by $\mu_q$ the $E$-group of $q$-th roots of unity. 
Let $\rho\colon G_{F_v}\to H(E)$ be a continuous representation, and write $\Sigma_1$ for the set of places of $F$ that are either archimedean or ramified for $\rho$, and $\Sigma_2$ for an arbitrary subset of the set of $p$-adic places of $L$. 
By combining \cite[Theorem 5.5]{conradlift} with Theorems \ref{ptri} and \ref{thsttri} we obtain the following.

\begin{thm}\label{liftisogtri}
Assume that:
\begin{enumerate}[label=(\roman*)]
\item $(F,\varnothing,q)$ is not in the \emph{special case (for the Grunwald--Wang theorem)} described in \cite[Definition A.1]{conradlift};
\item $\Sigma_1$ is finite;
\item for every $v\in\Sigma_1$, the representation $\rho\vert_{G_{F_v}}$ admits a lift $\rho_v^\prime\colon G_{F_v}\to H^\prime(E)$;
\item for every $v\in\Sigma_2$, there exist:
\begin{enumerate}[label=(\arabic*)]
\item a Borel subgroup $B_v$ of $H$ and a maximal torus $T_v$ contained in $B_v$,
\item a Borel subgroup $B_v^\prime$ of $H^\prime$ and a maximal torus $T_v^\prime$ contained in $B_v^\prime$, such that $\pi(B_v^\prime)\subset B_v$ and $\pi(T_v^\prime)\subset T_v$,
\item a $B_v$-parameter $\delta_v\colon\Rep_E(B_v)\to\BPKEPairs$ such that the representation $\rho\vert_{G_{F_v}}$ is quasi-regular and strictly trianguline of $B_v$-parameter $\delta_v$, 
\item a $B_v^\prime$-parameter $\delta_v^\prime\colon\Rep_E(B_v^\prime)\to\BPKEPairs$ such that $\pi(\delta_v^\prime)=\delta_v$. 
\end{enumerate}
\end{enumerate}
Then there exists a representation $\rho^\prime\colon G_K\to H^\prime(E)$ that satisfies $\pi\ccirc\rho^\prime\cong\rho$ and is unramified outside of a finite set of places, 
and any such lift is trianguline at the places in $\Sigma_2$. The $B_v^\prime$-parameter $(\delta_v^\second)_{v\in \Sigma_2}$ of a triangulation of $\rho^\prime$ at a place $v\in\Sigma_2$ can be chosen in such a way that, for every $v\in\Sigma_2$, $(\delta_v^\second)^{-1}\delta_v^\prime\colon\Rep_E(B_v^\prime)\to\BPKEPairs$ factors through $\Rep_E(\mu_{q})$. 
\end{thm}

\begin{proof}
The existence of a lift $\rho^\prime$ and the statements about its ramification follow from \cite[Theorem 5.5]{conradlift}. We prove the statement about triangulinity, for which we can restrict to the split strictly trianguline case. 
 
Let $v\in\Sigma_2$ and let $\rho_v$ and $\rho^\prime_v$ be the restrictions of $\rho$ and $\rho^\prime$, respectively, to a decomposition group at $v$. 
Let $\uu$ be a tuple satisfying $\ell(\uu)=q$. 
Let $S^\prime\colon H^\prime\to\GL_n$ be a faithful $E$-representation of $H^\prime$ and let $N=\dim_E\bS^\uu(E^n)$. Denote by $B_n$ the unique Borel subgroup of $\GL_n$ containing $S^\prime(B_v^\prime)$. We show that $S^\prime\ccirc\rho_v^\prime$ is strictly trianguline of $B_n$-parameter $S^\prime\ccirc\delta^\prime$. 
Since the kernel $Z$ of $\pi$ is central of exponent $q$, $Z$ is mapped under $S^\prime$ into the group $\mu_q$ of $q$-roots of unity embedded diagonally in $\GL_n$, and then to the trivial group by $\bS^\uu$. In particular $\bS^\uu\ccirc S^\prime$ factors as $S\ccirc\pi$ for a representation $S\colon H\to\GL_{N}$. Composition with $\rho_v^\prime$ gives
\begin{equation}\label{Srhov}
\bS^\uu\ccirc S^\prime\ccirc\rho_v^\prime\cong S\ccirc\pi\ccirc\rho_v^\prime\cong S\ccirc\rho_v. 
\end{equation}
Let $B_N$ be the unique Borel subgroup of $\GL_N$ containing $S(B_v)$. By assumption $\rho_v$ is strictly trianguline of $B_v$-parameter $\delta_v$, hence $S\ccirc\rho_v$ is trianguline of $B_N$-parameter $S(\delta_v)$ by Remark \ref{SWtri}. From the equivalence \eqref{Srhov} together with Theorems \ref{ptri}(i) and \ref{thsttri}(i) we deduce that $\bS^\uu\ccirc S^\prime\ccirc\rho_v^\prime$ is trianguline of $B_n$-parameter $S(\delta_v)$. Now $\delta_v=\pi\ccirc\delta_v^\prime$ for the $B_v^\prime$-parameter $\delta_v^\prime$ provided by condition (iv-4) of the statement, so the $B_N$-parameter of $\bS^\uu\ccirc S^\prime\ccirc\rho_v^\prime$ is $S(\pi\ccirc\delta_v^\prime)$, that coincides with $\bS^\uu\ccirc S^\prime\ccirc\delta_v^\prime$ by definition of $S$. Thanks to Theorem \ref{thsttri}(ii), we conclude that the representation $S^\prime\ccirc\rho_v^\prime$ is trianguline of a $B_n$-parameter $\delta_v^{S^\prime}$ such that  
\[ \bS^\uu((\delta_v^{S^\prime})^{-1}\cdot S^\prime(\delta_v^\prime)) \] 
is trivial. Since $\mu_q$ is the kernel of $\bS^\uu$, we deduce that 
\[ (\delta_v^{S^\prime})^{-1}\cdot S^\prime(\delta_v^\prime) \] 
factors through $\Rep_E(\mu_q)$.

Since $S^\prime$ is faithful, Proposition \ref{Sstrict}(i) implies that the triangulation of $S^\prime\ccirc\rho_v^\prime$ of $B_n$-parameter $\delta_v^{S^\prime}$ is induced by a triangulation of $\rho_v^\prime$, of some $B_v^\prime$-parameter $\delta_v^\second$ that necessarily satisfies $S^\prime(\delta_v^\second)=\delta_v^{S^\prime}$. By combining this equality with the last sentence of the previous paragraph, we obtain that the $B_n$-parameter
\[ S^\prime(\delta_v^\second)\cdot(S^\prime(\delta_v^\prime)=S^\prime((\delta_v^\second)^{-1}\delta_v^\prime) \]
factors through $\Rep_E(\mu_q)$. Since $S^\prime$ is faithful, we conclude that $(\delta_v^\second)^{-1}\delta_v^\prime$ also factors through $\Rep_E(\mu_q)$.
\end{proof}

As pointed out in Remark \ref{SWtri}, we cannot conclude that $\rho^\prime$ is \emph{strictly} trianguline at the places in $\Sigma_2$.

We give a corollary of Theorem \ref{liftisogtri}, where we relax the condition of $H^\prime\to H$ being a central isogeny, to simply having finite central kernel. 

As before, let $H$ and $H^\prime$ be two quasisplit connected reductive groups over $E$, and this time let $S\colon H^\prime\to H$ be a morphism with finite central kernel. One could take for instance as $S$ any representation $\GL_n\to\GL_m$ that is not a power of the determinant. 
We denote by $Z$ the kernel of $S$ and by $q$ the exponent of $Z$.
Let $\rho\colon G_{F_v}\to H(E)$ be a continuous representation. Let $\Sigma_1$ be the set of places of $F$ that are either archimedean or ramified for $\rho$, and $\Sigma_2$ be a subset of the set of $p$-adic places of $L$. 

\begin{cor}\label{liftisogcor}
Assume that:
\begin{enumerate}[label=(\roman*)]
	\item $(F,\varnothing,q)$ is not in the \emph{special case (for the Grunwald--Wang theorem)} described in \cite[Definition A.1]{conradlift};
	\item $\Sigma_1$ is finite;
	\item for every $v\in\Sigma_1$, the representation $\rho\vert_{G_{F_v}}$ admits a lift $\rho_v^\prime\colon G_{F_v}\to H^\prime(E)$;
	\item for every $v\in\Sigma_2$, there exist:
	\begin{enumerate}[label=(\arabic*)]
		\item a Borel subgroup $B_v$ of $H$ and a maximal torus $T_v$ contained in $B_v$,
		\item a Borel subgroup $B_v^\prime$ of $H^\prime$ and a maximal torus $T_v^\prime$ contained in $B_v^\prime$, such that $S(B_v^\prime)\subset B_v$ and $S(T_v^\prime)\subset T_v$,
		\item a $B_v$-parameter $\delta_v\colon\Rep_E(B_v)\to\BPKEPairs$ such that the representation $\rho\vert_{G_{F_v}}$ is quasi-regular and strictly trianguline of $B_v$-parameter $\delta_v$, 
		\item a $B_v^\prime$-parameter $\delta_v^\prime\colon\Rep_E(B_v^\prime)\to\BPKEPairs$ such that $S(\delta_v^\prime)=\delta_v$. 
	\end{enumerate}
\end{enumerate}
Then there exists a representation $\rho^\prime\colon G_K\to H^\prime(E)$ that satisfies $S\ccirc\rho^\prime\cong\rho$ and is unramified outside of a finite set of places, 
and any such lift is trianguline at the places in $\Sigma_2$. The $B_v^\prime$-parameter $(\delta_v^\second)_{v\in \Sigma_2}$ of a triangulation of $\rho^\prime$ at a place $v\in\Sigma_2$ can be chosen in such a way that, for every $v\in\Sigma_2$, $(\delta_v^\second)^{-1}\delta_v^\prime\colon\Rep_E(B_v^\prime)\to\BPKEPairs$ factors through $\Rep_E(\mu_{q})$.
\end{cor}


\begin{proof}
Factor $S$ as the composition of a central isogeny $\pi\colon H^\prime\to S(H^\prime)$ and the closed embedding $\iota\colon S(H^\prime)\into H$. 
If the representation $\wtl\rho\colon G_K\to S(H^\prime)$ obtained by restricting the target of $\rho$ satisfies the assumptions (i)--(iv) of Theorem \ref{liftisogtri}, then we obtain the thesis. The only non-trivial condition to be checked is that $\wtl\rho$ is strictly trianguline at the places in $\Sigma_2$, of parameters that can be lifted to $H^\prime$. 

For every $v\in\Sigma_2$, the $B_v$-parameter $\delta_v$ admits a lift
\[ \delta_v^\prime\colon\Rep_E(B_v^\prime)\to\BPKEPairs \] 
to a $B_v^\prime$-parameter, hence a lift $\pi(\delta_v^\prime)$ to a $\pi(B_v^\prime)$-parameter. 

Since the embedding $\iota\colon S(H^\prime)\into H$ is a faithful representation of $S(H^\prime)$, and $\iota\ccirc\wtl\rho_v=\rho_v$ is trianguline for every $v\in\Sigma_2$, with $B_v$-parameter $\iota\ccirc S(\delta_v^\prime)=\delta_v$, Proposition \ref{Sstrict}(i) implies that $\wtl\rho_v$ is trianguline for every $v\in\Sigma_2$ with $\pi(B_v^\prime)$-parameter $\pi(\delta_v^\prime)$.
 
Finally, the parameters $\pi(\delta_v^\prime)$ admit the lifts $\delta_v^\prime$ to $H^\prime$, hence all the conditions are fulfilled.
\end{proof}

\medskip

\section{Appendix: algebraic lemmas}

We prove here a few simple lemmas that are used in Section \ref{proof0}. 
We denote by $R$ an integral domain equipped with an action of a group $G$, and by $V$ an $R$-semilinear representation of $G$, that is, a finite free $R$-module equipped with a semilinear action of $G$. 

\begin{lemma}\label{lemmafg}
	Let $f, g$ be two elements of $V$. If there exists $a\in\Frac(R)$ such that $f^{\otimes n}=ag^{\otimes n}$, as elements of $V^{\otimes n}$, then there exists an $n$-th root $b$ of $a$ in $\Frac(R)$ such that $f=bg$.
\end{lemma}

\begin{proof}
	Let $\{v_1,\ldots,v_d\}$ be an $R$-basis of $V$, and write $f=\sum_{i\in \{1,\ldots,d\}}f_iv_i, g=\sum_{i\in \{1,\ldots,d\}}g_iv_i$ for some $f_i,g_i\in R$. Up to replacing $V$ by the $R$-span of a subset of $\{v_1,\ldots,v_d\}$, we can assume that for every $i$ at least one between $f_i$ and $g_i$ is non-zero. For $\ui=\{i_1,\ldots,i_n\}\in\{1,\ldots,d\}^n$, we write $f_\ui=\prod_{j=1,\ldots,n}f_{i_j}$, and similarly for $g_\ui$. By plugging these expansions into the equality $f^{\otimes n}=ag^{\otimes n}$, we find that $f_\ui=ag_\ui$ for every $\ui\in\{1,\ldots,d\}^n$. In particular $f_i^n=ag_i^n$ for every $i$, so that all of the $f_i$ and $g_i$ have to be non-zero. By comparing the equalities $f_\ui=ag_\ui$ for two choices of $\ui$ that differ only at one entry, we find that $f_i/f_j=g_i/g_j$ for every $i,j\in\{1,\ldots,d\}$. This implies that $f=bg$ with $b=f_1/g_1\in\Frac(R)$. A trivial computation gives that $b^n=a$.
\end{proof}

\begin{lemma}\label{lemmafn}
	Let $S$ an integral domain containing $R$, and let $f$ be an element of $V\otimes_RS$. If the tensor $f^{\otimes n}$ in $(V\otimes_RS)^{\otimes n}$ is of the form $w\otimes t$ for some $w\in V^{\otimes n}$ and $t\in S$, then $f$ is of the form $v\otimes s$ for some $v\in V$ and $s\in S$.
\end{lemma}

\begin{proof}
	Let $v_1,\ldots,v_d$ be an $R$-basis of $V$. Write $f$ as a sum $\sum_{i\in \{1,\ldots,d\}}v_i\otimes s_i$ for some $s_i\in S$. Up to replacing $V$ with the linear span of the vectors $v_i$ such that $s_i\neq 0$, we can assume that $s_i\neq 0$ for every $i$. We obtain $f^{\otimes n}=\sum_{\ui\in\{1,\ldots,d\}^n}v_\ui\otimes s_\ui$, we denote by $v_\ui$ the tensor product of the $v_i$ with the indices determined by the $n$-tuple $\ui$ and by $s_\ui$ the analogous product taken inside of $S$. 
	
	By assumption $f^{\otimes n}=w\otimes t$ for some $w\in V^{\otimes n}$ and $t\in S$. Writing $w=\sum_{\ui\in\in \{1,\ldots,d\}^n}r_\ui v_\ui$ for some $t_i\in S$ and comparing this with the expression we had for $f^{\otimes n}$, we obtain that $s_\ui=r_\ui t$ for every $\ui\in\{1,\ldots,d\}^n$. Note that $s_\ui\neq 0$ for every $\ui$ because $s_i\neq 0$ for every $i$. Comparing the last equality for two $n$-tuples that only differ at a single entry, we obtain $s_i/s_1\in\Frac R$ for every $i\in\{1,\ldots,d\}$. Write $s_i=r_{i}s_1$ for all $i$ and some $r_i\in\Frac R$. Let $I$ be the fractional ideal consisting of the $r\in R$ such that $r\sum_ir_iv_i\in V$, where we are considering $\sum_ir_iv_i$ as an element of $\Frac(R)\otimes_RV$. Since $R$ is a principal ideal domain, $I$ is of the form $yR$ for some $y\in\Frac(R)$. Write $f=(\sum_iyr_iv_i)\otimes y^{-1}s_1$. Since $f\in V\otimes_RS$, we must have $y^{-1}s\in S$, hence $f$ is of the desired form.
\end{proof}

We assume from now on that $R$ is a principal ideal domain. Recall that an \emph{$R$-line} $L$ in a finite free $R$-module is a rank 1 submodule, that we call $L$ \emph{saturated} if it is not contained in any other line, and that an \emph{eigenvector} in a semilinear $R$-representation $M$ of $G$ is an element of a $G$-stable $R$-line in $M$.

Let $F$ be a field on which $G$ acts and $h\colon R\to F$ a $G_K$-equivariant injection of rings. We set $V_F=F\otimes_RV$ and equip it with the diagonal action of $G$.

\begin{lemma}\label{line}
	If there exists $f\in V$ such that $1\otimes f$ is an eigenvector in $V_F$, then $f$ is an eigenvector in $V$. 
\end{lemma}

\begin{proof}
	Because of our assumption, for every $g\in G$ there exists $\gamma_g\in F$ such that $g.(1\otimes f=\gamma_g(1\otimes f)$. Since $V$ is a $G$-stable $R$-submodule of $V_F$, we must have $\gamma_g\in\Frac(R)$ for every $g\in G$, where we consider $\Frac(R)$ as a subfield of $F$ via $h$. Hence $1\otimes f\in\Frac(R)\otimes_RV$ generates a $G$-stable $\Frac(R)$-line, and it is enough to prove the statement when $R=\Frac(R)$.
	
	Let $I$ be the largest fractional ideal of $F=\Frac(R)$ satisfying $I(1\otimes f)\subset V$, where we consider $V$ as an $R$-submodule of $V_F$ via $v\mapsto 1\otimes v$. Since $R$ is a principal ideal domain, $I$ is of the form $bR$ for some $b\in F$. We claim that $bf$ generates a $G_K$-stable saturated $R$-line in $V$. Indeed, it is saturated by construction, and for every $g\in G$, $g.(bf)=g.b\cdot g.f=g.b\cdot\gamma_gf=(g.b\cdot\gamma_g\cdot b^{-1})(bf)$, where the coefficient of $bf$ must belong to $R$ by our choice of $b$.
\end{proof}

We equip $V^{\otimes n}$ with the action of $G$ induced by that on $V$. Recall that $R$ is assumed to be a principal ideal domain.

\begin{lemma}\label{linetens}
	If $f$ is an element of $V$, then $f$ is an eigenvector in $V$ if and only if $f^{\otimes n}$ is an eigenvector in $V^{\otimes n}$. 
\end{lemma}

\begin{proof}
	The ``only if'' is obvious. We prove the other implication. Let $g\in G$ and write $g.f^{\otimes n}=af^{\otimes n}$ for some $a\in\Frac(R)$. Since $g.f^{\otimes n}=(g.f)^{\otimes n}$, Lemma \ref{lemmafg} gives that $g.f=bf$ for some $b\in\Frac(R)$. Therefore $f$ generates a $G$-stable $\Frac(R)$-line in $V\otimes_R\Frac(R)$, and by Lemma \ref{line}, it belongs to a $G$-stable $R$-line in $V$.
\end{proof}

\bigskip

\printbibliography

\bigskip

\bigskip

{\setlength{\parindent}{0em} \small{\scshape Andrea Conti

\smallskip

Université du Luxembourg}}

\smallskip

\noindent \textit{E-mail address}: \url{andrea.conti@uni.lu}, \url{contiand@gmail.com}

\end{document}